\newtheorem{theorem}{Theorem}[section]
\newtheorem{lemma}[theorem]{Lemma}
\newtheorem{prop}[theorem]{Proposition}
\newtheorem{cor}[theorem]{Corollary}
\newtheorem{scholium}[theorem]{Scholium}
\theoremstyle{definition}
\newtheorem{definition}[theorem]{Definition}
\theoremstyle{remark}
\numberwithin{equation}{section}
\newcommand{\lcr}{\raisebox{-5pt}{\mbox{}\hspace{1pt}
                 \includegraphics{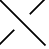}\hspace{1pt}\mbox{}}}
\newcommand{\ift}{\raisebox{-5pt}{\mbox{}\hspace{1pt}
                 \includegraphics{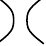}\hspace{1pt}\mbox{}}}
\newcommand{\zer}{\raisebox{-5pt}{\mbox{}\hspace{1pt}
                 \includegraphics{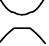}\hspace{1pt}\mbox{}}}
\title{The Structure of the Kauffman Bracket Skein Algebra at Roots of Unity }
\author{Charles Frohman }
\address{ Department of Mathematics, The University of Iowa}
\email{charles-frohman@uiowa.edu}
\author{Joanna Kania-Bartoszynska}
\address{ Division of Mathematical Sciences, The National Science Foundation}
\email{jkaniaba@nsf.gov}
\thanks{This material is based upon work supported by and while serving at the National Science Foundation. Any opinion, findings, and conclusions or recommendations expressed in this material are those of the authors and do not necessarily reflect the views of the National Science Foundation. This work was conducted while  JKB was on sabbatical leave at the Department of Mathematics of the University of Iowa. She thanks the department for their hospitality.}
\begin{document}
\begin{abstract} 
This paper is focused on the structure of the Kauffman bracket skein algebra of a punctured surface at roots of unity.  A criterion that determines when a collection of skeins forms a basis of the  skein algebra as an extension over the $SL(2,{\mathbb C})$ characters of the fundamental group of the surface, with appropriate localization is given. This is used to prove that when the algebra is localized so that every nonzero element of the center has a multiplicative inverse, that it is a division algebra.  Finally, it is proved that the localized skein algebra can be split over its center as a tensor product of two commutative subalgebras.
\end{abstract}
\maketitle
\section{Introduction}

In this paper we study the structure of the Kauffman bracket skein algebra of an oriented punctured surface at a $2N$-th  root of unity where $N$ is odd. These algebras play a crucial role in constructions of quantum invariants of $3$-manifolds and in investigating quantum hyperbolic geometry.

Given an orientable surface $F$ its Kauffman bracket skein algebra $K(F)$ is formed by taking linear combinations of framed links in a cylinder over the surface, $F\times I$, with complex coefficients, and modding out by the relations that define the Kauffman bracket link invariant. The multiplication comes from stacking one link above the other, with the up and down direction given by the interval $I$. The Kauffman bracket skein relation involves a complex parameter $A$.  We assume  that $A$ is a $2N$-th root of unity for odd $N$, and use the notation $K_N(F)$ to indicate the level. It was shown by Bullock \cite{B}, and Przytycki-Sikora \cite{PS} that when $N=1$, so that $A=-1$, this algebra is isomorphic to the character ring of the $SL(2,{\mathbb C})$ representations of the fundamental group of the surface. Bonahon and Wong  \cite{BW1} constructed an embedding of $K_1(F)$  into the center of $K_N(F)$ via the so-called threading map,  showing that the algebra $K_N(F)$ is a central extension of the $SL(2,{\mathbb C})$-characters of $F$.  Abdiel and Frohman \cite{AF2} proved that  this is a finite extension. They localized the algebra over the non-zero characters of $F$ and showed that the localized algebra is finite dimensional over the function field of the $SL(2,{\mathbb C})$-character variety of $\pi_1(F)$.

In this paper we work with  surfaces of negative Euler characteristic that have at least one puncture. The Kauffman bracket skein algebra has no zero divisors, and is finite rank over its center. The center of $K_N(F)$ is the coordinate ring of an algebraic variety that is a finite sheeted branched cover of the character variety of the surface. In algebraic geometry, localization corresponds to restricting to an open subset of the underlying space. The localizations in this paper come from restricting to Zariski open subsets of the variety related to the center of $K_N(F)$. We define a local basis of the Kauffman bracket skein algebra, as a module over some subring of the center, as a collection of linearly independent skeins that becomes a basis after inverting an element of the subring of the center. We give a criterion for when a collection of skeins is a local basis. An immediate consequence of our criterion is that $K_N(F)$ localized so that every nonzero character is invertible is a vector space of dimension $N^{-3e(F)}$, where $e(F)$ is the Euler characteristic of $F$, and the base field is the function field of the character variety.

The primary application of our criterion for detecting a local basis is the construction of a splitting of a localization of the  skein algebra as a tensor product of two commutative subalgebras over its center. This splitting is generated by  two pants decompositions of the surface $F$, i.e. collections of disjoint simple closed curves that cut the surface into a collection of pairs of pants (thrice punctured spheres).  Unless the parameter $A$ is $\pm 1$, the Kauffman bracket skein algebra of a surface of genus greater than $1$ is non-commutative. The intersections between the two collections of curves yielding the two pants decompositions involved in the splitting completely determines the non-commutativity of $K_N(F)$.

 The paper is organized as follows. We start in Section \ref{review} by recalling the definitions and known facts about the Kauffman bracket skein algebras, including the relationship between skeins and admissible colorings of a triangulation of the surface. In Section \ref{basis} we formulate and prove a criterion that characterizes when a collection of skeins forms a basis of an appropriately localized Kauffman bracket skein algebra over an extension of the localized characters, and compute the dimension of such extensions.
 
In order to state the main results we need some terminology.  We denote the image of the threading map  in $K_N(F)$ by $\chi(F)$, to remind the reader that it is isomorphic to the ring of $SL(2,\mathbb{C})$-characters of the fundamental group of $F$.  An admissible coloring of the edges of an ideal triangulation $E$ of a surface $F$ is an $E$-tuple of nonnegative integers that satisfy parity conditions and triangle inequalities.  The admissible colorings are in one to one correspondence with the isotopy classes of simple diagrams on the surface. The {\bf residue} of a coloring is the tuple of remainders of its values under division by $N$ in $\mathbb{Z}_N^E$.  Every skein is a linear combination of simple diagrams. The simple diagrams are linearly ordered by a choice of an ordering of the edges $E$.  Every skein has a {\bf lead term}, involving the largest simple diagram. The {\bf residue of the lead term} of a skein is the residue of the admissible coloring underlying its lead term. We say that a collection of skeins $\mathcal{B}$ is a local basis for $K_N(F)$ if there is $c\in \chi(F)$ such that if $\chi(F)_c\leq K_N(F)_c$ are the localizations corresponding to inverting the powers of $c$, then $K_N(F)_c$ is a free module over $\chi(F)_c$ with basis given by the images of the skeins in $\mathcal{B}$.

 \vspace{.1in}
{\bf {Scholium [Exhaustion] \ref{exhaustion}:}}{\it{ If $\mathcal{B}$ is any collection of skeins such that the residues of the lead terms of the elements of $\mathcal{B}$ are in one to one correspondence with the elements of $\mathcal{Z}_N^E$, then $\mathcal{B}$ is a local basis for $K_N(F)$. }}

\vspace{.1in}

\vspace{.1in}
{\bf Corollary \ref{dimKoverchi} }: {\it{The dimension of $S^{-1}K_N(F)$ as a vector space over $S^{-1}\chi(F)$ is $N^{-3e(F)}$, where $e(F)$ is the Euler characteristic of surface $F$.}}
 \vspace{.1in}
 
  \vspace{.1in}
{\bf{Corollary \ref{division}:}} {\it{The Kauffman bracket skein algebra $K_N(F)$, localized so that every nonzero character is invertible, is a division algebra.}}
 \vspace{.1in}
 
  In Section \ref{pants} we construct a decomposition of the localized Kauffman bracket skein algebra of a punctured surface as a tensor product over the localized center of two commutative subalgebras generated by specifically chosen pants decompositions of the surface.
  
   \vspace{.1in}
  {\bf{Theorem \ref{pantssplit}}}:
{\it{ Let $F$ be an orientable surface of genus $g$ with $p$ punctures and negative Euler characteristic, where $p\geq 1$.  There exist two pants decompositions $P$ and $Q$ of $F$ and a skein $c\in Z(K_N(F)$ such that the associated skein modules 
 ${\mathcal P}$ and ${\mathcal Q}$ localized over $S=\{c^k\}$ give a splitting of the localized skein algebra of $F$ over its center.
 \begin{equation}
K_N(F)_c  = \mathcal{P}_c\otimes_{Z(K_N(F))_c}\mathcal{Q}_c.
 \end{equation}
 The intertwiner is given by $p\otimes q \rightarrow p*q$.}}
  \vspace{.1in}

The inspirations for this theorem are \cite{AF1}, where it is proved for the skein algebra of a surface of genus one with one puncture, a paper of Natanzon, and Felikson \cite{NF}, where they produce local coordinates for Teichm\"{u}ller space using pairs of pants decompositions, and a lecture of  Bonahon where he discussed representations of the skein algebra.

\section{ A review of the Kauffman bracket skein algebra at roots of unity}\label{review}
In this section we review the definitions and known results about the Kauffman bracket skein algebra that we need in subsequent sections.

 If $M$ is an oriented three manifold, let $\mathcal{L}(M)$ be the set of all isotopy classes of oriented framed links in $M$, including the empty link. Recalling that a framed link is an embedding of  a disjoint union of annuli in $M$, the orientation of a component corresponds to choosing a preferred side to the annulus. In diagrams we draw  immersed  arcs or circles with overcrossing data. You should think of part of the annulus in the diagram to be a band in the page parallel to the arc, with the preferred side up.

Recall the Kauffman bracket skein relations, 

\begin{equation}\label{KBSR}
\lcr-A\zer-A^{-1}\ift \end{equation}
and
\begin{equation*}\bigcirc \cup L+(A^2+A^{-2})L,\end{equation*} 
 where the framed links in 
each expression are
identical outside the balls pictured in the diagrams. Let $N$ be an odd natural number and let $A=e^{\pi{\bf i}/N}$.  The {\bf Kauffman bracket skein module}, $K_N(M)$, is the quotient of the vector space with basis $\mathcal{L}(M)$ and complex coefficients by the subvector space spanned by all Kauffman bracket skein relations.

For any  oriented surface $F$, the skein module  $K_N(F\times [0,1])$ is an algebra under stacking. More precisely, product of two links is defined by placing one link above the other in the direction given by the interval $[0,1]$. The product descends distributively to a product on the skein module. We denote the product of skeins $\alpha$, $\beta$ by $\alpha *\beta$. Often, we will know apriori that the skeins being multiplied commute, for instance if $J_1$ and $J_2$ are disjoint simple closed curves.  If this is the case we dispense with the star, and denote the product by juxtaposition, $J_1J_2$.

It can be the case that $F$ and $F'$ are not homeomorphic, but  $F\times [0,1]$ is homeomorphic to $F'\times [0,1]$. Nevertheless the algebras coming from stacking in $K_N(F\times [0,1])$ and $K_N(F'\times [0,1])$ are not isomorphic. To emphasize that the algebra  comes from the product structure we denote the algebra $K_N(F)$. When $N=1$ the algebra $K_1(F)$ is commutative, and can be canonically identified with the coordinate ring of the $SL_2\mathbb{C}$-character variety of the fundamental group of $F$.

\begin{theorem}[\cite{B,PS}]\label{Dougs}
Given an orientable surface $F$, let $X(\pi_1(F))$ denote the ring of $SL_2{\mathbb{C}}$-characters of $\pi_1(F)$. The map
 \begin{equation} \theta:K_{1}(F)\rightarrow X(\pi_1(F))\end{equation} that takes each knot $K$ to $-tr(\gamma)$ where $\gamma\in \pi_1(M)$ is a loop corresponding to $K$ is an isomorphism.

\end{theorem}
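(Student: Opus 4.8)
The plan is to prove that $\theta$ is a well-defined algebra homomorphism, that it is surjective, and --- the substantive point --- that it has trivial kernel.

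\emph{$\theta$ is a well-defined algebra homomorphism.} Up to isotopy a framed link in $F\times[0,1]$ is a disjoint union of framed loops, each of which determines a conjugacy class in $\pi_1(F)$ and hence a conjugation-invariant regular function on $\mathrm{Hom}(\pi_1(F),SL_2\mathbb{C})$, i.e. an element of $X(\pi_1(F))$. I would first define $\theta$ on the free module generated by $\mathcal{L}(F\times[0,1])$ by sending a link whose components carry classes $\gamma_1,\dots,\gamma_m$ to $(-1)^m\,\mathrm{tr}(\gamma_1)\cdots\mathrm{tr}(\gamma_m)$; this is manifestly multiplicative under stacking and sends a knot $K$ to $-\mathrm{tr}(\gamma)$. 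It then remains to check the two defining relations at $A=-1$. The relation $\bigcirc\cup L=-(A^2+A^{-2})L$ is immediate since a contractible loop goes to $-\mathrm{tr}(I)=-2=-(A^2+A^{-2})|_{A=-1}$. For the crossing relation, which at $A=-1$ says that a crossing equals minus the sum of its two smoothings, one resolves a single crossing: in the two-component case the smoothings merge loops in classes $a,b$ into single loops in classes $ab$ and $ab^{-1}$, and after canceling the common sign and the traces of the untouched components the relation to be verified is precisely the fundamental $SL_2$ trace identity $\mathrm{tr}(ab)+\mathrm{tr}(ab^{-1})=\mathrm{tr}(a)\,\mathrm{tr}(b)$; the self-crossing case reduces to the same identity, one term becoming the product of traces of the two arcs into which the loop splits. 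Hence $\theta$ descends to $K_1(F)$.

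\emph{Surjectivity.} By classical invariant theory $X(\pi_1(F))=\mathbb{C}[\mathrm{Hom}(\pi_1(F),SL_2\mathbb{C})]^{SL_2\mathbb{C}}$ is generated as a $\mathbb{C}$-algebra by the trace functions $\rho\mapsto\mathrm{tr}(\rho(\gamma))$, $\gamma\in\pi_1(F)$ --- for a punctured surface $\pi_1(F)$ is free and the Fricke--Vogt and Procesi description provides generators indexed by single generators, pairs, and triples of generators. Each such $\gamma$ is realized by a framed loop (a simple closed curve, or a band sum of finitely many), so it is $\pm\theta$ of a skein; thus $\theta$ is onto.

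\emph{Injectivity.} This is the main obstacle. By repeatedly resolving crossings with the skein relation and deleting contractible components with the loop relation, $K_1(F)$ is spanned by the isotopy classes of essential multicurves on $F$. It therefore suffices to show that the images under $\theta$ of distinct essential multicurves --- each a signed product $\pm\prod_i\mathrm{tr}(\gamma_i)$ of trace functions --- are linearly independent in $X(\pi_1(F))$: this forces $\theta$ to carry a spanning set to a linearly independent set, hence to be injective (and incidentally shows the multicurves form a basis and that $K_1(F)$ is reduced). To establish this linear independence I would evaluate on suitably chosen families of representations --- for example along a ray of discrete faithful characters, where the leading exponential asymptotics of $\prod_i\mathrm{tr}(\rho_t(\gamma_i))$ are governed by the total geodesic length of the multicurve and distinct multicurves have distinct total lengths for a generic hyperbolic structure --- or, following Przytycki and Sikora, argue by induction using the filtration of $K_1(F)$ by geometric intersection number. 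I expect this step to be the genuine difficulty: one must exclude every possible cancellation among products of traces indexed by non-isotopic systems of simple closed curves, which is exactly the point where the original proofs require real work.
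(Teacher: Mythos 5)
This statement is quoted by the paper as a known result of Bullock and Przytycki--Sikora; the paper offers no proof of its own, so there is nothing internal to compare you against. Your outline does reproduce the standard architecture of the proof in the literature: define $\theta$ on the free module on framed links by $(-1)^m\prod_i\mathrm{tr}(\gamma_i)$, check the loop relation against $-\mathrm{tr}(I)=-2$ and the crossing relation against the Cayley--Hamilton trace identity $\mathrm{tr}(ab)+\mathrm{tr}(ab^{-1})=\mathrm{tr}(a)\mathrm{tr}(b)$ (noting also that at $A=-1$ the framing-change factor $-A^{\pm3}$ is $1$, so the map is insensitive to framing), and get surjectivity from the Fricke--Vogt/Procesi generation of the character ring by trace functions. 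All of that is correct and is essentially Bullock's half of the theorem.

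The genuine gap is exactly where you place it: injectivity. Bullock's argument shows only that the kernel of $\theta$ is the nilradical of $K_{1}(F)$; proving that this nilradical vanishes --- equivalently, that the trace functions $\pm\prod_i\mathrm{tr}(\gamma_i)$ attached to distinct isotopy classes of multicurves are linearly independent in $X(\pi_1(F))$ --- is the entire content of the Przytycki--Sikora paper, and your proposal does not actually carry it out. Neither of the two strategies you gesture at is completed: the asymptotic-length argument along a ray of characters requires showing that a single hyperbolic structure (or path of structures) can be chosen so that the finitely many multicurves in a putative relation have pairwise distinct total lengths \emph{and} that the subleading terms cannot conspire to cancel, and the filtration-by-intersection-number argument is precisely the nontrivial induction of the original paper. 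So the proposal should be read as a correct reduction of the theorem to the no-nilpotents statement, not as a proof of it; since the paper itself simply cites \cite{B,PS} for the whole statement, your write-up is at the same level of completeness as the paper only if that final linear-independence step is likewise treated as a citation rather than as something you have established.
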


A {\bf simple diagram} on a surface $F$ is a system of disjoint simple closed curves, none of which bounds a disk in $F$.  A simple diagram is {\bf primitive} if no two curves in the diagram cobound an annulus in $F$. Corresponding to a simple diagram is a framed link obtained by choosing a system of disjoint annuli in $F\times \{0\}$ that form a regular neighborhood of the diagram, oriented so that the preferred side is upwards. This is called the {\bf blackboard framing} of the simple diagram.  If two simple diagrams are isotopic, so are their corresponding framed links.  The Kauffman bracket skein relations (\ref{KBSR}) allow us to resolve all crossings and get rid of any simple closed curves that bound  disks in $F$. These observations yield the following:

\begin{prop}\label{simplediag}The set of framed links coming from isotopy classes of simple diagrams forms a basis for $K_N(F)$ as a vector space over $\mathbb{C}$.
\end{prop}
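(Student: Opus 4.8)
The plan is to establish two things: that the framed links coming from isotopy classes of simple diagrams span $K_N(F)$ over $\mathbb{C}$, and that they are linearly independent. For spanning, I would start with an arbitrary framed link $L$ in $F\times[0,1]$. After a small isotopy we may assume $L$ projects to a generic immersed diagram on $F$ with finitely many transverse double points, carrying the blackboard framing. I would then resolve each crossing using the first skein relation (\ref{KBSR}), which rewrites $L$ as an $\mathbb{C}[A,A^{-1}]$-linear (hence, at $A=e^{\pi\mathbf{i}/N}$, $\mathbb{C}$-linear) combination of diagrams with strictly fewer crossings; induction on the number of crossings reduces everything to disjoint unions of embedded simple closed curves. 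Finally, the second skein relation lets me delete any component that bounds a disk in $F$, at the cost of a factor $-(A^2+A^{-2})$. What remains is a linear combination of blackboard-framed simple diagrams, so these span.

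For linear independence I would invoke the standard fact, already implicit in the paper's setup, that $K_1(F)$ over $\mathbb{Z}[A,A^{-1}]$ (or over $\mathbb{C}$ with $A$ a formal variable / indeterminate) has the simple diagrams as a free basis — this is the original Przytycki result for the skein module of a surface times an interval, proven by building a diagrammatic normal form and checking that the skein relations are consistent with it. Since $K_N(F)$ is obtained by specializing $A$ to a root of unity, one must be slightly careful: specialization of a module with a free basis keeps that basis free, provided the module is genuinely defined by base change from $\mathbb{Z}[A^{\pm 1}]$, which it is (the relations (\ref{KBSR}) have coefficients in $\mathbb{Z}[A^{\pm 1}]$). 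So the simple diagrams remain linearly independent over $\mathbb{C}$ at $A=e^{\pi\mathbf{i}/N}$.

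The main obstacle — or at least the only place requiring genuine care rather than routine bookkeeping — is the linear independence half, specifically justifying that no collapse occurs upon specializing $A$ to a root of unity. The cleanest route is to cite Przytycki's theorem that the skein module of $F\times I$ over $\mathbb{Z}[A^{\pm 1}]$ is free on the simple diagrams, and then observe that $K_N(F) = K(F\times I)\otimes_{\mathbb{Z}[A^{\pm 1}]}\mathbb{C}$ where $\mathbb{C}$ is viewed as a $\mathbb{Z}[A^{\pm 1}]$-algebra via $A\mapsto e^{\pi\mathbf{i}/N}$; tensoring a free module preserves its basis. I would present the spanning argument in full detail (it is short and self-contained) and handle independence by this base-change citation, noting that the condition that curves not bound disks and the passage to the blackboard framing are exactly what make the correspondence between simple diagrams and basis elements well-defined and injective on isotopy classes.
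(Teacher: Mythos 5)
Your proposal is correct and follows essentially the same route as the paper, which simply observes that the skein relations let one resolve crossings and delete trivial circles (spanning) and relies on the standard freeness of the skein module of $F\times I$ on simple diagrams (independence). Your extra care about why the basis survives specialization of $A$ to a root of unity, via base change from $\mathbb{Z}[A^{\pm 1}]$, is a legitimate and welcome refinement of the same argument.
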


Recall that if $\alpha$ and $\beta$ are properly embedded $1$-manifolds in a surface $F$, and at least one of $\alpha$ or $\beta$ is compact, their {\bf geometric intersection number} $i(\alpha,\beta)$  is the minimum number of points in $\alpha'\cap \beta'$ over all properly embedded $1$-manifolds $\alpha'$ and $\beta'$ that are isotopic to $\alpha$ and $\beta$ via a compactly supported isotopy. The geometric intersection number can always be realized by transverse representatives of the isotopy classes of $\alpha$ and $\beta$. Also if $\alpha$ and $\beta$ are transverse, a {\bf bigon} is a disk $D\subset F$ such that the boundary of $D$ is the union of two arcs, $a\subset \alpha$ and $b\subset \beta$. A pair of transverse representatives of the isotopy classes of $\alpha$ and $\beta$ realize $i(\alpha,\beta)$ if and only if they have no bigons.

A surface $F$ has {\bf finite type} if it is the result of removing finitely many points from a closed oriented surface. An {\bf ideal triangle} is the result of removing three points from the boundary of a disk.  The three open intervals that are the complement of the three points removed from the boundary are the {\bf sides} of the ideal triangle. An {\bf ideal triangulation} of a surface $F$ is given by a collection $\Delta_i$ of ideal triangles whose sides have been identified in pairs to obtain a quotient space $X$, along with a homeomorphism $h:X\rightarrow F$. For each triangle there is an inclusion map $\Delta_i\rightarrow F$. We say that the triangle $\Delta_i$ is {\bf folded} if the inclusion map is not an embedding. If this is the case, then there are two sides of $\Delta_i$ that are identified to each other.  The images of the sides of the $\Delta_i$ in $F$ are called the {\bf edges of the triangulation}. For a surface of finite type to admit an ideal triangulation its Euler characteristic $e$ must be negative. In this  case there are $-2e$ triangles  and $-3e$ edges in the triangulation. 

A function $f:E\rightarrow \mathbb{Z}_{\geq 0}$ is {\bf admissible} if:
\begin{itemize} 
\item When $a,b,c$ are three distinct edges that are the sides of an ideal triangle, then $f(a)+f(b)+f(c)$ is even, and the integers $f(a),f(b),f(c)$  satisfy all triangle inequalities;
\item When $a,b$ are the edges of a folded triangle, where $a$ is the image of two sides then  $f(b)$ is even, and $2f(a)\geq f(b)$. 
\end{itemize}

\begin{prop}
Given an ideal triangulation of the surface $F$ with edges $E$, the isotopy classes of simple diagrams on  $F$ are in one to one correspondence with admissible colorings of $E$. 
If $S$ is a simple diagram, we denote the coloring corresponding to $S$ by $f_S$. If $f:E\rightarrow \mathbb{Z}_{\geq 0}$ is an admissible coloring then we denote the isotopy class of simple diagrams corresponding to $f$ by $[f]$. 
\end{prop}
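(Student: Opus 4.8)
The plan is to put a simple diagram into \emph{normal form} with respect to the ideal triangulation and to read the coloring off the edge-intersection counts. Given a simple diagram $S$, first isotope it so that each component is transverse to every edge $e$ and meets it minimally, so that no component lies inside a single triangle, and so that within each unfolded triangle $S$ is a disjoint union of arcs each joining two \emph{distinct} sides. Such a representative exists by the standard pull-tight argument: a non-minimal intersection with an edge yields a bigon between $S$ and that edge (or an arc of $S$ with both endpoints on one side), and a component inside a triangle bounds a disk in that triangle; any such configuration can be removed by an isotopy of $S$, and the procedure terminates because it strictly decreases $\sum_{e\in E}\#(S\cap e)$. Since a normal representative realizes the geometric intersection number $i(S,e)$ for every edge $e$ simultaneously, the function $f_S(e):=\#(S\cap e)$ is independent of all choices and of the isotopy class of $S$; this defines the assignment $S\mapsto f_S$.

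To see that $f_S$ is admissible, work inside an unfolded triangle with sides $a,b,c$. A normal arc is of one of three types, recorded by the side it misses; let $x_a,x_b,x_c$ be the corresponding multiplicities. Counting endpoints on each side gives $f_S(a)=x_b+x_c$, $f_S(b)=x_a+x_c$, $f_S(c)=x_a+x_b$, hence $x_a=\tfrac12\bigl(f_S(b)+f_S(c)-f_S(a)\bigr)$ and cyclically. The requirement that $x_a,x_b,x_c$ be nonnegative integers is precisely the evenness of $f_S(a)+f_S(b)+f_S(c)$ together with the three triangle inequalities. For a folded triangle, where side $a$ is the image of two sides of $\Delta_i$ and $b$ of one, the identification forces the arc families meeting the doubled side to be matched, and the same count yields the two conditions $f_S(b)$ even and $2f_S(a)\ge f_S(b)$. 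Thus $f_S$ is admissible, and we obtain a well-defined map from isotopy classes of simple diagrams to admissible colorings.

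For the inverse, given an admissible $f$ I would build a diagram $[f]$ as follows. In each unfolded triangle place $\tfrac12(f(b)+f(c)-f(a))$ disjoint copies of the arc missing side $a$, and cyclically; admissibility makes these integers nonnegative, and a disjoint system of arcs in a triangle with prescribed connection counts is unique up to isotopy. On each edge $e$ there are now $f(e)$ endpoints on each of its two sides; matching the $i$-th endpoint on one side to the $i$-th on the other is the unique bigon-free way to glue, and doing this along every edge produces a properly embedded $1$-manifold, i.e.\ a system of disjoint simple closed curves in normal form. No component of this system bounds a disk: if it did, an innermost edge-arc in that disk would cut off a bigon with an edge, and if the disk met no edges it would be a component contained in a triangle, both of which are excluded by normality. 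The folded triangles are handled by the same local model after taking the quotient by the folding identification, the conditions $f(b)$ even and $2f(a)\ge f(b)$ being exactly what makes the quotient of the local arc system an embedded $1$-manifold.

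It remains to check that the two assignments are mutually inverse. Starting from $S$ in normal form, the multiplicities $x_a,x_b,x_c$ in each triangle are determined by $f_S$, the arc system in each triangle is then determined up to isotopy, and the gluing along each edge is forced, so $S$ is isotopic to $[f_S]$; conversely $[f]$ is by construction already in normal form and meets each edge $e$ in $f(e)$ points, so $f_{[f]}=f$. The step I expect to be the main obstacle is the gluing analysis, especially for folded triangles: one must show there is genuinely \emph{no} freedom in how endpoints on an edge get matched, rule out accidental disk-bounding components, and verify the folded-triangle local model carefully. Once these are pinned down, the rest is the elementary linear algebra of the three arc-multiplicities per triangle.
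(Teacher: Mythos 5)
Your proposal is correct and follows the standard normal-curve argument (arc types per triangle, the linear system relating arc multiplicities to edge weights, and the forced bigon-free gluing), which is the same route the paper takes — the paper's own proof merely asserts this standard correspondence in two sentences, so your write-up supplies the details it leaves implicit. The folded-triangle case you flag as the main obstacle works out exactly as you expect: with the two identified sides each carrying weight $f(a)$, the arc multiplicities are $\tfrac12 f(b)$, $\tfrac12 f(b)$, and $f(a)-\tfrac12 f(b)$, whose nonnegativity and integrality are precisely the stated admissibility conditions.
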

\begin{proof}
Given an admissible coloring, there is a unique isotopy class of simple diagrams, so that the geometric intersection number of a transverse diagram $S$ in the isotopy class with each edge $a\in E$ is $f(a)$. Furthermore, if $S$ is a transverse simple diagram  then $f(a)=i(a,S)$ defines an admissible coloring of $E$. 
\end{proof}

Note that $S\in [f_S]$. A simple diagram that realizes its geometric intersection number with the edges of the triangulation is said to be in {\bf normal position}. 

The set of isotopy classes of simple diagrams is well ordered via its corresondence with admissible colorings of the triangulation.
\begin{definition}\label{wellorder}
Given an ideal triangulation of the surface $F$ with edges $E$ choose an ordering of  $E$. Each admissible coloring of $E$ can then be represented as a $(-3e)$-tuple of nonnegative integers.  Order these tuples lexicographically to get a well ordering of admissible colorings.  
\end{definition}

The sum of admissible colorings is admissible, and the set of admissible colorings forms a well ordered monoid under this operation. This well ordering gives rise to a well ordering of isotopy classes of simple diagrams. In this ordering the smallest admissible coloring is the coloring that assigns $0$ to each edge.
\begin{definition}\label{leadterm}   If $\alpha\in K_N(F)$ is a skein, we can write $\alpha$ as a linear combination with complex coefficients of finitely many simple diagrams.  Define the {\bf lead term} of $\alpha$ to be 
the term that involves the largest simple diagram with nonzero coefficient, and denote it $ld(\alpha)$.  
\end{definition}
It was proved in \cite{AF2} that lead terms behave well with respect to multiplication of skeins.

\begin{theorem}[\cite{AF2}]\label{lead} If $aS$ is the lead term of the skein $\alpha$ and $bS'$ is the lead term of the skein $\beta$, where $a,b\in \mathbb{C}$ and $S$ and $S'$ are simple diagrams, then there exists an integer $n$ such that the lead term of $\alpha*\beta$ is $A^nab[f_S+f_{S'}]$. \end{theorem}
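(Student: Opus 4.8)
The plan is to reduce the statement to a product of two simple diagrams and then resolve crossings with the Kauffman bracket, keeping track of geometric intersection numbers with the edges of a fixed ideal triangulation. Write $\alpha=\sum_i a_iS_i$ and $\beta=\sum_j b_jS_j$ as $\mathbb C$-linear combinations of simple diagrams (Proposition~\ref{simplediag}), indexed so that $aS=a_{i_0}S_{i_0}=ld(\alpha)$ and $bS'=b_{j_0}S_{j_0}=ld(\beta)$; here $S=S_{i_0}$ is the largest simple diagram occurring in $\alpha$ and $S'=S_{j_0}$ the largest occurring in $\beta$. By distributivity $\alpha*\beta=\sum_{i,j}a_ib_j\,(S_i*S_j)$, so it suffices to understand $ld(S_i*S_j)$ for simple diagrams and to check that no cancellation occurs among the resulting leading terms.

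The core assertion I would establish first is that for simple diagrams $S_1,S_2$,
\begin{equation*}
S_1*S_2=A^{m}[f_{S_1}+f_{S_2}]+(\text{lower order terms}),
\end{equation*}
where ``lower order'' means a linear combination of $[f]$'s with $f<f_{S_1}+f_{S_2}$ in the well-ordering of Definition~\ref{wellorder}, and $m\in\mathbb Z$ depends only on the isotopy classes of $S_1$ and $S_2$. To prove it, fix an ideal triangulation with edge set $E$, isotope $S_1$ into $F\times\{2/3\}$ and $S_2$ into $F\times\{1/3\}$, put both into normal position with respect to the triangulation and into minimal position with respect to each other (no bigons), and arrange that all crossings of the projection to $F$ are disjoint from the edges. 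Resolving the $c$ crossings via (\ref{KBSR}) writes $S_1*S_2$ as a sum over the $2^{c}$ states $D$; each state contributes a power of $A$, times $(-A^2-A^{-2})^{t}$ where $t$ is the number of trivial circles of $D$, times the simple diagram $[\hat f_D]$ to which $D$ reduces. Because the smoothings occur away from the edges, $D$ meets each edge $e$ in exactly $i(S_1,e)+i(S_2,e)=f_{S_1}(e)+f_{S_2}(e)$ points, and passing to minimal position (deleting trivial circles, removing bigons with the edges) can only decrease this count; hence $\hat f_D\le f_{S_1}+f_{S_2}$ coordinatewise, and therefore $[\hat f_D]\le[f_{S_1}+f_{S_2}]$ in the well-order. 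That bounds every state. For the top term, the point is that there is exactly one state $D_{\max}$ whose underlying curve system is already in normal position, in minimal position with every edge, and free of trivial circles; concretely one takes the same crossing resolution at every crossing, the one determined by the stacking order. This $D_{\max}$ represents $[f_{S_1}+f_{S_2}]$ and contributes to it the single monomial $A^{m}$ (with $m=\pm c$, no $(-A^2-A^{-2})$ factors entering), while every other state is reducible and so contributes only diagrams $[\hat f_D]$ with $\hat f_D<f_{S_1}+f_{S_2}$. Summing states, the coefficient of $[f_{S_1}+f_{S_2}]$ in $S_1*S_2$ is exactly $A^{m}$.

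Granting the core assertion, $ld(S_i*S_j)=A^{m_{ij}}[f_{S_i}+f_{S_j}]$. Since the admissible colorings form a well-ordered monoid --- the sum of admissible colorings is admissible, and for the lexicographic order $g_1<g_1'$ together with $g_2\le g_2'$ forces $g_1+g_2<g_1'+g_2'$ --- and since $S_i\le S$, $S_j\le S'$ with equality only at $(i,j)=(i_0,j_0)$, we get $f_{S_i}+f_{S_j}<f_S+f_{S'}$ whenever $(i,j)\ne(i_0,j_0)$. Thus the simple diagram $[f_S+f_{S'}]$ occurs in $\alpha*\beta$ only through the single summand $a_{i_0}b_{j_0}\,(S_{i_0}*S_{j_0})$, which contributes $a_{i_0}b_{j_0}A^{m_{i_0j_0}}[f_S+f_{S'}]=A^{n}ab[f_S+f_{S'}]$ with $n:=m_{i_0j_0}$; every other summand contributes only diagrams strictly below $[f_S+f_{S'}]$, so there is nothing to cancel this term, and as $a,b\ne 0$ it is the largest simple diagram with nonzero coefficient. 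Hence $ld(\alpha*\beta)=A^{n}ab[f_S+f_{S'}]$.

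The main obstacle is the leading-coefficient half of the core assertion: exhibiting the distinguished state $D_{\max}$ and proving it is the \emph{only} state contributing to the maximal simple diagram, so that no cancellation can occur at the top degree. Concretely one must show (a) some state resolves to a curve system in normal and minimal position with no trivial circles --- so that the bound $f_{S_1}+f_{S_2}$ is actually attained --- and (b) every other state is reducible and hence strictly drops the intersection number with at least one edge. Both are local-to-global statements about normal curves on the triangulated surface: a ``wrong'' smoothing at a crossing inside a triangle must, once the diagram is pulled tight, create a bigon with a side of that triangle or an extra trivial component, forcing a strict decrease of $\hat f_D$ in some coordinate. This is where the real work lies, and a little extra bookkeeping is needed to handle folded triangles in the normal-position and admissibility analysis.
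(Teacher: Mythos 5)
The paper does not actually prove this theorem; it is imported from \cite{AF2}, so there is no internal proof to compare against. Your reduction to the case of two simple diagrams is correct and is the standard route: writing $\alpha$ and $\beta$ in the simple-diagram basis, using that the lexicographic order on admissible colorings is translation-invariant (so $f_{S_i}+f_{S_j}<f_S+f_{S'}$ unless $(i,j)=(i_0,j_0)$), and concluding that only the product of the two lead terms can contribute to the coefficient of $[f_S+f_{S'}]$. Your ``core assertion'' about $S_1*S_2$ is also the right intermediate statement, and you have correctly isolated the crux: one must show that exactly one state of the Kauffman resolution survives at the top coloring, since otherwise the top coefficient would be a sum of powers of $A$, which could vanish at a root of unity.

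Two caveats. First, you explicitly leave existence and uniqueness of the taut state as ``where the real work lies,'' so as written this is a correct strategy rather than a complete proof. The missing step is exactly the normal-arc analysis you gesture at: a crossing inside an ideal triangle occurs between two normal arcs cutting off distinct corners; one smoothing returns two normal arcs of the same corner types (preserving every edge intersection), while the other produces an arc with both endpoints on a single side, forcing an innermost bigon with that edge and a strict drop of $\hat f_D$ in that coordinate, whence a strict lexicographic drop. Second, your description of $D_{\max}$ as ``the same crossing resolution at every crossing, the one determined by the stacking order,'' with $m=\pm c$, is not correct: which of the two smoothings is the good one at a given crossing is determined by which corners of the ambient triangle the two arcs cut off (already on the once-punctured torus it depends on the direction of the diagonal edge of the triangulation), so the good smoothings are in general a mixture of $A$- and $A^{-1}$-smoothings and $m$ is merely some integer with $|m|\le c$. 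This does not affect the theorem, which asserts only the existence of $n$, but the uniform-state description would mislead anyone trying to fill in the details.
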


 Define the {\bf Chebyshev polynomials of the first kind} by $T_0=2$, $T_1=x$ and recursively by $T_k=xT_{k-1}-T_{k-2}$. In closed form, \begin{equation}\label{tcheb}T_k(x)=\sum_{i=0}^{\lfloor{k/2}\rfloor}(-1)^i\frac{k}{k-i}\binom{k-i}{i}x^{k-2i}.\end{equation}
The Chebyshev polynomials satisfy three important identities: 
\begin{itemize}  
\item The {\bf product to sum formula},
\begin{equation} T_a(x)T_b(x)=T_{a+b}(x)+T_{|a-b|}(x);\end{equation}
\item The {\bf multiplicative property},
\begin{equation}T_a(T_b(x))=T_{ab}(x);\end{equation}
\item  The {\bf DeMoivre's formula},
\begin{equation}
T_k(z+z^{-1})=z^k+z^{-k}.\end{equation}
\end{itemize}
In fact, $T_k(x)$ is the polynomial such that $T_k(2\cos{\theta})=2\cos{k\theta}$.

Let $x$ denote a simple closed curve on a twice punctured sphere $Ann$ that is a deformation retract of $Ann$. The structure of the Kauffman bracket skein algebra  of $Ann$ is well known.
\begin{prop}
The algebra $K_N(Ann)$ is isomorphic to $\mathbb{C}[x]$, polynomials in $x$ with complex coefficients. 
\end{prop}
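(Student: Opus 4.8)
The plan is to read off the vector-space structure of $K_N(Ann)$ from Proposition \ref{simplediag} and then recognize the multiplication as polynomial multiplication in the core curve $x$. Since $Ann$ is homeomorphic to an open annulus, it carries a unique isotopy class of essential simple closed curve, namely the deformation retract $x$; consequently every simple diagram on $Ann$ is a disjoint union of $k\geq 0$ parallel copies of $x$, a family I will denote $x^{(k)}$, with $x^{(0)}$ the empty link. By Proposition \ref{simplediag} the classes $\{x^{(k)}\}_{k\geq 0}$ form a $\mathbb{C}$-basis of $K_N(Ann)$.

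Next I would identify the product. The key observation is that $k$ parallel copies of the core curve, stacked at distinct heights in $Ann\times[0,1]$, can be isotoped to lie in a single level as the blackboard-framed diagram $x^{(k)}$: in the projection to $Ann$ they have no crossings and cut off no disks, so no Kauffman bracket skein relation is needed to reduce them. Hence the $k$-fold product $x*x*\cdots*x$ in $K_N(Ann)$ equals $x^{(k)}$ exactly, with coefficient $1$ and no lower-order terms. (Alternatively this is consistent with Theorem \ref{lead}, which pins the lead term of $x^k$ to $A^n[k\cdot x]$, but the direct geometric argument gives $n=0$ and the absence of correction terms at once.)

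Finally I would assemble the isomorphism. The assignment $p(x)\mapsto p(x)$, interpreting powers of the variable as powers of the core skein, defines a $\mathbb{C}$-algebra homomorphism $\Phi\colon \mathbb{C}[x]\to K_N(Ann)$. By the previous paragraph $\Phi$ carries the monomial basis $\{x^k\}_{k\geq 0}$ of $\mathbb{C}[x]$ onto the basis $\{x^{(k)}\}_{k\geq 0}$ of $K_N(Ann)$, so $\Phi$ is simultaneously surjective (the image spans) and injective (it sends a basis to a basis), hence an isomorphism.

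I do not anticipate a genuine obstacle here: the argument is essentially a bookkeeping consequence of Proposition \ref{simplediag}. The only point deserving an explicit sentence is the claim that stacking parallel core curves yields the simple diagram $x^{(k)}$ with unit coefficient and no additional terms, which follows because parallel essential curves in an annulus can be pushed off one another without introducing crossings, so the skein relations never enter.
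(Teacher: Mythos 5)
Your proposal is correct and follows the same route as the paper: the paper's one-line proof likewise sends the core skein $x$ to the variable $x$ and observes that every simple diagram on $Ann$ consists of parallel copies of $x$, which by Proposition \ref{simplediag} form a $\mathbb{C}$-basis; you have simply spelled out the (true and worth noting) point that stacking $k$ parallel core curves produces exactly the $k$-component diagram with no skein relations invoked, so the monomial basis maps bijectively onto the diagram basis.
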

\begin{proof}The isomorphism is given by sending the skein $x$ to the variable $x$, since the simple diagrams on $Ann$ are isotopic to parallel copies of $x$. 
\end{proof}
\begin{cor}
Chebyshev polynomials $\{ T_k(x) | k\in {\mathbb Z}_{\geq 0}\}$ form a basis for $K_N(Ann)$ over the complex numbers.
\end{cor}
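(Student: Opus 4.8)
The plan is to deduce this immediately from the preceding proposition, which identifies $K_N(Ann)$ with the polynomial ring $\mathbb{C}[x]$ carrying the standard monomial basis $\{x^k \mid k \in \mathbb{Z}_{\geq 0}\}$. Under this identification it suffices to show that $\{T_k(x) \mid k \in \mathbb{Z}_{\geq 0}\}$ is a $\mathbb{C}$-vector space basis of $\mathbb{C}[x]$, so the whole content is a statement about the Chebyshev polynomials, not about skeins.

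First I would record that $T_k(x)$ is a polynomial of degree exactly $k$ whose top-degree coefficient is nonzero. This is visible directly from the closed form \eqref{tcheb}: the $i=0$ summand contributes $\tfrac{k}{k}\binom{k}{0}x^{k}=x^{k}$, and every other summand has strictly smaller degree, so for $k\geq 1$ the polynomial $T_k$ is monic of degree $k$; for $k=0$ we have $T_0=2$, degree $0$ with leading coefficient $2\neq 0$. (Alternatively this follows by an immediate induction from the recursion $T_k = xT_{k-1}-T_{k-2}$ together with $T_0=2$, $T_1=x$.)

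Next I would observe that this degree information makes the change of basis triangular: for every $n\geq 0$, the vectors $T_0,\dots,T_n$ expressed in the ordered basis $1,x,\dots,x^n$ form an upper-triangular matrix with diagonal entries $2,1,1,\dots,1$, hence with nonzero determinant. Therefore $\{T_0,\dots,T_n\}$ is a basis of the space $\mathbb{C}[x]_{\leq n}$ of polynomials of degree at most $n$ for each $n$. Taking the union over all $n$ shows $\{T_k \mid k\geq 0\}$ spans $\mathbb{C}[x]$ and is linearly independent, i.e.\ it is a basis; transporting back along the isomorphism $K_N(Ann)\cong\mathbb{C}[x]$ finishes the proof. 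There is no real obstacle here — the only point requiring (a one-line) verification is the leading-term claim for $T_k$, and everything else is the standard fact that a degree-graded triangular family of polynomials is a basis.
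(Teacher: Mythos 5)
Your argument is correct and is exactly the paper's argument: the paper's proof is the one-line observation that the $T_k$ have the same leading terms as the monomials $x^k$, and you have simply spelled out the triangular change-of-basis reasoning that makes this observation conclusive. No differences of substance.
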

\begin{proof}
This follows from the fact that they have the same leading terms as the standard basis $\{x^k |k\in {\mathbb Z}_{\geq 0}\}$.
\end{proof}

Since a framed link is a disjoint union of oriented annuli embedded in a three-manifold $M$,   we can {\bf thread} the components of a  link with copies of $T_k(x)$, using the annulus as a guide.  It is a theorem of Bonahon and Wong \cite{BW1}, that  when all components of links are threaded with $T_N(x)$  this operation gives a map $\tau:K_1(M)\rightarrow K_N(M)$. Furthermore, if a component of a link is threaded with $T_N(x)$ then you can change its crossings with any other component, without changing the skein.  In the case where $F$ is a surface they prove the following: 

\begin{theorem}[\cite{BW1}]\label{thread}
The threading  map 
\begin{equation} \tau:K_1(F)\rightarrow K_N(F)\end{equation} is an embedding of $K_1(F)$ into the center of $K_N(F)$. 
\end{theorem}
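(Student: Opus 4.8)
The plan is to verify three things about the $\mathbb{C}$-linear map $\tau$: that it respects the stacking product, that its image lies in the center of $K_N(F)$, and that it is injective. The first two are essentially formal given what has just been recalled, whereas the injectivity is where Theorem \ref{lead} does the real work.

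For multiplicativity, observe that threading every component of a framed link with $T_N(x)$ commutes with stacking one link above another, so for framed links $L_1,L_2$ the skein $\tau(L_1)*\tau(L_2)$ is literally the result of threading the link ``$L_1$ stacked over $L_2$''; combined with the already-cited fact that $\tau$ is well defined on $K_1(F)$, this gives $\tau(\alpha*\beta)=\tau(\alpha)*\tau(\beta)$, and $\tau$ carries the empty link to the empty link. For centrality it is enough to check $\tau(L)*L'=L'*\tau(L)$ for framed links $L,L'$: sliding $\tau(L)$ from above $L'$ to below it in the cylinder is accomplished by a finite sequence of crossing changes between the components of $\tau(L)$ --- each threaded with $T_N(x)$ --- and $L'$, and by the transparency property recalled above none of these changes alters the skein. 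By linearity, every element of $\tau(K_1(F))$ then lies in $Z(K_N(F))$.

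The core of the argument is injectivity, which I would get by computing lead terms. Fix an ideal triangulation with ordered edge set $E$ and the associated well ordering of simple diagrams (Definition \ref{wellorder}). Given a simple diagram $S$, write it as a disjoint union of $n_i$ parallel copies of pairwise non-parallel simple closed curves $\gamma_i$, so that $f_S=\sum_i n_i f_{\gamma_i}$; then $\tau(S)$ is the product in $K_N(F)$ of the pairwise commuting skeins $T_N(\gamma_i)$, the $i$-th one appearing $n_i$ times. By the closed form (\ref{tcheb}), inside a regular neighborhood of $\gamma_i$ the skein $T_N(\gamma_i)$ equals $\gamma_i^N-N\gamma_i^{N-2}+\cdots$, so its lead term is $[N f_{\gamma_i}]$ with coefficient $1$. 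Applying Theorem \ref{lead} repeatedly to this product yields $ld(\tau(S))=A^{n_S}[N f_S]$ for some integer $n_S$; in particular every simple diagram occurring in $\tau(S)$ is $\le [N f_S]$ in the well ordering, and $[N f_S]$ occurs in $\tau(S)$ with the nonzero coefficient $A^{n_S}$.

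Now I would finish using that $f\mapsto Nf$ is an injection of the monoid of admissible colorings into itself which is strictly order-preserving for the lexicographic order (oddness of $N$ enters only to ensure $Nf$ is again admissible). Let $\alpha=\sum_S c_S S$ be a nonzero element of $K_1(F)$, a finite sum over simple diagrams with $c_S\in\mathbb{C}$, and let $S_0$ be the largest $S$ with $c_S\ne 0$. In $\tau(\alpha)=\sum_S c_S\,\tau(S)$ the simple diagram $[N f_{S_0}]$ receives the contribution $c_{S_0}A^{n_{S_0}}$ from the summand $\tau(S_0)$, while every other $S$ with $c_S\ne 0$ has $f_S<f_{S_0}$, hence $Nf_S<Nf_{S_0}$, so its summand contributes nothing to the coefficient of $[N f_{S_0}]$; thus that coefficient equals $c_{S_0}A^{n_{S_0}}\ne 0$ and $\tau(\alpha)\ne 0$. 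I expect the one genuinely delicate point to be the lead-term identity $ld(\tau(S))=A^{n_S}[N f_S]$: it requires decomposing $S$ into its parallelism classes, checking that their neighborhoods can be made simultaneously normal with respect to the triangulation so that $f_S=\sum_i n_i f_{\gamma_i}$, and then tracking the iterated use of Theorem \ref{lead}; everything else is bookkeeping once the well ordering is in place.
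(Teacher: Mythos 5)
The paper does not prove this theorem at all: it is quoted from Bonahon--Wong \cite{BW1} (with a skein-theoretic proof attributed to L\^{e} \cite{Le}), so there is no in-paper argument to compare yours against line by line. What you have written is a correct \emph{derivation} of the stated theorem from the two facts the paper recalls immediately beforehand --- that threading every component with $T_N(x)$ gives a well-defined map $\tau:K_1(M)\to K_N(M)$, and that a $T_N$-threaded component is transparent to crossing changes. You should be aware that essentially all of the difficulty of the Bonahon--Wong result lives inside those two inputs (the ``miraculous cancellations''), so your proof does not replace the citation; but granting them, your multiplicativity and centrality arguments are exactly right, and your injectivity argument is sound and genuinely adds something: $ld(T_N(\gamma_i))=[Nf_{\gamma_i}]$ with coefficient $1$, Theorem \ref{lead} applied iteratively gives $ld(\tau(S))=A^{n_S}[Nf_S]$, and since $f\mapsto Nf$ is a strictly order-preserving injection of admissible colorings, the top term of $\tau(\alpha)$ cannot cancel. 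This is consistent with the paper's remark that $\tau$ maps the lead-term filtration monotonically. One small correction: your parenthetical that oddness of $N$ is needed for $Nf$ to be admissible is not right --- the parity and triangle conditions are preserved under multiplication by any positive integer --- so oddness plays no role in your injectivity step; it is needed only for the cited well-definedness and transparency statements.
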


A purely skein theoretic proof of this theorem was given by L\^{e} in \cite{Le}.
Our viewpoint is  that  $K_N(F)$ is a central extension of $K_1(F)$.
For that reason the image of $\tau$ is denoted by $\chi(F)$, to remind the reader that it is canonically isomorphic to the coordinate ring of the $SL_2\mathbb{C}$-character variety of the fundamental group of $F$, see Theorem \ref{Dougs}.  The rings $K_1(F)$ and $K_N(F)$ are filtered by leading terms. Notice that  $\tau$ maps the filtered submodules of $K_1(F)$ monotonically to the filtered submodules of $K_N(F)$.

In \cite{AF2} it is  proved that:
\begin{theorem}\label{spanning} If $F$ is a surface of finite type,  then there exist simple closed curves $J_1,\ldots,J_n$ on $F$  so that the collection of skeins $T_{k_1}(J_1)*T_{k_2}(J_2)*\ldots*T_{k_n}(J_n)$, where $T_{k_i}(J_i)$ denotes the result of threading $J_i$ with the Chebyshev polynomial $T_{k_i}(x)$  and where the $k_i$ range over all natural  numbers, spans $K_N(F)$ as a vector space over the complex numbers. \end{theorem}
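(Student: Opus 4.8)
\emph{Proof proposal.} The plan is to induct on the well-ordering of simple diagrams from Definition~\ref{wellorder}, using Theorem~\ref{lead} to control the lead terms of products of threaded Chebyshev polynomials. Fix an ideal triangulation of $F$ with edge set $E$ together with an ordering of $E$; we may assume $e(F)<0$, the excluded orientable surfaces of finite type --- the sphere, the plane, the annulus, and the torus --- being handled directly (for instance $K_N(Ann)=\mathbb{C}[x]$ is spanned by the $T_k$ of its core curve). Under this hypothesis the whole problem becomes a statement about the monoid $\mathcal{A}\subseteq\mathbb{Z}_{\geq 0}^E$ of admissible colorings: it suffices to find simple closed curves $J_1,\ldots,J_n$ on $F$ such that $f_{J_1},\ldots,f_{J_n}$ generate $\mathcal{A}$ as a monoid.

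To produce such curves, note that the triangle inequalities and folded-triangle inequalities are finitely many homogeneous linear inequalities and so cut out a rational polyhedral cone $\mathcal{C}\subseteq\mathbb{R}^E$, while the parity conditions define a finite-index sublattice $L\subseteq\mathbb{Z}^E$; hence $\mathcal{A}=\mathcal{C}\cap L\cap\mathbb{Z}_{\geq 0}^E$ is the set of lattice points of a rational cone, and is finitely generated by Gordan's lemma. Choose a finite generating set $g_1,\ldots,g_r$ of $\mathcal{A}$. Each $g_j$ is the coloring of a simple diagram $[g_j]$, which is a disjoint union of parallel families of its simple closed curve components $C_{jl}$, so $g_j=\sum_l m_{jl}f_{C_{jl}}$ with $m_{jl}\geq 0$. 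Each $C_{jl}$ bounds no disk (it is a component of a simple diagram) and meets some edge (a simple closed curve disjoint from every edge would lie in an open triangle, hence bound a disk), so $f_{C_{jl}}\neq 0$, and $f_{C_{jl}}=i(\,\cdot\,,C_{jl})\in\mathcal{A}$. Let $J_1,\ldots,J_n$ be the full list of the curves $C_{jl}$; then $f_{J_1},\ldots,f_{J_n}$ still generate $\mathcal{A}$.

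The next step is to compute the lead term of $P(k_1,\ldots,k_n):=T_{k_1}(J_1)*T_{k_2}(J_2)*\cdots*T_{k_n}(J_n)$. Since $T_k(x)=x^k+(\text{lower order terms})$ and threading $J_i$ with $x^k$ produces $k$ parallel copies of $J_i$, whose coloring is $kf_{J_i}$, and since $f_{J_i}\neq 0$ forces $jf_{J_i}<kf_{J_i}$ for $j<k$ in the ordering of Definition~\ref{wellorder}, the lead term of $T_{k_i}(J_i)$ is $1\cdot[k_if_{J_i}]$. Applying Theorem~\ref{lead} repeatedly then shows that the lead term of $P(k_1,\ldots,k_n)$ is $A^m\,[k_1f_{J_1}+\cdots+k_nf_{J_n}]$ for some integer $m=m(k_1,\ldots,k_n)$, with $A^m$ a nonzero scalar.

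Finally, let $V\subseteq K_N(F)$ be the span of the skeins $T_{k_1}(J_1)*\cdots*T_{k_n}(J_n)$, and suppose some simple diagram lies outside $V$; let $S$ be the least such in the well-ordering. Because $f_{J_1},\ldots,f_{J_n}$ generate $\mathcal{A}$ we may write $f_S=k_1f_{J_1}+\cdots+k_nf_{J_n}$ with $k_i\geq 0$; setting $P:=P(k_1,\ldots,k_n)$, the skein $S-A^{-m}P$ has lead term strictly smaller than $[f_S]$, hence is a linear combination of simple diagrams all strictly below $S$, which lie in $V$ by minimality of $S$. Thus $S-A^{-m}P\in V$, and since $P\in V$ we get $S\in V$, a contradiction; so $V$ contains every simple diagram and $V=K_N(F)$ by Proposition~\ref{simplediag}. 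The step I expect to demand the most care is the reduction to the monoid $\mathcal{A}$ together with the verification that $\mathcal{A}$ can be generated by colorings of honest simple closed curves with $f_{J_i}\neq 0$; the finite generation itself is soft, and the inductive descent is routine once Theorem~\ref{lead} is in hand.
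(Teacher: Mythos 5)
Your argument is correct. The paper itself does not prove Theorem \ref{spanning}---it is quoted from \cite{AF2}---but your route (Gordan's lemma applied to the rational cone of admissible colorings to get finite generation of the monoid $\mathcal{A}$ by colorings $f_{J_i}$ of connected curves, followed by descending induction on lead terms via Theorem \ref{lead}) is precisely the strategy of the cited proof, and every step checks out: the decomposition $g_j=\sum_l m_{jl}f_{C_{jl}}$, the nonvanishing $f_{C_{jl}}\neq 0$, and the identification of the lead term of $T_{k_1}(J_1)*\cdots*T_{k_n}(J_n)$ as $A^m[\textstyle\sum_i k_if_{J_i}]$ are all justified. The one soft spot is the parenthetical claim that the closed torus is ``handled directly'': it is a finite-type surface with no ideal triangulation, so it genuinely escapes this method and needs the product-to-sum formula for $(p,q)$-curves instead, though it also lies outside the scope of the rest of the paper, which assumes a puncture and negative Euler characteristic.
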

This  theorem has an important corollary:

\begin{cor}[\cite{AF2}] \label{finite} For any finite type surface $F$, the module $K_N(F)$ is a finite central extension of $\chi(F)$. More specifically, if $J_i$ are the curves from Theorem \ref{spanning} then the skeins  $T_{k_1}(J_1)*T_{k_2}(J_2)*\ldots*T_{k_n}(J_n)$  where the $k_i$ range over $\{0,\ldots,N-1\}$ span $K_N(F)$ as a module over $\chi(F)$.\end{cor}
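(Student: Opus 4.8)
The plan is to obtain this from Theorem~\ref{spanning} by reducing each exponent $k_i$ modulo $N$, exploiting that threading a curve with $T_N$ lands in the central subring $\chi(F)$.

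\emph{Step 1: a single-curve reduction.} Fix an essential simple closed curve $J$ on $F$. The skeins obtained by threading parallel copies of $J$ generate a subalgebra of $(K_N(F),*)$ that is a homomorphic image of $K_N(Ann)\cong\mathbb{C}[x]$, because such parallel copies can be isotoped into one embedded annular neighborhood of $J$, where the stacking product agrees with the product in $K_N(Ann)$. Consequently the Chebyshev product-to-sum identity transfers verbatim to threaded skeins: $T_N(J)*T_{k-N}(J)=T_k(J)+T_{|2N-k|}(J)$ whenever $k\ge N$. Since $T_N(J)=\tau([J])\in\chi(F)$ by the definition of the threading map (Theorem~\ref{thread}), and $\chi(F)$ is central, this gives $T_k(J)=T_N(J)*T_{k-N}(J)-T_{|2N-k|}(J)$ with both $k-N$ and $|2N-k|$ strictly less than $k$. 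A strong induction on $k$, with the trivial base cases $0\le k\le N-1$ and the observation that the $\chi(F)$-submodule of $K_N(F)$ generated by $T_0(J),\dots,T_{N-1}(J)$ contains $\chi(F)\cdot T_0(J)=\chi(F)$ (hence contains $T_N(J)$), shows that $T_k(J)$ lies in that submodule for every $k\ge 0$.

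\emph{Step 2: propagate through products.} Take a spanning skein $T_{k_1}(J_1)*\cdots*T_{k_n}(J_n)$ from Theorem~\ref{spanning}. By Step~1, $T_{k_1}(J_1)=\sum_{r=0}^{N-1}a_r\,T_r(J_1)$ with $a_r\in\chi(F)$; multiplying on the right by $T_{k_2}(J_2)*\cdots*T_{k_n}(J_n)$ rewrites the skein as a $\chi(F)$-combination of products $T_r(J_1)*T_{k_2}(J_2)*\cdots*T_{k_n}(J_n)$ with $0\le r\le N-1$. Repeating this with $J_2$, then $J_3,\dots,J_n$ — each time pulling the newly produced central coefficients out past the already-reduced factors, using that $\chi(F)$ is central — expresses the original skein as a $\chi(F)$-linear combination of the finitely many skeins $T_{r_1}(J_1)*\cdots*T_{r_n}(J_n)$ with all $r_i\in\{0,\dots,N-1\}$. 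Combined with Theorem~\ref{spanning}, these span $K_N(F)$ over $\chi(F)$, so $K_N(F)$ is a finitely generated $\chi(F)$-module, i.e.\ a finite central extension.

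\emph{On the difficulty.} There is no deep obstacle. The one point deserving care is the claim used in Step~1 that threadings of parallel copies of a fixed curve multiply exactly as in $\mathbb{C}[x]$, so the Chebyshev identities hold on the nose for threaded skeins; once that is granted, the argument is a bounded induction on exponents together with bookkeeping that repeatedly uses the centrality of $\chi(F)$.
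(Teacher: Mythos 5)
Your proof is correct, and it is exactly the intended derivation: the paper states this corollary without proof (deferring to \cite{AF2}), and the argument there is precisely this reduction of exponents modulo $N$ via the product-to-sum formula inside an annular neighborhood of each $J_i$, using that $T_N(J_i)=\tau(J_i)$ is central. The only phrasing quibble is that for $k=N$ the index $|2N-k|$ is not strictly less than $k$, but you correctly dispose of that case separately by noting $T_N(J)\in\chi(F)=\chi(F)\cdot\tfrac12 T_0(J)$.
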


  Let 
\begin{equation}\mathcal{A}=\{f:E\rightarrow \mathbb{Z}_{\geq 0}| f \ \mathrm{is} \ \mathrm{admissible}\}\subset \mathbb{Z}_{\geq 0}^E.\end{equation} 
There is a quotient map, \begin{equation}res:\mathcal{A}\rightarrow \mathbb{Z}_N^E,\end{equation} that sends each  admissible coloring to the  $E$-tuple in $\mathbb{Z}_N$ consisting of the residue classes of its values 
modulo $N$.
\begin{prop} For odd natural numbers $N$ the map $res:\mathcal{A}\rightarrow \mathbb{Z}_N^E$  is onto. \end{prop}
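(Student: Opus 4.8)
The plan is to produce, for an arbitrary target tuple in $\mathbb{Z}_N^E$, an admissible coloring realizing it, by starting from any set of representatives and correcting it coordinate by coordinate, adding only nonnegative multiples of $N$ so that the residues are untouched, until both the parity conditions and the triangle inequalities in the definition of admissibility hold. Note that this implicitly uses the standing hypothesis $e(F)<0$, which is exactly what makes the triangulation and the edge set $E$ available in the first place.

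First I would take care of parity. For each edge $e$ pick the representative $r_e\in\{0,\dots,N-1\}$ of the prescribed residue and set $s_e:=r_e$ when $r_e$ is even and $s_e:=r_e+N$ when $r_e$ is odd; since $N$ is odd, $s_e$ is in every case nonnegative, even, and congruent to $r_e$ modulo $N$. With all coordinates even, every parity requirement is automatic: for an ordinary ideal triangle with distinct sides $a,b,c$ the number $s_a+s_b+s_c$ is a sum of even integers, and for a folded triangle the edge $b$ coming from a single side is assigned an even value. Next I would repair the inequalities, using that adding a multiple of $2N$ to a coordinate changes neither its parity nor its residue mod $N$, together with the elementary fact that a triple of large, mutually close nonnegative integers satisfies all triangle inequalities. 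Concretely, fix a multiple $K$ of $2N$ with $K\geq 2N$ and $K\geq\max_{e\in E}s_e$; for each $e$ there is a unique integer $j_e$ with $f(e):=s_e+2Nj_e\in[K,K+2N)$, and $j_e\geq 0$ because $0\le s_e\le K$. Then $f(e)\equiv r_e\pmod N$ for all $e$ and $f$ is everywhere even, while for any ideal triangle $f(a)+f(b)\geq 2K\geq K+2N>f(c)$ and for any folded triangle $2f(a)\geq 2K\geq K+2N>f(b)$. Hence $f$ is admissible and $res(f)$ is the prescribed tuple.

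The step I expect to require the most care is verifying that the two families of constraints can be met simultaneously: the parity conditions are killed once and for all by the global even choice, and the inequality repair is then carried out only with $2N$-shifts, which preserve evenness and residue. The inequalities themselves reduce to the observation that values confined to an interval of length $2N$ whose left endpoint is at least $2N$ obey every triangle and folded-triangle inequality. A small point to keep straight is the folded case, where the inequality $2f(a)\ge f(b)$ must be checked on its own, but this again follows from $K\ge 2N$ exactly as above.
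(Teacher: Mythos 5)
Your proof is correct and is essentially the paper's argument: both make every value even by adding an odd multiple of $N$ to the odd residues, then push all values into a narrow band far enough above zero that every triangle (and folded-triangle) inequality holds automatically. The paper simply fixes the band to be $[2N,4N)$ rather than introducing the parameter $K$.
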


\proof An element of $\mathbb{Z}_N^E$ can be thought of as a function $g:E\rightarrow \mathbb{Z}_{\geq 0}$ that takes on values between $0$ and $N-1$. Define a second function $f:E\rightarrow \mathbb{Z}_{\geq 0}$ as follows.  If $g(a)$ is odd, let $f(a)=g(a)+3N$. If $g(a)$ is even let $f(a)=g(a)+2N$.  Notice that all the values of $f$ are even, so for any $a,b,c$ that are the sides of an ideal triangle $f(a)+f(b)+f(c)$ is even. If $a,b$ are the sides of a folded triangle, and $a$ is the doubled edge, then $f(b)$ is even.  

If $a,b,c$ are the sides of a triangle, then $f(a)+f(b)\geq 4N\geq f(c)$. If $a,b$ are the sides of a folded triangle and $a$ is the doubled edge then $2f(a)\geq 4N\geq f(b)$.   This means that $f\in \mathcal{A}$.  

Finally notice that $res(f)=g$.  Since $g$ was arbitrary, $res:\mathcal{A}\rightarrow \mathbb{Z}_N^E$ is onto. \qed

Once a triangulation  of $F$ has been chosen, and the edges of the triangulation have been ordered, each nonzero skein $\alpha\in K_N(F)$ has a  lead term $ld(\alpha)=z[f]$ where $z \in \mathbb{C}^*$ and $f\in \mathcal{A}$ as in the Definition \ref{leadterm}. We define  $res(\alpha)$ to be $res(f)$.

In \cite{AF2} it is proved: 

\begin{prop}[\cite{AF2}] \begin{itemize}
\item A simple diagram $S$ is the lead term of an element of $\chi(F)$ if and only if the residue of its associated admissible coloring $res(f_S)$ is the $0$-tuple in $\mathbb{Z}_N^E$.
\item If $\{S_i\}$ is a collection of simple diagrams such that the residues of their associated colorings $res(f_{S_i})$ are distinct, then the skeins corresponding to the $S_i$ are independent over $\chi(F)$.\end{itemize}\end{prop}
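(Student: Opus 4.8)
The plan is to reduce both statements to the behavior of lead terms under the threading map $\tau:K_1(F)\to K_N(F)$ and under skein multiplication (Theorem~\ref{lead}). The key computational input I would establish first is that for any simple diagram $D$ with admissible coloring $f_D$, the skein $\tau(D)\in\chi(F)$ has lead term $A^n[Nf_D]$ for some integer $n$. Indeed, writing $D$ as a disjoint union of simple closed curves $J_1,\dots,J_m$ (counted with parallel multiplicities), $\tau(D)=T_N(J_1)\cdots T_N(J_m)$, and since $T_N(x)=x^N+(\text{lower order})$, threading $J_i$ with the top term $x^N$ produces $N$ parallel copies of $J_i$ — a simple diagram with coloring $Nf_{J_i}$ — while the lower order terms of $T_N$ contribute strictly smaller colorings. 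Iterating Theorem~\ref{lead} over the disjoint factors then yields $ld(\tau(D))=A^n[\sum_i Nf_{J_i}]=A^n[Nf_D]$. Because multiplying an $E$-tuple by the positive integer $N$ preserves the lexicographic order of Definition~\ref{wellorder}, it follows that $D\mapsto[Nf_D]$ carries the well ordering of simple diagrams to their lead diagrams in an order-preserving and injective way: $D'<D$ implies the lead diagram of $\tau(D')$ is strictly below that of $\tau(D)$.

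For the first bullet, suppose $res(f_S)=\mathbf{0}$, i.e. $N\mid f_S(a)$ for every edge $a$. Set $g=f_S/N\colon E\to\mathbb{Z}_{\geq0}$. I would check that $g$ is admissible: the triangle and folded-triangle inequalities for $f_S$ divide through by $N$, and since $N$ is odd, the parity conditions on $f_S$ force the corresponding parity conditions on $g$. Then $[g]$ is a simple diagram and $\tau([g])$ has lead term $A^n[Ng]=A^nS$, so $S$ is the lead term of an element of $\chi(F)$. Conversely, if $S=ld(\gamma)$ with $\gamma=\tau(\delta)$, expand $\delta$ in the simple-diagram basis of $K_1(F)$ with lead term $bD$; then $\gamma=b\,\tau(D)+\tau(\delta-bD)$, the first summand has lead diagram $[Nf_D]$, and every simple diagram occurring in $\tau(\delta-bD)$ lies strictly below $[Nf_D]$ by the order-preserving property above. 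Hence there is no cancellation at the top, $ld(\gamma)$ is a nonzero multiple of $[Nf_D]$, so $f_S=Nf_D$ and $res(f_S)=\mathbf{0}$.

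For the second bullet, suppose $\sum_i\gamma_i*S_i=0$ with $\gamma_i\in\chi(F)$, and let $I=\{i:\gamma_i\neq0\}$. For $i\in I$ write $ld(\gamma_i)=z_i[g_i]$ with $z_i\neq0$; by the first bullet $res(g_i)=\mathbf{0}$, so Theorem~\ref{lead} gives $ld(\gamma_i*S_i)=A^{n_i}z_i[g_i+f_{S_i}]$, a nonzero skein with $res(g_i+f_{S_i})=res(f_{S_i})$. Since the $res(f_{S_i})$, $i\in I$, are pairwise distinct, so are the admissible colorings $g_i+f_{S_i}$, hence the lead diagrams $[g_i+f_{S_i}]$; thus there is a unique largest one, occurring in exactly one summand $\gamma_{i_0}*S_{i_0}$ with nonzero coefficient and in no other $\gamma_j*S_j$ (which involve only strictly smaller diagrams). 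Then $\sum_i\gamma_i*S_i$ has a nonzero lead term, contradicting its vanishing unless $I=\emptyset$; so the $S_i$ are independent over $\chi(F)$.

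I expect the genuine obstacle to be the lead-term computation for the threading map in the first paragraph: verifying that threading with $T_N$ has the $N$-fold parallel copy as its unique top diagram and that Theorem~\ref{lead} iterates cleanly over the components of a multicurve, including the bookkeeping of the power of $A$, together with the folded-triangle case in the admissibility check for $g$. The remaining steps are the standard "unique maximal lead diagram, hence no cancellation" argument.
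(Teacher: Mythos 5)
Your proof is correct. The paper itself gives no argument for this proposition --- it is imported from \cite{AF2} --- but your route (the lead diagram of $\tau(D)$ is $[Nf_D]$, the map $f\mapsto Nf$ is strictly order-preserving, and a unique maximal lead diagram cannot cancel) is exactly the filtration/lead-term machinery the paper relies on elsewhere, e.g.\ in Theorem \ref{lead}, Proposition \ref{threadedbasis}, Corollary \ref{chi} and Lemma \ref{nondegenerate}. The one point you flag as a possible obstacle is harmless: each component of a simple diagram meets some edge of the ideal triangulation (a curve missing all edges lies in an ideal triangle and bounds a disk), so the lower-order terms of $T_N$ really do contribute strictly smaller colorings, and the parity and triangle-inequality checks for $g=f_S/N$ go through as you say because $N$ is odd.
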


\section{Bases}\label{basis}
Our goal in this section is to prove a criterion that determines when a set of simple diagrams forms a local basis of $K_N(F)$.
We start by recalling some needed definitions from ring theory.

Rings are assumed to be associative with unit.  The {\bf center} $Z(R)$ of a ring $R$ is,
\begin{equation} Z(R)=\{a\in R| \forall r\in R \ ar=ra\}.\end{equation}  
Given a commutative ring $C$ contained in the center of a ring $R$ we say that $R$ is a {\bf central ring extension} of $C$. Ring  $R$ has {\bf no zero divisors} if for all $a,b\in R$, $ab=0$ implies $a=0$ or $b=0$. An element $a\in R$ is {\bf nilpotent} if for some $n$, $a^n=0$. Notice that if $R$ has no zero divisors, then it has no nilpotents. A ring $R$ is a {\bf division algebra} if for every nonzero  $a \in R$ there exists $b\in R$ with $ba=1$. 

Let $R'$ be any subring of the ring $R$ (not necessarily proper). If 
$S$ is a multiplicatively closed subset  of the center of $R'$ that does not contain $0$ then we can form the {\bf localization} of $R'$ at $S$, denoted $S^{-1}R'$, to be  the set of equivalence classes of pairs in $S\times R'$ defined as follows, 
\begin{equation}
S^{-1}R'= \{(a,s)\in R'\times S\}/\{(a,s)\sim (b,t)\iff at=bs\},
\end{equation}
 where addition and multiplication are given by
\begin{equation} [a,s]+[b,t]=[at+sb,st], \ \mathrm{and} \ [a,s][b,t]=[ab, st].\end{equation}
Note that any element of $S$ has a multiplicative inverse in the localization $S^{-1}R'$.  If $C\subset R$ is a central ring extension and $S\subset C$ then $S^{-1}C\subset S^{-1}R$.
If $S=\{c^k\}$, that is if the multiplicatively closed subset consists of the nonnegative powers of a central element $c\in R'$, then the localization is denoted by $R'_c$.

\begin{definition}  
Suppose that $C\subset R$ is a central extension and 
 $R$ is a free module over $C$.  We say that $\mathcal{B}=\{r_1,\ldots,r_k\}$ is a {\bf local basis} of $R$ if there exists $c\in C$ such  that the multiplicatively closed set $S=\{c^k\}$ does not contain zero, and the image of the elements of $\mathcal{B}$ in $R_c$ form a basis for $R_c$ over $C_c$.\end{definition}  
 For instance the Gaussian integers $\mathbb{Z}[{\bf i}]$ are a central ring extension of the integers  that is free of rank $2$. Notice that $\mathcal{B}=\{2,{\bf{i}}\}$ is a local basis. If we invert the powers of $2$, the image of  $\mathcal{B}$ is a basis of the resulting localization of the Gaussian integers over the the localization of the integers.

We say that a bilinear pairing $T:R\otimes_C R \rightarrow C$ is {\bf symmetric}
if $T(r\otimes s)=T(s\otimes r)$ for all $r,s\in R$. We say $T$ is {\bf  nondegenerate} if
for every nonzero $r\in R$ there exists $s\in R$ with $T(r\otimes s)\neq 0$.  We say the submodule $V\subset R$ {\bf exhausts} $T$ if for every nonzero $r\in R$ there exists $v\in V$ with $T(v\otimes r)\neq 0$.  

\begin{theorem} \label{exhaustbasis}Suppose that $C\subset R$ is a central extension and that $T:R\otimes_C R \rightarrow C$ is a nondegenerate symmetric bilinear pairing. Let $B=\{v_1,\ldots, v_n\}$ be a subset of $R$ and let $(g_{ij})$ be the matrix defined by
\begin{equation}  g_{ij}=(T(v_i\otimes v_j)).\end{equation}
If $B$ exhausts $T$, and $det(g_{ij})$ is not nilpotent in $C$, then $B$ is a local basis for $R$. \end{theorem}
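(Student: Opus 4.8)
The plan is to localize at $c:=\det(g_{ij})$ and to show that the images of $v_1,\dots,v_n$ then form a basis of $R_c$ over $C_c$. Since $c$ is not nilpotent, the multiplicatively closed set $S=\{c^k\}$ does not contain $0$, so the localizations $C_c\subseteq R_c$ are defined, and the pairing $T$ extends, by $C_c$-bilinearity, to a symmetric pairing $T\colon R_c\otimes_{C_c}R_c\to C_c$. The whole point of inverting $c$ is that the Gram matrix $G=(g_{ij})$, regarded over $C_c$, has invertible determinant, hence is invertible with $G^{-1}=c^{-1}\,\mathrm{adj}(G)$; once that is in place the argument is linear algebra over the commutative ring $C_c$.

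First I would establish linear independence of the $v_i$ over $C_c$. Given a relation $\sum_i a_iv_i=0$ in $R_c$ with $a_i\in C_c$, apply $T(v_j\otimes-)$ and use the symmetry of $G$ to obtain $\sum_i g_{ij}a_i=0$ for every $j$; invertibility of $G$ over $C_c$ then forces every $a_i=0$.

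Next, spanning. Given $r\in R_c$, put $b_j:=T(v_j\otimes r)$, let $(a_i):=G^{-1}(b_j)\in C_c^{\,n}$, and set $w:=\sum_i a_iv_i$. Then $T(v_j\otimes w)=\sum_i g_{ij}a_i=b_j=T(v_j\otimes r)$ for all $j$, so $s:=r-w$ satisfies $T(v_j\otimes s)=0$ for all $j$, and hence $T(v\otimes s)=0$ for every $v$ in the $C$-span $V$ of $B$. It remains to show $s=0$; this is exactly where the hypothesis that $B$ exhausts $T$ is used. Write $s=r'/c^k$ with $r'\in R$. From $T(v_j\otimes r')/c^k=0$ in $C_c$ one gets $c^{m}\,T(v_j\otimes r')=0$ in $C$ for a single exponent $m$ that works for all $j$; by $C$-bilinearity this says $T(v_j\otimes c^mr')=0$ for all $j$, hence $T(v\otimes c^mr')=0$ for all $v\in V$, with $c^mr'\in R$. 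Since $B$ exhausts $T$, this forces $c^mr'=0$ in $R$, and therefore $s=r'/c^k=0$ in $R_c$ because $c$ is invertible there. Hence $r=w\in\sum_iC_cv_i$, so $B$ spans $R_c$ over $C_c$; together with the previous step, $B$ is a $C_c$-basis of $R_c$, i.e.\ a local basis for $R$.

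I expect the only real subtlety to be that last step---carrying the exhausting hypothesis, which is a statement about honest elements of $R$, across the localization to conclude that $s\in R_c$ vanishes. The clearing-of-denominators argument above handles it, but one must choose the single exponent $m$ so that $c^m$ annihilates all $n$ of the scalars $T(v_j\otimes r')$ simultaneously, and one must use genuine $C$-bilinearity of $T$ (not merely additivity) to absorb $c^m$ into the second slot of the pairing. Note also that nondegeneracy of $T$ by itself is not invoked: the exhausting property is the stronger hypothesis, and it is the only one the proof uses.
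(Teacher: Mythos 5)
Your proof is correct and follows essentially the same route as the paper: invert $c=\det(g_{ij})$, use the inverse Gram matrix to get independence and to produce, for each $r$, a combination $w$ of the $v_i$ with $r-w$ pairing to zero against all of $B$, then invoke exhaustion. Your clearing-of-denominators step merely makes explicit a detail the paper elides (applying the exhaustion hypothesis, stated for elements of $R$, to an element of $R_c$), and your observation that nondegeneracy is not separately needed is consistent with the paper's argument.
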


\proof  
Note first that if $c=det(g_{ij})$ is not nilpotent in $C$ then we can localize to form $C_c$ and  $R_c$,
and the matrix $g=(g_{ij})$ is invertible in $M_n(C_c)$. Define $g^{ij}\in C_c$ to be the entry in the $ith$ row and $j$th column of $g^{-1}$. Let $w_j=\sum_ig^{ij}v_i$.  Since $gg^{-1}$ 
is the identity matrix we have $T(v_i\otimes w_j)=\delta_i^j$, (Kronecker's delta).

We need to show that $B$ is a basis for $R_c$ as a module over $C_c$. This means showing that it is independent and it spans.  Since $det(g)\neq 0$, the set $B$ is independent.
If $w\in R$, consider $w-\sum_jT(v_j\otimes w)w_j$.  Notice that $w-\sum_jT(v_j\otimes w)w_j$ pairs to zero with every $v_i$. Since the set $B$ is exhaustive this implies that $w-\sum_i T(v_j\otimes w_j)w_j=0$, but that means $B$ spans.  \qed

 Bringing this back to the current discussion, we want to apply the Theorem \ref{exhaustbasis} in the case when we work with the Kauffman Bracket skein algebra of a surface. We need to know that it has no nilpotents. This follows from the following more general fact.
  \begin{prop}
 Let F be an oriented finite type surface. The algebra $K_N(F)$ has no zero divisors.

 \end{prop}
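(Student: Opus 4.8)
The plan is to exploit the theory of leading terms developed earlier, in particular Theorem \ref{lead}, which says that the lead term of a product $\alpha*\beta$ of nonzero skeins is $A^n a b [f_S + f_{S'}]$ where $aS = ld(\alpha)$ and $bS' = ld(\beta)$. First I would observe that for nonzero skeins $\alpha,\beta \in K_N(F)$, the coefficients $a,b$ of their lead terms are nonzero complex numbers, so $A^n ab \neq 0$ since $A$ is a root of unity (hence a unit) and $\mathbb{C}$ is a field. Therefore the lead term of $\alpha*\beta$ is a nonzero multiple of the simple diagram $[f_S+f_{S'}]$; in particular $\alpha*\beta$ has a nonzero lead term and hence $\alpha*\beta \neq 0$. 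This immediately gives that $K_N(F)$ has no zero divisors.

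The one point that requires care is the existence of the lead term itself: Theorem \ref{lead} is stated for nonzero skeins, so I should note that the well-ordering of admissible colorings from Definition \ref{wellorder} guarantees that any nonzero skein, being a finite nonzero linear combination of simple diagrams (Proposition \ref{simplediag}), has a well-defined largest simple diagram with nonzero coefficient, i.e. a well-defined lead term in the sense of Definition \ref{leadterm}. So the argument reduces entirely to: lead terms exist and multiply as described, and the product of two nonzero complex numbers times a unit is nonzero.

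I expect no serious obstacle here — the heavy lifting was already done in \cite{AF2} in establishing Theorem \ref{lead}. The only subtlety is purely bookkeeping: to apply Theorem \ref{lead} one must fix an ideal triangulation of $F$ and an ordering of its edges, which is possible precisely because $F$ has finite type and negative Euler characteristic (the hypothesis on $F$); one should remark that the conclusion (no zero divisors) is intrinsic to the algebra $K_N(F)$ and does not depend on these auxiliary choices. Writing it out:

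\proof Fix an ideal triangulation of $F$ with edge set $E$ and an ordering of $E$ as in Definition \ref{wellorder}; this is possible since $F$ has negative Euler characteristic. By Proposition \ref{simplediag} every nonzero skein is a finite nonzero $\mathbb{C}$-linear combination of isotopy classes of simple diagrams, and since the admissible colorings are well ordered, such a skein has a well-defined lead term $ld(\alpha) = a S$ with $a \in \mathbb{C}^*$ and $S$ a simple diagram, in the sense of Definition \ref{leadterm}. Now let $\alpha, \beta \in K_N(F)$ be nonzero, with $ld(\alpha) = a S$ and $ld(\beta) = b S'$, $a,b \in \mathbb{C}^*$. By Theorem \ref{lead} there is an integer $n$ with $ld(\alpha*\beta) = A^n a b\, [f_S + f_{S'}]$. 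Since $A = e^{\pi \mathbf{i}/N}$ is a root of unity, $A^n \neq 0$, and since $a, b \in \mathbb{C}^*$, the coefficient $A^n a b$ is a nonzero complex number. Hence $\alpha * \beta$ has a nonzero lead term, so $\alpha * \beta \neq 0$. Therefore $K_N(F)$ has no zero divisors. \qed
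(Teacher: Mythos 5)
Your proposal is correct and is essentially the paper's own argument: the paper proves this proposition in one line by citing Theorem \ref{lead}, noting that the lead term of a product of two nonzero skeins cannot cancel, which is exactly the reasoning you spell out in detail. The extra bookkeeping you include (existence of lead terms via the well-ordering, nonvanishing of $A^n ab$) is just an expansion of the same idea.
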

 \proof
 This  follows immediately from the fact that the lead term of a product of two skeins cannot cancel with any other term, see Theorem  
 \ref{lead}.
 \qed
 
 We  now describe a nondegenerate symmetric bilinear pairing,
\begin{equation} T:K_N(F)\otimes_{\chi(F)} K_N(F)\rightarrow \chi(F). \end{equation}

Any simple diagram $D$ on a surface $F$ is made up of collections of parallel simple closed curves. If the simple diagram consists of $k_i$ parallel copies of the simple closed curves $C_i$, then the diagram that is  the union of the single copies of each  $C_i$ is primitive.  We can then form the skein $\prod_iT_{k_i}(C_i)$ which we call a {\bf threaded primitive diagram}.  The lead term of $\prod_iT_{k_i}(C_i)$ is the simple diagram $D$. 
This gives rise to the following consequence of Proposition \ref{simplediag}.
\begin{prop}\label{threadedbasis}
The set of threaded primitive diagrams is a basis for $K_N(F)$ over $\mathbb{C}$. 
\end{prop}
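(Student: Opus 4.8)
The plan is to prove Proposition \ref{threadedbasis} by combining two observations: first, that threaded primitive diagrams are in bijection with simple diagrams via the lead term; and second, that a linear change of basis governed by a unitriangular transition matrix preserves the basis property. Concretely, I would fix a choice of ordering of the edges $E$ of an ideal triangulation so that the set of isotopy classes of simple diagrams is well ordered as in Definition \ref{wellorder}, and recall from Proposition \ref{simplediag} that the simple diagrams form a $\mathbb{C}$-basis of $K_N(F)$.

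First I would make precise the correspondence $D \mapsto \prod_i T_{k_i}(C_i)$. Given a simple diagram $D$ consisting of $k_i$ parallel copies of each curve $C_i$ in a primitive diagram $\bigcup_i C_i$, I would verify, using the product-to-sum formula for Chebyshev polynomials together with the recursion $T_k = xT_{k-1}-T_{k-2}$, that $\prod_i T_{k_i}(C_i)$ expands as $D$ plus a $\mathbb{C}$-linear combination of simple diagrams each of which is strictly smaller than $D$ in the well ordering (these smaller diagrams arise because $T_{k_i}(C_i)$ equals $k_i$ parallel copies of $C_i$ plus terms with fewer parallel copies, and multiplying such lower terms together — using disjointness of the $C_i$, so that the product is genuine juxtaposition — yields simple diagrams with strictly fewer total curves, hence strictly smaller colorings). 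Thus $\mathrm{ld}\!\left(\prod_i T_{k_i}(C_i)\right) = D$, as asserted in the statement just before the proposition. In particular, distinct simple diagrams $D$ give threaded primitive diagrams with distinct lead terms, so the assignment is a bijection from isotopy classes of simple diagrams onto the set of threaded primitive diagrams.

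Next I would conclude the basis claim by a triangularity argument. Enumerate the simple diagrams $D_1 < D_2 < \cdots$ in the well ordering (restricted to any finite subset we care about, since everything is finitely supported). The threaded primitive diagram $P_j$ associated to $D_j$ satisfies $P_j = D_j + \sum_{\ell < j} c_{\ell j} D_\ell$ for scalars $c_{\ell j} \in \mathbb{C}$, so the transition matrix from $\{D_j\}$ to $\{P_j\}$ is unitriangular (lower triangular with ones on the diagonal) with respect to the well ordering. Such a matrix is invertible over $\mathbb{C}$ with a unitriangular inverse, so each $D_j$ is a $\mathbb{C}$-linear combination of the $P_\ell$ and vice versa; hence $\{P_j\}$ spans $K_N(F)$ and is linearly independent, i.e. it is a $\mathbb{C}$-basis.

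The main obstacle, and the only point requiring genuine care, is the claim that $\prod_i T_{k_i}(C_i)$ has lead term exactly $D$ with all other contributing simple diagrams strictly smaller. The subtlety is that the lower-order terms of the individual factors $T_{k_i}(C_i)$ are supported on proper sub-multi-curves of the $C_i$'s, and one must check that forming their (disjoint) products never produces a simple diagram as large as $D$ — this uses that removing parallel copies of a curve strictly decreases the associated admissible coloring in the lexicographic order, together with monotonicity of the well ordering under the monoid operation on admissible colorings noted after Definition \ref{wellorder}. Once this is established the rest is formal. This argument does not require an ideal triangulation in an essential way — one could instead order simple diagrams by total geometric complexity — but invoking Definition \ref{wellorder} keeps the bookkeeping consistent with the surrounding text.
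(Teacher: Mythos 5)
Your proposal is correct and follows essentially the same route as the paper: the paper proves spanning by an explicit inductive rewriting procedure (subtract the threaded primitive diagram matching the lead term and repeat), and independence from the fact that the lead terms form a basis, which is exactly your unitriangular transition-matrix argument in algorithmic form. Your extra care in justifying that $\mathrm{ld}\bigl(\prod_i T_{k_i}(C_i)\bigr)=D$ via the Chebyshev recursion and disjointness of the $C_i$ fills in a step the paper asserts without proof just before the proposition, but introduces no new idea.
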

\proof
 There is an inductive process for rewriting a linear combination of simple diagrams as a linear combination of threaded primitive diagrams.  Given a skein $\alpha$ its lead term is a simple diagram $D$. Let $\prod_iT_{k_i}(C_i)$ be the corresponding threaded primitive diagram.
Subtract and add $\prod_iT_{k_i}(C_i)$ from $\alpha$ and let $D$ cancel with the lead term of the subtracted $\prod_iT_{k_i}(C_i)$. Now you have rewritten the skein so that largest term that isn't rewritten in terms of threaded primitive diagrams is smaller. Do the same with the next largest term. This process can be continued until the skein has been written as a linear combination of threaded primitive diagrams. The threaded primitive diagrams are linearly independent as their lead terms are a basis for $K_N(F)$. \qed

 \begin{cor}\label{chi} If the lead term of  the skein $\alpha\in K_N(F)$ is a simple diagram whose residue is  $\vec{0}$, then the lead term of the skein written as a linear combination of threaded primitive diagrams is in $\chi(F)$. \end{cor}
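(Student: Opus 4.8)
The plan is to reduce the corollary to one structural fact: the simple diagram underlying $ld(\alpha)$ has all of its curve multiplicities divisible by $N$. Granting that, the conclusion is immediate. Indeed, if $ld(\alpha)=zD$ in the simple-diagram basis, with $z\in\mathbb{C}^{*}$ and $D$ consisting of $k_i$ parallel copies of the curves $C_i$ of some primitive diagram, then the rewriting procedure in the proof of Proposition \ref{threadedbasis} shows that the lead term of $\alpha$ expressed in the threaded primitive basis is $zP$, where $P=\prod_i T_{k_i}(C_i)$ is the threaded primitive diagram with $ld(P)=D$: it is the unique threaded primitive diagram with that lead term, and $\alpha-zP$ has strictly smaller lead term. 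If each $k_i=Nq_i$, then, using that $\tau$ is an algebra homomorphism with $\tau(C_i)=T_N(C_i)$ together with the multiplicative property $T_a(T_b(x))=T_{ab}(x)$, we get $P=\prod_i T_{Nq_i}(C_i)=\prod_i \tau(T_{q_i}(C_i))=\tau\!\left(\prod_i T_{q_i}(C_i)\right)\in\chi(F)$, hence $zP\in\chi(F)$ as well.

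To establish the structural fact I would first write down a convenient $\mathbb{C}$-basis of $\chi(F)$. Applying Proposition \ref{threadedbasis} to $K_1(F)$, the threaded primitive diagrams $\prod_i T_{k_i}(C_i)$ form a $\mathbb{C}$-basis of $K_1(F)$; since $\tau$ is injective (Theorem \ref{thread}), their images form a basis of $\chi(F)$, and the computation $\tau(T_{k_i}(C_i))=T_{k_i}(T_N(C_i))=T_{Nk_i}(C_i)$ identifies this basis with the set of threaded primitive diagrams all of whose multiplicities are divisible by $N$. The assignment $Q\mapsto ld(Q)$ from threaded primitive diagrams to simple diagrams is a bijection with leading coefficient $1$, so distinct basis elements of $\chi(F)$ have distinct lead simple diagrams; writing any nonzero $\beta\in\chi(F)$ in this basis, the unique largest lead simple diagram occurring sits strictly above every other term, so its coefficient survives and it equals the simple diagram underlying $ld(\beta)$, which therefore has all multiplicities divisible by $N$.

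Finally I would feed in the residue hypothesis via the Proposition of \cite{AF2} recalled above: a simple diagram $D$ has $res(f_D)=\vec 0$ if and only if it is the lead term, in the simple-diagram basis, of some $\beta\in\chi(F)$. Since $res(ld(\alpha))=\vec 0$ by hypothesis, the simple diagram $D$ underlying $ld(\alpha)$ is such a lead term, and by the previous paragraph all multiplicities of $D$ are divisible by $N$. This is exactly the structural fact used in the first paragraph, so the proof is complete.

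The only step that is more than bookkeeping is the implication "$res(f_D)=\vec 0\Rightarrow$ every curve of $D$ occurs with multiplicity divisible by $N$"; this is where the cited Proposition from \cite{AF2} and the explicit basis of $\chi(F)$ (resting on injectivity of $\tau$ and the multiplicative identity for Chebyshev polynomials) do the real work. Everything else — that $zP$ is literally the lead term of $\alpha$ in the threaded primitive basis, and that $P$ lies in $\chi(F)$ once its multiplicities are known to be divisible by $N$ — follows from the triangularity, with respect to the well-ordering of Definition \ref{wellorder}, of the change of basis between simple diagrams and threaded primitive diagrams.
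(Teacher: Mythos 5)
Your proof is correct and follows the same route as the paper's: the paper's two-sentence argument ("the rewriting process produces an element of $\chi(F)$ as the lead term; subsequent steps produce lower order terms that cannot cancel with it") is exactly your combination of the structural fact that $res(f_D)=\vec 0$ forces all multiplicities of $D$ to be divisible by $N$ (via the cited Proposition from \cite{AF2} and the identity $\tau(T_q(C))=T_{Nq}(C)$) with the triangularity of the change of basis. You have simply made explicit the justification the paper leaves implicit.
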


\proof The rewriting process produces an element of $\chi(F)$ as the lead term. Subsequent steps produce lower order terms that cannot cancel with it. \qed

In \cite{AF2} a map $Tr:K_N(F)\rightarrow \chi(F)$ is defined by writing a skein as a complex linear combination of threaded primitive diagrams using Proposition \ref{threadedbasis}.  Erase any term, where there is a curve threaded by $T_k(x)$ where $N$ doesn't divide $k$.  To be more specific,  suppose
\begin{equation}\alpha=\sum_{k_1,\ldots,k_n}\alpha_{k_1,\ldots,k_n}\prod_iT_{k_i}(C_i)\end{equation}
where the $\alpha_{k_1,\ldots,k_n}\in \mathbb{C}$, and the $\prod_iT_{k_i}(C_i)$ are threaded primitive diagrams, then $Tr(\alpha)$ is the sum of all the terms where $N|k_i$ for all $i$. 

The map $Tr:K_N(F)\rightarrow \chi(F)$ has the following properties:
\begin{itemize}
\item It is $\chi(F)$-linear; 
\item It is cyclic, in the sense that $Tr(\alpha*\beta)=Tr(\beta*\alpha)$; 
\item It satisfies $Tr(1)=1$; 
\item  It is nondegenerate in the sense that for any $\alpha \in K_N(F)$  if $\alpha\neq 0$ then there exists $\beta \in K_N(F)$ with $Tr(\alpha*\beta)\neq 0$.  
\end{itemize}

\begin{prop}
The Kauffman bracket skein algebra $K_N(F)$ is a finite rank module over $\chi(F)$,
and the pairing 
\begin{equation} \label{tracepairing}
T:K_N(F)\otimes_{\chi(F)}K_N(F)\rightarrow \chi(F) \end{equation}
given by $T(\alpha\otimes \beta)=Tr(\alpha*\beta)$ is symmetric and nondegenerate.
\end{prop}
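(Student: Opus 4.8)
The plan is to prove the three claims in sequence, reducing each to facts already established in the excerpt. For finite rank, I would invoke Corollary \ref{finite} directly: the skeins $T_{k_1}(J_1)*\cdots*T_{k_n}(J_n)$ with $k_i\in\{0,\ldots,N-1\}$ span $K_N(F)$ over $\chi(F)$, so $K_N(F)$ is finitely generated over $\chi(F)$; since $\chi(F)$ is an integral domain (it is the coordinate ring of the character variety, or alternatively $K_N(F)$ has no zero divisors so neither does its subring) and $K_N(F)$ is a torsion-free $\chi(F)$-module with a finite spanning set, it has a well-defined finite rank. Symmetry of $T$ is immediate from the cyclicity of $Tr$: $T(\alpha\otimes\beta)=Tr(\alpha*\beta)=Tr(\beta*\alpha)=T(\beta\otimes\alpha)$. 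Bilinearity over $\chi(F)$ follows from $\chi(F)$-linearity of $Tr$ together with the fact that elements of $\chi(F)=\tau(K_1(F))$ are central in $K_N(F)$ (Theorem \ref{thread}), so scalars can be moved freely in and out of either slot.

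The substantive point is nondegeneracy, and here I would essentially transcribe the last listed property of $Tr$: given a nonzero $\alpha\in K_N(F)$, there exists $\beta$ with $Tr(\alpha*\beta)\neq 0$, i.e. $T(\alpha\otimes\beta)\neq 0$. So the real work is justifying that bulleted property of $Tr$ itself, in case the excerpt intends it to be (re)proved here rather than merely cited. My approach to that would be via lead terms. Let $\alpha\neq 0$ and write it in the threaded-primitive-diagram basis (Proposition \ref{threadedbasis}); let $z[f]$ with $z\in\mathbb{C}^*$, $f\in\mathcal{A}$ be its lead term. Using the surjectivity of $res:\mathcal{A}\to\mathbb{Z}_N^E$, choose an admissible coloring $h$ whose residue is $res(f)$-complementary, meaning $res(f)+res(h)=\vec 0$ in $\mathbb{Z}_N^E$; let $\beta$ be a threaded primitive diagram with lead term $[h]$. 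By Theorem \ref{lead}, the lead term of $\alpha*\beta$ is $A^m z z'[f+f_h]$ for some integer $m$, and $res(f+f_h)=\vec 0$, so by Corollary \ref{chi} the lead term of $\alpha*\beta$, when rewritten in the threaded primitive basis, lands in $\chi(F)$ — concretely it is a product of $T_N$-threaded curves — and hence survives the erasure defining $Tr$. One must check this leading contribution to $Tr(\alpha*\beta)$ is not cancelled by lower-order terms: since lead terms of products cannot cancel (Theorem \ref{lead}) and the rewriting in Corollary \ref{chi} only produces strictly lower-order corrections, the lead term of $Tr(\alpha*\beta)$ equals the lead term of $\alpha*\beta$, which is nonzero. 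Therefore $Tr(\alpha*\beta)\neq 0$.

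I expect the main obstacle to be the bookkeeping around the erasure map and the interaction between "being a threaded primitive diagram whose residue is $\vec 0$" and "being an element of $\chi(F)$": one needs that a threaded primitive diagram $\prod_i T_{k_i}(C_i)$ with every $k_i$ divisible by $N$ actually lies in the image of the threading map $\tau$, which is the content tying Corollary \ref{chi} together with the definition of $\chi(F)$ and the multiplicative property $T_{k}(x)=T_{N}(T_{k/N}(x))$-type identity for Chebyshev polynomials; this is where DeMoivre's formula and the multiplicative property of the $T_k$ are used. A secondary subtlety is choosing $\beta$ so that $res(f_\beta)$ is exactly the additive inverse of $res(f)$ in $\mathbb{Z}_N^E$ while remaining admissible — but that is handled verbatim by the surjectivity proposition for $res$, applied to the tuple $(-f(a)\bmod N)_{a\in E}$. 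Everything else — symmetry, bilinearity, finiteness — is a direct consequence of results already in hand, so no separate calculation is needed there.
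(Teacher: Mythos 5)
Your proposal is correct and follows essentially the same route as the paper: finite rank is quoted from Corollary \ref{finite}, symmetry comes from cyclicity of $Tr$, and nondegeneracy is exactly the paper's Lemma \ref{nondegenerate} — pair $\alpha$ with a skein of complementary residue so that the lead term of the product has residue $\vec{0}$, lands in $\chi(F)$ by Corollary \ref{chi}, and survives the trace because lead terms cannot cancel with lower-order terms. Your extra remarks on surjectivity of $res$ and on why $T_N$-threaded curves lie in the image of $\tau$ only make explicit steps the paper leaves implicit.
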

\proof The fact that  $K_N(F)$ is a finite rank module over $\chi(F)$ follows from the Corollary \ref{finite}. Since the  map $Tr$ is cyclic,   the pairing $T$ is symmetric. We re-iterate that it is non-degenerate in the Lemma \ref{nondegenerate} below. \qed 

We say two skeins $\alpha$ and $\beta$ have {\bf complementary residues} if the residues of their lead terms sum to zero in $\mathbb{Z}_N^E$.

\begin{lemma} \label{nondegenerate} If $\alpha$ and $\beta$ are skeins such that the residues of their lead terms are complementary, then $Tr(\alpha*\beta)\neq 0$. \end{lemma}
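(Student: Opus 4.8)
The plan is to reduce the claim to a concrete computation with threaded primitive diagrams by combining Theorem \ref{lead} with Corollary \ref{chi}. First I would pass to lead terms. Write $ld(\alpha)=a[f]$ and $ld(\beta)=b[f']$ with $a,b\in\mathbb{C}^*$ and $f,f'\in\mathcal{A}$; the hypothesis is that $res(f)+res(f')=\vec 0$ in $\mathbb{Z}_N^E$. By Theorem \ref{lead} the lead term of $\alpha*\beta$ is $A^nab[f+f']$ for some integer $n$, and in particular this lead term does not cancel against anything (this is exactly the no-zero-divisors argument). Now observe that $res(f+f')=res(f)+res(f')=\vec 0$, so by Corollary \ref{chi}, when we rewrite $\alpha*\beta$ in the threaded primitive basis, its lead term is an element of $\chi(F)$, namely a nonzero scalar multiple of the threaded primitive diagram $\prod_i T_{N\ell_i}(C_i)$ obtained from $[f+f']$ — here each multiplicity $k_i$ of $[f+f']$ is divisible by $N$ precisely because the residue vanishes, so $k_i=N\ell_i$.

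The second step is to show this lead term survives the projection $Tr$. By definition $Tr$ keeps exactly those threaded primitive terms $\prod_i T_{k_i}(C_i)$ for which $N\mid k_i$ for all $i$, and discards the rest. The lead term of $\alpha*\beta$ just identified has all multiplicities divisible by $N$, so $Tr$ does not erase it. The remaining terms in the threaded-primitive expansion of $\alpha*\beta$ are strictly smaller in the well ordering; since the map $Tr$ either keeps a term unchanged or deletes it, $Tr(\alpha*\beta)$ is a linear combination of threaded primitive diagrams whose largest surviving term is precisely this nonzero multiple of $\prod_i T_{N\ell_i}(C_i)$ — nothing of higher or equal order can appear to cancel it, because the full expansion had this as its unique lead term. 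Hence $Tr(\alpha*\beta)\neq 0$.

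The step I expect to require the most care is making precise the claim that ``the lead term in the simple-diagram expansion, when $res=\vec 0$, is carried to a genuine nonzero summand of $Tr$'' — i.e. verifying that the rewriting process of Proposition \ref{threadedbasis} does not produce a competing term of the same or larger order once we apply $Tr$, and that $Tr$ of a threaded primitive diagram with all multiplicities divisible by $N$ is itself (nonzero) rather than something that could be cancelled by $Tr$ of other terms. Concretely one must check: (i) the threaded primitive diagram attached to $[f+f']$ is the unique maximal term of the expansion (immediate from Theorem \ref{lead} plus Proposition \ref{threadedbasis}); (ii) all its curve-multiplicities are multiples of $N$ iff $res(f+f')=\vec 0$, which is where the hypothesis is used; and (iii) $Tr$ acts as the identity on that term and as deletion or order-lowering on the others, so no cancellation into the top degree can occur. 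Once these are in place the conclusion is immediate, and this also re-establishes the nondegeneracy of $T$ asserted in the previous proposition, since for any nonzero $\alpha$ one may choose $\beta$ (for instance a threaded primitive diagram) whose lead term has residue complementary to that of $\alpha$.
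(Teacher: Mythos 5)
Your proposal is correct and follows essentially the same route as the paper's proof: use Theorem \ref{lead} to see that the lead term of $\alpha*\beta$ has residue $\vec{0}$, invoke Corollary \ref{chi} to conclude that the lead term in the threaded primitive expansion lies in $\chi(F)$ (equivalently has all multiplicities divisible by $N$), and observe that $Tr$ preserves this term while nothing of lower order can cancel it. Your points (i)--(iii) are exactly the details the paper compresses into its three-sentence argument, with (ii) being precisely the content of the cited proposition from \cite{AF2}.
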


\proof When you expand the product $\alpha*\beta$  as a linear combination of simple diagrams the lead term has residue $\vec{0}$. By Corollary \ref{chi} when we expand $\alpha*\beta$  as a linear combination of threaded primitive diagrams, its lead term is in $\chi(F)$. When we take the trace that term persists since a lead term cannot cancel with anything smaller. \qed

\begin{theorem}[Local Basis Theorem]\label{localbasis} Let $F$ be an oriented surfae of finite type with Euler characteristic $e$. Let $E$ be the set of edges in a given ideal triangulation of $F$. If $\mathcal{B}=\{S_1,\ldots S_{-3e}\}$ is a collection of diagrams on $F$  such that their residues form a basis for $\mathbb{Z}_N^E$, then the set  of skeins 
\begin{equation} 
\mathcal{C} = \{
T_{k_1}(S_1)*T_{k_2}(S_2)*\ldots *T_{k_{-3e}}(S_{-3e})\ | \ 0\leq k_i \leq N-1
\}
\end{equation} is a local basis for $K_N(F)$ over $\chi(F)$.  \end{theorem}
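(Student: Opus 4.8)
The plan is to deduce the Local Basis Theorem from the abstract Theorem~\ref{exhaustbasis} applied to the trace pairing $T$ of \eqref{tracepairing}, so the work reduces to verifying the two hypotheses for the specific set $\mathcal{C}$: that $\mathcal{C}$ exhausts $T$, and that $\det(g_{ij})$ with $g_{ij}=Tr(c_i*c_j)$ is not nilpotent in $\chi(F)$. Throughout I will write the elements of $\mathcal{C}$ as $c_{\vec k}=T_{k_1}(S_1)*\cdots*T_{k_{-3e}}(S_{-3e})$ for $\vec k\in\{0,\dots,N-1\}^{-3e}$, indexed by the finite set $I=\{0,\dots,N-1\}^{-3e}$.

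First I would compute the residue of the lead term of $c_{\vec k}$. By the product-to-sum formula, $T_{k_i}(S_i)$ has lead term $[k_i f_{S_i}]$ (the simple diagram consisting of $k_i$ parallel copies of $S_i$), so by Theorem~\ref{lead} the lead term of $c_{\vec k}$ is $A^n[\,k_1 f_{S_1}+\cdots+k_{-3e} f_{S_{-3e}}\,]$ and hence $res(c_{\vec k})=\sum_i k_i\, res(S_i)\in\mathbb{Z}_N^E$. Since by hypothesis the residues $res(S_1),\dots,res(S_{-3e})$ form a $\mathbb{Z}_N$-basis of $\mathbb{Z}_N^E$ — note $|E|=-3e$, so this makes sense — the map $\vec k\mapsto res(c_{\vec k})$ is a bijection from $I=\mathbb{Z}_N^{-3e}$ onto $\mathbb{Z}_N^E$. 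This is the crucial structural fact: the residues of the lead terms of the skeins in $\mathcal{C}$ hit every element of $\mathbb{Z}_N^E$ exactly once.

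With that in hand, exhaustion is immediate: given any nonzero $r\in K_N(F)$ with $res(r)=\rho\in\mathbb{Z}_N^E$, pick the unique $\vec k$ with $res(c_{\vec k})=-\rho$; then $c_{\vec k}$ and $r$ have complementary residues, so $T(c_{\vec k}\otimes r)=Tr(c_{\vec k}*r)\neq 0$ by Lemma~\ref{nondegenerate}. For the determinant, I would argue that the Gram matrix $g=(Tr(c_{\vec j}*c_{\vec k}))$ is, after the above reindexing, essentially ``anti-diagonal'': $Tr(c_{\vec j}*c_{\vec k})$ can only be nonzero when $res(c_{\vec j})+res(c_{\vec k})=\vec 0$, i.e.\ when $\vec k$ is the unique partner $\vec j^{\,*}$ of $\vec j$; for that pairing Lemma~\ref{nondegenerate} gives $Tr(c_{\vec j}*c_{\vec j^{\,*}})\neq 0$, and moreover by Corollary~\ref{chi} its lead term is a genuine element of $\chi(F)$, in particular nonzero. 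Hence up to a sign (the permutation $\vec j\mapsto\vec j^{\,*}$ is an involution) $\det(g)=\prod_{\vec j} Tr(c_{\vec j}*c_{\vec j^{\,*}})$ — or a product over orbits — and this is a product of nonzero elements of $\chi(F)$. Since $\chi(F)\subset K_N(F)$ has no zero divisors, the product is nonzero, hence not nilpotent, and Theorem~\ref{exhaustbasis} applies to give that $\mathcal{C}$ is a local basis for $K_N(F)$ over $\chi(F)$.

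The main obstacle is the vanishing claim ``$Tr(c_{\vec j}*c_{\vec k})=0$ unless the residues are complementary,'' which makes $g$ block-anti-diagonal and lets the determinant factor. This requires understanding how $Tr$ interacts with residues of lead terms: expanding $c_{\vec j}*c_{\vec k}$ in threaded primitive diagrams, every term whose curve-multiplicities are not all divisible by $N$ is erased by $Tr$, and one must check that the surviving $\chi(F)$-part is zero whenever $res(c_{\vec j})+res(c_{\vec k})\neq\vec 0$ — intuitively because a threaded primitive diagram lying in $\chi(F)$ has residue $\vec 0$, while $res$ of each simple-diagram summand of the product is a fixed coset representative determined by $res(c_{\vec j})+res(c_{\vec k})$. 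I would either establish this directly from the structure of $Tr$ and the behavior of residues under the rewriting process of Proposition~\ref{threadedbasis}, or — if the off-anti-diagonal entries are not literally zero — fall back on a triangularity/leading-term argument showing $\det(g)$ still has a nonzero lead term equal to the product of the lead terms of the anti-diagonal entries, which suffices since a nonzero lead term forces a nonzero (hence non-nilpotent) element in the domain $\chi(F)$.
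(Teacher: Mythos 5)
Your reduction to Theorem \ref{exhaustbasis}, your computation of the residues of the lead terms of the $c_{\vec{k}}$, and your exhaustion argument via Lemma \ref{nondegenerate} are exactly what the paper does. The gap is in how you verify the second hypothesis of Theorem \ref{exhaustbasis}. Your primary route rests on the claim that $Tr(c_{\vec{j}}*c_{\vec{k}})=0$ unless the residues of $c_{\vec{j}}$ and $c_{\vec{k}}$ are complementary, and that claim is false. Take $\vec{j}=\vec{k}$ with a single nonzero entry $k_i=k$, so that $c_{\vec{j}}=c_{\vec{k}}=T_k(S_i)$. By the product-to-sum formula $T_k(S_i)*T_k(S_i)=T_{2k}(S_i)+T_0(S_i)=T_{2k}(S_i)+2$; since $N$ is odd and $0<k<N$, $N$ does not divide $2k$, so the trace erases $T_{2k}(S_i)$ and returns $2\neq 0$, even though $2\,res(S_i)\neq\vec{0}$. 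So the Gram matrix is not anti-diagonal and the determinant does not factor as you propose; Lemma \ref{nondegenerate} is only a one-way implication and cannot supply the vanishing you need.

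Your fallback (leading-term domination) can in fact be carried out: for complementary pairs the lead term of $c_{\vec{j}}*c_{\vec{k}}$ survives the trace (Corollary \ref{chi}), for non-complementary pairs it is erased so $ld(Tr(c_{\vec{j}}*c_{\vec{k}}))$ is strictly smaller than the lead term of $c_{\vec{j}}*c_{\vec{k}}$, and since the lexicographic order on colorings is translation-invariant and the total coloring $\sum_{\vec{j}}(f_{\vec{j}}+f_{\sigma(\vec{j})})$ is independent of the permutation $\sigma$, the Leibniz term indexed by the complementary involution strictly dominates all others. But this is genuine work that your sketch does not do, and it is unnecessary. The paper avoids analyzing the entries of $g$ altogether: once you know that $\mathcal{C}$ exhausts the nondegenerate symmetric pairing $T$ and that $\mathcal{C}$ is linearly independent over $\chi(F)$ (which follows from the proposition quoted from \cite{AF2}, because the lead terms of the $c_{\vec{k}}$ have distinct residues), any kernel vector of $g$ over the fraction field of $\chi(F)$ would yield either a nonzero element orthogonal to all of $\mathcal{C}$ or a nontrivial dependence relation, both impossible; hence $\det(g)\neq 0$, and non-nilpotence is automatic because $\chi(F)$ is an integral domain. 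Replacing your structural analysis of $g$ with that observation closes the gap.
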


\proof The proof is an application of  Theorem \ref{exhaustbasis}. Recall that the number of edges in an ideal triangulation of a surface of Euler characteristic $e$ is equal to $-3e$.
 We will prove that the set $\mathcal{C}$  exhausts $T$, where $T$ is given by (\ref{tracepairing}). The nondegeneracy of $T$ then implies that the rank of  the matrix $g$, whose entries are all traces of pairwise products of elements of $\mathcal{C}$,  is equal to $N^{|E|}=N^{-3e}$. Since the matrix $g$ has full rank, its determinant is not zero. Since $\chi(F)$ has no zero divisors this implies that the determinant of $g$ is not nilpotent, which completes the proof.

Let $\alpha$ be an arbitrary skein. The residue of its lead term is the tuple $(k_i)\in \{0,\ldots,N-1\}^E$. Its complementary tuple $(l_i)$ satisfies  $(k_i)+(l_i)=\vec{0}$. As the residues of  $\mathcal{B}$ form a basis of $\mathbb{Z}_N^E$ there exists a tuple $b_i\in \{0,\ldots,N-1\}^E$ such that the lead term of \begin{equation} T_{b_1}(S_1)*\ldots *T_{b_{-3e}}(S_{-3e})\end{equation} has its residue equal to $(l_i)$.  Hence the lead term of \begin{equation}\alpha*(T_{b_1}(S_1)*\ldots *T_{b_{-3e}}(S_{-3e}))\end{equation} is divisible by $N$ which means that the lead term of this skein written in terms of the threaded basis is in $\chi(F)$.  This implies that  \begin{equation}T(\alpha\otimes T_{b_1}(S_1)*\ldots T_{b_{-3e}}(S_{-3e}))\neq 0.\end{equation}  Therefore the set $\mathcal{C}$ forms a local basis of $K_N(F)$ over $\chi(F)$ by Theorem \ref{exhaustbasis}. \qed

There is an alternative formulation of this theorem, that has a different hypotheses but the same proof.

\begin{scholium}[Exhaustion]\label{exhaustion} If $\mathcal{B}$ is any collection of skeins such that the residues of the lead terms of the elements of $\mathcal{B}$ are in one to one correspondence with the elements of $\mathcal{Z}_N^E$, then $\mathcal{B}$ is a local basis for $K_N(F)$. \end{scholium}

\proof The set exhausts $T$ and is linearly independent. \qed

Let $S=\chi(F)-\{0\}$. Since $\chi(F)$ is an integral domain, it is multiplicatively closed and we can form $S^{-1}\chi(F)$, the {\bf function field} of the character variety, and $S^{-1}K_N(F)$ which is a vector space over the function field of the character variety.

\begin{cor}\label{dimKoverchi} The dimension of $S^{-1}K_N(F)$ as a vector space over $S^{-1}\chi(F)$ is $N^{-3e(F)}$, where $e(F)$ is the Euler characteristic of surface $F$.\end{cor}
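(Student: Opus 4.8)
The plan is to deduce this directly from the Local Basis Theorem \ref{localbasis} (or equivalently from the Exhaustion Scholium \ref{exhaustion}) together with the transitivity of localization, so that the remaining work is only bookkeeping. First I would fix an ideal triangulation of $F$ with edge set $E$; this exists because $F$ has finite type and negative Euler characteristic, and then $|E|=-3e(F)$, so $\mathbb{Z}_N^E$ has $N^{-3e(F)}$ elements. Using the surjectivity of $res:\mathcal{A}\to\mathbb{Z}_N^E$ proved above, for each element of $\mathbb{Z}_N^E$ I would pick one simple diagram whose associated admissible coloring has that residue, obtaining a collection $\mathcal{B}$ of $N^{-3e(F)}$ skeins whose lead-term residues are in bijection with $\mathbb{Z}_N^E$. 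By the Exhaustion Scholium, $\mathcal{B}$ is a local basis: there is an element $c\in\chi(F)$, not nilpotent, such that $K_N(F)_c$ is free over $\chi(F)_c$ with basis the image of $\mathcal{B}$. (Equivalently one may take the collection $\mathcal{C}$ of Theorem \ref{localbasis}, whose cardinality is also $N^{-3e(F)}$.)

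Next I would pass from the localization at the single element $c$ to the localization at all of $S=\chi(F)-\{0\}$. Since $\chi(F)$ is an integral domain it has no nilpotents, so $c\neq 0$; hence $c\in S$ and $\{c^k\}\subset S$. Localization is transitive, so, writing $\bar{S}$ for the image of $S$ in $\chi(F)_c$, we have $S^{-1}\chi(F)=\bar{S}^{-1}(\chi(F)_c)$ and $S^{-1}K_N(F)=\bar{S}^{-1}(K_N(F)_c)$. A free module remains free with the same basis under any further localization, since localization commutes with finite direct sums, so the image of $\mathcal{B}$ in $S^{-1}K_N(F)$ is a basis over $S^{-1}\chi(F)$.

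Finally, $S^{-1}\chi(F)$ is the fraction field of the domain $\chi(F)$, hence a field, so $S^{-1}K_N(F)$ is genuinely a vector space over it and the image of $\mathcal{B}$, being linearly independent, consists of $N^{-3e(F)}$ distinct vectors; therefore $\dim_{S^{-1}\chi(F)}S^{-1}K_N(F)=N^{-3e(F)}$. There is no serious obstacle here beyond the Local Basis Theorem itself; the only point requiring a moment's care is the compatibility of the two localizations, which is exactly the observation that $c$ is nonzero because $\chi(F)$ has no nilpotents.
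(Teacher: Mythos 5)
Your proposal is correct and follows the same route as the paper: produce a local basis of cardinality $|\mathbb{Z}_N^E|=N^{-3e(F)}$ via the Exhaustion Scholium (using surjectivity of $res$), then observe that further localizing the free module $K_N(F)_c$ at all of $S=\chi(F)-\{0\}$ preserves the basis. The paper's own proof is just a terser version of this, so no further comparison is needed.
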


\proof A local basis for $K_N(F)$ over $\chi(F)$ always has $N^{-3e(F)}$ elements as this is the order of $\mathbb{Z}_N^E$. A local basis for $K_N(F)$ induces a basis for $S^{-1}K_N(F)$ because localizing a free module yields a free module. \qed

We conclude the section with some  observations about the structure of the localized algebra.

\begin{theorem}\label{simple} Let $S=\chi(F)-\{0\}$ so that $S^{-1}\chi(F)$ is the function field of the character variety. The algebra $S^{-1}K_N(F)$ has no nontrivial left ideals. \end{theorem}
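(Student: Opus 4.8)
The plan is to prove that $S^{-1}K_N(F)$ is a simple ring (no nontrivial two-sided ideals, hence in the presence of the division-algebra structure established in Corollary \ref{division} also no nontrivial left ideals) by exploiting the trace pairing $T$ together with the fact that $S^{-1}\chi(F) = Z(S^{-1}K_N(F))$ is a field. First I would set $R = S^{-1}K_N(F)$ and $k = S^{-1}\chi(F)$, and recall from Corollary \ref{dimKoverchi} that $R$ is a finite-dimensional $k$-algebra, and from Lemma \ref{nondegenerate} (localized) that $T : R\otimes_k R \to k$ is a nondegenerate symmetric $k$-bilinear pairing with $T(\alpha\otimes\beta) = Tr(\alpha*\beta)$, which is cyclic: $Tr(\alpha*\beta) = Tr(\beta*\alpha)$.

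Let $I \subset R$ be a nonzero left ideal. The key point is that $I$ is in fact a two-sided ideal, or at least that its existence forces $R$ to be nontrivial in a way we can contradict. Since $R$ has no zero divisors (the lead-term argument, Theorem \ref{lead}, survives localization) and is a finite-dimensional algebra over the field $k$, it is automatically a division algebra: for nonzero $a \in R$, left multiplication $L_a : R \to R$ is $k$-linear and injective, hence surjective, so $1 \in aR$, and symmetrically $1 \in Ra$, giving a two-sided inverse. Therefore any nonzero left ideal $I$ contains a unit and hence equals $R$. This is really the same content as Corollary \ref{division}, so the cleanest route is simply: take $0 \neq a \in I$; by the division-algebra property there is $b\in R$ with $ba = 1$; then $1 = ba \in I$ since $I$ is a left ideal, so $I = R$.

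The main obstacle — and the reason this deserves a separate statement rather than being folded into Corollary \ref{division} — is making sure the finite-dimensionality over a \emph{field} is in hand before invoking the $L_a$ surjectivity argument, and confirming that "no zero divisors" genuinely passes to the localization $S^{-1}K_N(F)$ (it does, because $S = \chi(F)\setminus\{0\}$ consists of non-zero-divisors in the domain $K_N(F)$, so the localization map is injective and a localization of a domain at a multiplicative set of nonzerodivisors is again a domain). If one instead wants a proof that does not quote Corollary \ref{division}, the alternative is: given $0\neq a\in I$, use nondegeneracy of $T$ to find $\beta$ with $Tr(a*\beta)\neq 0$, i.e.\ $Tr(a*\beta)$ is a unit in $k$; but this alone only shows $I$ is not annihilated by $T$, not that it contains $1$ — so one still needs the injective-implies-surjective step for $L_a$ over the field $k$. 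I would present the short argument via the division-algebra property, remarking that it also recovers simplicity (no nontrivial two-sided ideals) as an immediate consequence.
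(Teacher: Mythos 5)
Your argument is correct, but it takes a genuinely different route from the paper's. The paper proves Theorem \ref{simple} directly with the lead-term machinery: given a nonzero $\alpha$ in a left ideal $L$, it multiplies $\alpha$ on the left by a collection $\mathcal{B}$ of skeins whose residues of lead terms exhaust $\mathbb{Z}_N^E$; since residues add under multiplication, the products $\beta*\alpha$ again have residues in bijection with $\mathbb{Z}_N^E$, so by Scholium \ref{exhaustion} they form a basis of $S^{-1}K_N(F)$ lying inside $L$, forcing $L$ to be everything. In the paper Corollary \ref{division} is deduced \emph{from} Theorem \ref{simple}, so you cannot literally cite that corollary here without circularity --- but you do not actually need to, because you supply an independent proof of the division property: $S^{-1}K_N(F)$ is a finite-dimensional algebra over the field $S^{-1}\chi(F)$ (Corollary \ref{dimKoverchi}) with no zero divisors (and you correctly note that the absence of zero divisors survives the central localization), so left multiplication by a nonzero element is an injective, hence surjective, linear map over the function field, which yields two-sided inverses; a nonzero left ideal then contains $1$. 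What your approach buys is brevity and the standard ring-theoretic fact that a finite-dimensional domain over a field is a division ring; what the paper's approach buys is that it stays entirely inside the skein-theoretic residue calculus, exhibits an explicit basis of $S^{-1}K_N(F)$ contained in $L$, and preserves the paper's intended logical order, with simplicity proved first and the division-algebra statement obtained as a corollary.
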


\proof We show that the only left ideals of $S^{-1}K_N(F)$ are $\{0\}$ and $S^{-1}K_N(F)$. Let $L$ be a left ideal that is not $\{0\}$. Hence it has a  nonzero element $\alpha\in L$. Let $\mathcal{B}$ be any collection of skeins whose lead terms are in one to one correspondence with the elements of $\mathbb{Z}_N^E$, and is hence a basis for $S^{-1}K_N(F)$ by the Scholium \ref{exhaustion}. The skeins $\beta*\alpha$ where $\beta \in \mathcal{B}$ have the property that their lead terms are in one to one correspondence with the elements of $\mathbb{Z}_N^E$, since residues of lead terms are additive under multiplication. However this means that $L$ contains a basis for $S^{-1}K_N(F)$, which in turn implies that $L=S^{-1}K_N(F)$. \qed

\begin{cor} \label{division}The Kauffman bracket skein algebra $K_N(F)$, localized so that every nonzero character is invertible, is a division algebra. \end{cor}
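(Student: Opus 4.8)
The plan is to obtain this as an essentially immediate consequence of Theorem~\ref{simple}, using only the elementary observation that a unital ring whose sole left ideals are $\{0\}$ and the ring itself is automatically a division ring. Before invoking that, I would first pin down what ``$K_N(F)$ localized so that every nonzero character is invertible'' means precisely: it is $S^{-1}K_N(F)$ with $S=\chi(F)-\{0\}$. This is legitimate because $\chi(F)$ sits inside $Z(K_N(F))$ and, being a subring of the domain $K_N(F)$, has no zero divisors, so $S$ is a multiplicatively closed subset of the center that does not contain $0$; moreover $1\in K_N(F)$ maps to a nonzero element of $S^{-1}K_N(F)$, so the localized ring is unital and nonzero (it is not the zero ring), which is exactly what prevents the conclusion of Theorem~\ref{simple} from being vacuous.

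Next I would run the one-line deduction. Let $\alpha\in S^{-1}K_N(F)$ be nonzero and consider the left ideal $L=S^{-1}K_N(F)*\alpha$. Since $\alpha=1*\alpha\in L$, the ideal $L$ is nonzero, so Theorem~\ref{simple} forces $L=S^{-1}K_N(F)$; in particular $1\in L$, i.e. there is $\beta$ with $\beta*\alpha=1$. By the definition of division algebra used in this paper (a left inverse for every nonzero element), this already finishes the proof. If one wants two-sided inverses as a sanity check, note that $\beta\neq 0$ has, by the same argument, a left inverse $\gamma$, and then $\gamma=\gamma*(\beta*\alpha)=(\gamma*\beta)*\alpha=\alpha$, so $\alpha*\beta=1$ as well.

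The honest assessment is that there is no real obstacle at this point: all of the substance has already been expended in Theorem~\ref{simple}, which itself rests on the exhaustion mechanism of Scholium~\ref{exhaustion} and on the additivity of residues of lead terms under multiplication. The only things requiring any attention are bookkeeping points --- that $S$ is a genuine multiplicative subset of the center, and that the localization is a nonzero unital ring --- and both are immediate from the fact that $K_N(F)$ has no zero divisors.
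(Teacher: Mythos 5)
Your proof is correct and follows exactly the paper's argument: take a nonzero $\alpha$, observe that the left ideal it generates is nonzero, invoke Theorem~\ref{simple} to conclude the ideal is all of $S^{-1}K_N(F)$, and extract $\beta$ with $\beta*\alpha=1$. The extra bookkeeping about $S$ being multiplicative and the ring being nonzero, and the remark on two-sided inverses, are fine additions but not needed beyond what the paper does.
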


\proof  Let $\alpha$ be a nonzero element of  $ S^{-1}K_N(F)$. The left ideal $S^{-1}K_N(F)\alpha$ is all of $S^{-1}K_N(F)$ by the Theorem \ref{simple}. Therefore there is some skein $\beta\in S^{-1}K_N(F)$ such that $\beta*\alpha=1$. \qed

\section{Pants Decompositions}\label{pants}

The goal of this section is to factor  the Kauffman bracket skein algebra of a punctured surface $F$ as a module over its center. When viewed as a module over the center, an appropriate localization of  $K_N(F)$ can be expressed as a tensor product of two commutative subalgebras generated by pants decompositions of the surface $F$.

Recall that a surface $F$ of genus $g$ with $p$ punctures has Euler characteristic $e=2-2g-p$. Any ideal triangulation of $F$ has $-3e=6g-6+3p$ edges.
Recall also that a {\bf pants decomposition} of a surface $F$ is a collection  of disjoint simple closed curves on $F$ such that every component of the complement of their union is a thrice punctured sphere (i.e. a pair of pants). Note that if $F$ has  genus $g$ and $p$ punctures, then any pants decomposition of $F$ consists of  $3g-3+p$ curves.

  For each puncture $p_i$ choose a simple closed curve $\partial_i$ that bounds a disk containing the puncture $p_i$ and use the same notation for  the skein corresponding to  $\partial_i$ with the blackboard framing. It is easy to see that  $\partial_i\in Z(K_N(F))$ for all $i$. A smallest subring of $K_N(F)$ that  contains $\chi(F)$ and any collection of $\partial_i$  (denoted $\chi(F)[\partial_{i_1},\dots\partial_{i_k}]$) is also contained in the center of $K_N(F)$.
 
 The center $Z(K_N(F))$ is a finite extension of $\chi(F)$.  We characterize it in \cite{FKL} proving the following:

\begin{theorem}[\cite{FKL}]
Given an orientable surface $F$ of negative Euler characteristic and $p\geq 1$ punctures,
the center $Z(K_N(F))$ is generated by $\chi(F)$ and the skeins $\{\partial_1,\dots\partial_p\}$ surrounding the punctures,
\begin{equation}
Z(K_N(F))=\chi(F)[\partial_1,\dots\partial_p].
\end{equation}

\end{theorem}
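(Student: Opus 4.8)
\medskip
\noindent\textbf{Proof proposal.}
The inclusion $\chi(F)[\partial_1,\dots,\partial_p]\subseteq Z(K_N(F))$ was already observed above ($\chi(F)$ is central by Theorem~\ref{thread}, each $\partial_i$ is central, and the center is a subring), so the work is entirely in the reverse inclusion. I would fix an ideal triangulation with edge set $E$, set $G=\langle res(\partial_1),\dots,res(\partial_p)\rangle\le\mathbb{Z}_N^E$, and argue by descending induction on the lead term, using the well-ordering of simple diagrams (Definition~\ref{wellorder}). First observe that, since $T_N$ is monic and $\tau(\partial_i)=T_N(\partial_i)\in\chi(F)$, every power $\partial_i^{k}$ lies in $\chi(F)+\chi(F)\partial_i+\dots+\chi(F)\partial_i^{N-1}$; hence $\chi(F)[\partial_1,\dots,\partial_p]$ is the $\chi(F)$-span of the monomials $\partial_1^{a_1}\cdots\partial_p^{a_p}$ with $0\le a_i\le N-1$, and by Theorem~\ref{lead} such a monomial has lead term the simple diagram $[\sum_i a_i f_{\partial_i}]$, whose residue is $\sum_i a_i\,res(\partial_i)$; as $(a_i)$ varies these residues run over all of $G$.

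The induction then rests on two statements. \emph{(A)} If $\alpha\in Z(K_N(F))$ is nonzero, then $res(\alpha)\in G$. \emph{(B)} For every admissible coloring $f$ with $res(f)\in G$ and every $z\in\mathbb{C}^{\ast}$ there is $w\in\chi(F)[\partial_1,\dots,\partial_p]$ with $ld(w)=z\,[f]$. Granting these, a nonzero $\alpha\in Z(K_N(F))$ with $ld(\alpha)=z[f]$ satisfies $res(f)\in G$ by (A); then (B) gives $w\in\chi(F)[\partial_1,\dots,\partial_p]\subseteq Z(K_N(F))$ with $ld(w)=z[f]$, so $\alpha-w$ is central with strictly smaller lead term. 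Since lead terms lie in a well-ordered set, iterating this is finite: $\alpha-w_1-\dots-w_K=0$ for some $K$, whence $\alpha=w_1+\dots+w_K\in\chi(F)[\partial_1,\dots,\partial_p]$.

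Statement (B) is the more elementary one. Using that a sum of admissible colorings is admissible, together with the characterization from \cite{AF2} of which simple diagrams occur as lead terms of elements of $\chi(F)$ (namely those of residue $\vec{0}$), it is enough to write $f=h+\sum_i a_i f_{\partial_i}$ with $a_i\ge0$, $\sum_i a_i\,res(\partial_i)=res(f)$, and $res(h)=\vec{0}$, and then take $w=\xi*\partial_1^{a_1}\cdots\partial_p^{a_p}$ for $\xi\in\chi(F)$ with $ld(\xi)=z\,[h]$; the lead terms multiply correctly by Theorem~\ref{lead}. Geometrically this says that a multicurve whose residue lies in $G$ becomes a multicurve of residue $\vec{0}$ after stripping off an appropriate number of peripheral components, and its substance is the combinatorial fact that a nonzero class in $G$ cannot be realized by a multicurve with no peripheral components — a statement to be verified against the triangulation.

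Statement (A) is the main obstacle. By Theorem~\ref{lead} the lead terms of $\alpha*\beta$ and $\beta*\alpha$ coincide up to a power of $A$, and that exponent modulo $2N$ is a fixed bilinear function of $res(\alpha)$ and $res(\beta)$ — the leading-order commutation form, i.e. the quantization, in suitable coordinates, of the Goldman--Weil--Petersson symplectic form. Centrality of $\alpha$ forces this exponent to vanish for all $\beta$, so $res(\alpha)$ lies in the radical modulo $N$ of that form, and what must be shown is that this radical is exactly $G$. The natural route is to pass to a localization of $K_N(F)$ that is a quantum torus (via the Bonahon--Wong quantum trace, or by Dehn--Thurston gluing of pairs of pants), where the center has the standard description as the subalgebra generated over $\chi(F)$ by the monomials in the radical directions, and then to invoke Goldman's theorem that the Casimirs of the Weil--Petersson bracket are precisely the peripheral trace functions — which in the skein picture are the $\partial_i$. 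Transporting this conclusion back to $K_N(F)$ is where the lead-term bookkeeping above does real work, excluding central skeins that would appear only after clearing denominators; the hypothesis $p\ge1$ is used throughout, both to have peripheral curves and to have an ideal triangulation.
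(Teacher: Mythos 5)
You should first note that the paper itself gives no proof of this theorem: it is quoted from \cite{FKL}, which is listed as ``in preparation,'' so there is no in-paper argument to compare yours against. Judged on its own merits, your proposal has a sound architecture --- the easy inclusion, then descending induction on lead terms in the well-ordered monoid of admissible colorings, reducing everything to your statements (A) and (B), with the observation that $T_N(\partial_i)\in\chi(F)$ bounding the degree of each $\partial_i$ --- but both (A) and (B) are precisely where the content of the theorem lives, and both are left as assertions.

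For (B), the difficulty is that admissible colorings form a monoid, not a group: to get $ld(w)=z[f]$ with $w=\xi*\partial_1^{a_1}\cdots\partial_p^{a_p}$ you need $h=f-\sum_i a_i f_{\partial_i}$ to be an honest admissible coloring (nonnegative, satisfying the parity and triangle conditions), not merely to have residue $\vec{0}$; knowing $res(f)\in G$ gives you the congruence but not the inequalities, and it is not clear that every admissible $f$ with $res(f)\in G$ decomposes this way. Either you must prove this combinatorial fact for the chosen triangulation, or weaken (B) to cover only those $f$ arising as lead terms of central elements --- in which case you need a different mechanism to produce $w$. For (A), Theorem \ref{lead} as stated in the paper only asserts the existence of the exponent $n$; your argument needs the stronger fact that $n(\alpha,\beta)-n(\beta,\alpha)$ is, modulo $2N$, a nondegenerate-enough skew bilinear form in the residues whose radical mod $N$ is exactly $G$. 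That is essentially the theorem being proved, repackaged; invoking the quantum trace, a quantum-torus localization, and Goldman's description of the Casimirs of the Weil--Petersson bracket imports machinery well beyond what the paper develops, and the final descent from the center of the localization back to $Z(K_N(F))$ (your ``clearing denominators'' step) is itself a nontrivial integrality claim that you acknowledge but do not address. In short: correct skeleton, but the two load-bearing steps are missing.
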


 Given a pants decomposition $P=\{P_1, \dots , P_{3g-3+p}\}$  of $F$,  let ${\mathcal P}$ denote the subalgebra of $K_N(F)$ generated by the diagrams $\{P_1, \dots , P_{3g-3+p}\}$ with coefficients from $Z(K_N(F))$. This subalgebra is commutative, since the skeins generating it have disjoint diagrams.  
  A slight modification of arguments  in  \cite{AF2} yields:

\begin{theorem}[Extension of scalars]\label{extofsc}
Let $F$ be an orientable surface of genus $g$ with $p$ punctures and negative Euler characteristic, and let ${ P}=\{P_1, \dots , P_{3g-3+p}\}$ be a pants decomposition of $F$.
If ${\rm{Ann}}_i$ denotes a small annulus about the curve $P_i$, then the subalgebra ${\mathcal P}$ generated by the diagrams from $P$ is given as a module over $Z(K_N(F))$ by the following tensor product
\begin{equation}
{\mathcal P} = {\bigotimes_i}_{Z(K_N(F))}\left( K_N({\rm{Ann}}_i)\otimes_{\chi( {\rm{Ann}}_i)} Z(K_N(F))
\right). 
\end{equation}
\end{theorem}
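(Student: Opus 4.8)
The plan is to identify $\mathcal{P}$ with an iterated tensor product by exhibiting an explicit basis over $Z(K_N(F))$ and matching it with the basis of the purported tensor product. First I would recall, via the Local Basis Theorem \ref{localbasis} and the Scholium \ref{exhaustion}, that a collection of threaded diagrams is a local basis precisely when the residues of the lead terms are in bijection with $\mathbb{Z}_N^E$; since the curves $P_1,\dots,P_{3g-3+p}$ of a pants decomposition are disjoint, the skeins $T_{k_1}(P_1)\cdots T_{k_{3g-3+p}}(P_{3g-3+p})$ have lead terms equal to the simple diagram with $k_i$ parallel copies of $P_i$, and distinct tuples $(k_i)$ with $0\le k_i\le N-1$ produce distinct residues. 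Hence $\{T_{k_1}(P_1)\cdots T_{k_{3g-3+p}}(P_{3g-3+p})\mid 0\le k_i\le N-1\}$ is a $Z(K_N(F))$-basis for $\mathcal{P}$. This is the ``slight modification of arguments in \cite{AF2}'' alluded to: the key point is that threading $P_i$ by $T_N$ lands in $\chi(F)\subset Z(K_N(F))$, so any higher power of $P_i$ can be reduced modulo $N$ at the cost of central coefficients.

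Next I would analyze the right-hand side. For a single annulus $\mathrm{Ann}_i$, the algebra $K_N(\mathrm{Ann}_i)\cong\mathbb{C}[x_i]$ has $\chi(\mathrm{Ann}_i)$ equal to the subring generated by $T_N(x_i)$, and $K_N(\mathrm{Ann}_i)$ is free of rank $N$ over $\chi(\mathrm{Ann}_i)$ with basis $1,T_1(x_i),\dots,T_{N-1}(x_i)$ by the product-to-sum formula. Therefore $K_N(\mathrm{Ann}_i)\otimes_{\chi(\mathrm{Ann}_i)}Z(K_N(F))$ is free of rank $N$ over $Z(K_N(F))$, where the ring map $\chi(\mathrm{Ann}_i)\to Z(K_N(F))$ sends $T_N(x_i)$ to the central skein $T_N(P_i)\in\chi(F)$ obtained by threading $P_i$. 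The iterated tensor product over $Z(K_N(F))$ of these is then free of rank $N^{3g-3+p}$ over $Z(K_N(F))$, with basis the pure tensors of the $T_{k_i}(x_i)$. Finally I would define the $Z(K_N(F))$-algebra homomorphism from the right-hand side to $\mathcal{P}$ sending $\bigotimes_i T_{k_i}(x_i)$ to $T_{k_1}(P_1)\cdots T_{k_{3g-3+p}}(P_{3g-3+p})$; this is well-defined because the $P_i$ are disjoint (so the target is commutative and the threaded copies commute) and because the relation $T_N(x_i)\mapsto$ (central element) is respected. It sends a basis to a basis, hence is an isomorphism of $Z(K_N(F))$-modules (indeed of algebras).

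The step I expect to be the main obstacle is verifying that the natural map is genuinely well-defined and that no unexpected relations appear in $\mathcal{P}$ — that is, that $\mathcal{P}$ really is free of rank exactly $N^{3g-3+p}$ over $Z(K_N(F))$ and not smaller. The potential subtlety is that the threading identity $T_N(P_i)\in\chi(F)$ must be compatible with the way $\chi(\mathrm{Ann}_i)$ sits inside both $K_N(\mathrm{Ann}_i)$ and $Z(K_N(F))$, and one must check that the induced coefficient extension does not collapse dimensions. This is controlled by the linear independence over $\chi(F)$ of skeins with distinct residues (the second bullet of the Proposition of \cite{AF2} quoted in Section \ref{review}), which guarantees the $N^{3g-3+p}$ products are independent over $\chi(F)$, hence a fortiori the module has the claimed rank. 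Once that independence is in hand, the tensor-product decomposition follows formally by comparing bases, and I would only need to remark that the isomorphism is one of algebras since both sides are commutative and the generators multiply by the product-to-sum rule.
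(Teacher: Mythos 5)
The formal tensor-product bookkeeping in your proposal is fine, but the load-bearing claim --- that distinct tuples $(k_1,\dots,k_{3g-3+p})$ with $0\le k_i\le N-1$ produce distinct residues --- is exactly the nontrivial content of the theorem, and you assert it without justification. The residue of the lead term of $T_{k_1}(P_1)*\cdots *T_{k_n}(P_n)$ is $\sum_i k_i\,res(P_i)\in\mathbb{Z}_N^E$, so injectivity of $(k_i)\mapsto\sum_i k_i\,res(P_i)$ on $\{0,\dots,N-1\}^n$ is equivalent to the vectors $res(P_1),\dots,res(P_n)$ being linearly independent over $\mathbb{Z}_N$. This is not automatic: it depends on the triangulation, and for an arbitrary triangulation a pants curve could even have residue $\vec{0}$ (if it crossed every edge a multiple of $N$ times), in which case the residue criterion tells you nothing about that curve. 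Your later appeal to the proposition from \cite{AF2} that skeins with distinct residues are independent over $\chi(F)$ is circular at this point, since distinctness of the residues is precisely what needs to be established.

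The paper closes this gap by constructing a triangulation adapted to the pants decomposition: by \cite{AF2} one can order the curves and choose arcs $e_1,\dots,e_{3g-3+p}$ through the punctures with $i(e_i,P_j)=0$ for $j>i$ and $i(e_i,P_i)\in\{1,2\}$, and then complete these arcs to an ideal triangulation of $F$. The matrix whose columns are the $res(P_j)$ padded out with standard basis vectors is then lower triangular with determinant a power of $2$, hence invertible over $\mathbb{Z}_N$ for odd $N$; in particular the $res(P_j)$ are linearly independent. That construction is the ``slight modification of arguments in \cite{AF2}'' the statement alludes to, and it is the step your proposal is missing. Once it is supplied, the rest of your argument (the rank-$N$ freeness of $K_N(\mathrm{Ann}_i)$ over $\chi(\mathrm{Ann}_i)$ and the basis-to-basis comparison) goes through as you describe.
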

\begin{proof} 

It was shown in  \cite{AF2} that it is possible to find an ordering of the curves $\{P_1, \dots , P_{3g-3+p}\}$  and a collection of lines $\{e_1,\dots, e_{3g-3+p}\}$ through the punctures in such a way that
\begin{equation}\label{uptr}
i(e_i,P_j)= 0 \ \mbox{when} \ \ j>i
\end{equation}
\[i(e_i,P_i) = 1 \ \ \mbox{or}\ \ 2.\]
This collection of lines $\{e_1,\dots, e_{3g-3+p}\}$ can be completed to a collection of $-3e=6g-6+3p$ lines forming a triangulation of $F$. 

 Let $M$ be the $-3e\times -3e$ matrix with elements from $\mathbb{Z}_N$  that has column vectors $res(P_{3g-3+p}),\dots,res(P_2), res(P_1), v_{3g-3+p+1}, \dots,  v_{6g-6+3p}$, where $v_i\in \mathbb{Z}_N^{-3e}$ has $1$ in the $i$-th place and $0$ elsewhere. Conditions (\ref{uptr}) guarantee that $M$ is lower triangular and has determinant equal to a power of $2$. Since $N$ is odd,  $2$ is a unit in $\mathbb{Z}_N$, and this shows that the column vectors of $M$ form a basis of $\mathbb{Z}_N^{-3e}$. This guarantees that the skeins $P_i$ are linearly independent in $K_N(F)$ and proves the theorem.
\end{proof}

\begin{cor}
The subalgebra 
${\mathcal P}$ of $K_N(F)$ generated by a pants decomposition of the surface $F$ is a free module of rank  $N^{3g-3+p}$ over $Z(K_N(F))$. \qed
\end{cor}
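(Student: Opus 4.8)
The plan is to read off the rank directly from the Extension of Scalars theorem (Theorem~\ref{extofsc}), which already identifies $\mathcal{P}$ as a module over $Z = Z(K_N(F))$ with the tensor product
\[
\mathcal{P} \;=\; \bigotimes_{i,\,Z}\Bigl( K_N(\mathrm{Ann}_i)\otimes_{\chi(\mathrm{Ann}_i)} Z \Bigr),
\]
with $3g-3+p$ tensor factors, one for each curve $P_i$ of the pants decomposition. So the whole task is to compute the rank of a single factor $K_N(\mathrm{Ann}_i)\otimes_{\chi(\mathrm{Ann}_i)} Z$ over $Z$, and then multiply.

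First I would pin down the single-factor rank. We know $K_N(\mathrm{Ann}_i)\cong\mathbb{C}[x]$, with basis $\{T_k(x)\mid k\ge 0\}$ over $\mathbb{C}$, and the threading map sends $\chi(\mathrm{Ann}_i)$ onto the subring generated by $T_N(x)=T_N$; by the multiplicative property $T_{Nq+r}(x)$ can be rewritten, via the product-to-sum formula, so that $\{T_0,T_1,\dots,T_{N-1}\}$ spans $K_N(\mathrm{Ann}_i)$ as a module over $\chi(\mathrm{Ann}_i)=\mathbb{C}[T_N]$, and these are independent there because their lead terms $x^0,\dots,x^{N-1}$ have distinct residues mod $N$. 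Hence $K_N(\mathrm{Ann}_i)$ is free of rank $N$ over $\chi(\mathrm{Ann}_i)$. Extending scalars along $\chi(\mathrm{Ann}_i)\to Z$ (which is how the factor is written), $K_N(\mathrm{Ann}_i)\otimes_{\chi(\mathrm{Ann}_i)}Z$ is free of rank $N$ over $Z$: tensoring a free module of rank $N$ up along a ring map preserves freeness and rank.

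Then I would assemble the answer: a tensor product over $Z$ of $3g-3+p$ free $Z$-modules each of rank $N$ is free over $Z$ of rank $N^{\,3g-3+p}$, with basis the products $T_{k_1}(P_1)*\cdots*T_{k_{3g-3+p}}(P_{3g-3+p})$ for $0\le k_j\le N-1$. The only thing one has to double-check is that this "obvious" basis really is $Z$-linearly independent in $K_N(F)$ itself, not merely in the abstract tensor product; but this is exactly what the determinant/lower-triangularity computation at the end of the proof of Theorem~\ref{extofsc} gives — the residues $\mathrm{res}(P_i)$ are part of a $\mathbb{Z}_N$-basis of $\mathbb{Z}_N^E$, so by the second bullet of the last Proposition of Section~\ref{review} (distinct residues $\Rightarrow$ independence over $\chi(F)$, hence over the larger ring $Z$) the monomials $\prod_j T_{k_j}(P_j)$ have distinct residues and are independent. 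I do not expect a genuine obstacle here; the mild subtlety is just making sure the rank of a multi-fold tensor product over a (commutative) ring multiplies as stated and that freeness is inherited, which is standard since $Z$ is commutative and each factor is finite free.
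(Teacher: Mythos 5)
Your proposal is correct and matches the paper's intent exactly: the corollary carries no separate proof there because it is read off directly from Theorem \ref{extofsc}, each factor $K_N(\mathrm{Ann}_i)\otimes_{\chi(\mathrm{Ann}_i)}Z(K_N(F))$ being free of rank $N$ over $Z(K_N(F))$ (since $\mathbb{C}[x]$ is free of rank $N$ over $\mathbb{C}[T_N(x)]$), so the $(3g-3+p)$-fold tensor product is free of rank $N^{3g-3+p}$. One small caution: your parenthetical ``independence over $\chi(F)$, hence over the larger ring $Z$'' is backwards as a general module-theoretic implication (independence over a subring need not pass to an overring); the independence over $Z(K_N(F))$ is precisely what the tensor-product decomposition of Theorem \ref{extofsc} asserts, and both you and the paper ultimately lean on that theorem for it.
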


Note that since   ${\mathcal P}$  is commutative, the same is true if we localize it over some element of the center of $K_N(F)$.  Freeness is preserved by extending coefficients and by localizing, see \cite{AM}. 
 
In order to check whether a collection of skeins forms a local basis  in constructing a splitting of the Kauffman bracket skein algebra we need an enhancement of the Local Basis Theorem  \ref{localbasis}.

The skeins $\partial_i$ are central, and have degree $N$ over $\chi(F)$. In fact they are independent as a consequence of the computation in \cite{AF2} so that the degree of $\chi(F)[\partial_{i_1},\ldots,\partial_{i_k}]$ over $\chi(F)$ is $N^k$. Hence if the result of adjoining the $\partial_i$ to a set $\mathcal{B}$ is a local basis for $K_N(F)$, then $\mathcal{B}$ is a local basis for $K_N(F)$ over the central subring  $\chi(F)[\partial_{i_1},\ldots,\partial_{i_k}]$, whose rank is $N^{-3e(F)-k}$. 

\begin{theorem}\label{basecase}
 Let $\mathcal{B}$ be any collection of diagrams so that their residues form a basis for $\mathbb{Z}_N^E$  Suppose that some subset of $B$ consists of some of the curves 
 $\partial_i$ bounding the punctures. If  $\mathcal{B}=\{\partial_{i_1},\dots,\partial_{i_k}\}\cup\{S_1,\dots S_{-3e-k}\}$ then  the set of skeins  
 \begin{equation} 
\{
T_{j_1}(S_1)*T_{j_2}(S_2)*\ldots *T_{j_{-3e-k}}(S_{-3e-k})\ | \ 0\leq j_i \leq N-1
\}
\end{equation}
 forms a local basis for $K_N(F)$ over $\chi(F)[\partial_{i_1},\dots \partial_{i_k}]$.
\end{theorem}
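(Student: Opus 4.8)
The plan is to mimic the proof of the Local Basis Theorem \ref{localbasis}, but working over the larger central subring $R'=\chi(F)[\partial_{i_1},\dots,\partial_{i_k}]$ instead of $\chi(F)$, and using a version of the trace pairing landing in $R'$. First I would record the necessary modification of the exhaustion machinery: the pairing $T(\alpha\otimes\beta)=Tr(\alpha*\beta)$ already takes values in $\chi(F)\subseteq R'$, so it is automatically an $R'$-bilinear symmetric pairing $K_N(F)\otimes_{R'}K_N(F)\to R'$, and it remains nondegenerate. The key new input is that, because the $\partial_i$ are central and have residue equal to the appropriate standard basis vector $v_{i}\in\mathbb{Z}_N^E$, multiplying a skein by a power of $\partial_{i_j}$ shifts the residue of its lead term by that basis vector without disturbing anything else (by Theorem \ref{lead} and the fact that $\partial_i$ has residue $v_i$); moreover the $\partial_i$ are independent over $\chi(F)$ so that $R'$ is free of rank $N^k$ over $\chi(F)$, as noted just before the statement.

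Next I would verify that the proposed set $\mathcal{C}=\{T_{j_1}(S_1)*\cdots*T_{j_{-3e-k}}(S_{-3e-k})\mid 0\le j_i\le N-1\}$ exhausts $T$ as a pairing over $R'$. Given an arbitrary nonzero skein $\alpha$, let $(m_i)\in\{0,\dots,N-1\}^E$ be the residue of its lead term and let $(l_i)$ be the complementary tuple. Because $\mathcal{B}=\{\partial_{i_1},\dots,\partial_{i_k}\}\cup\{S_1,\dots,S_{-3e-k}\}$ has residues forming a basis of $\mathbb{Z}_N^E$, I can write $(l_i)=\sum_{j} c_j\,\mathrm{res}(\partial_{i_j})+\sum_t b_t\,\mathrm{res}(S_t)$ with all coefficients in $\{0,\dots,N-1\}$. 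Then the lead term of $\partial_{i_1}^{c_1}\cdots\partial_{i_k}^{c_k}*\alpha*\bigl(T_{b_1}(S_1)*\cdots*T_{b_{-3e-k}}(S_{-3e-k})\bigr)$ has residue $(m_i)+(l_i)=\vec 0$, so by Corollary \ref{chi} that lead term lies in $\chi(F)$, hence $Tr$ does not kill it and $T\bigl(\alpha\otimes(\text{that element of }\mathcal C\cdot\text{a central monomial})\bigr)\neq 0$. Since the $\partial_{i_j}$-powers are central units after localization (or can be absorbed on the $\alpha$ side using $R'$-linearity of $T$), this shows $\mathcal C$ exhausts $T$ over $R'$. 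The set $\mathcal C$ is also $R'$-linearly independent because its elements have lead terms with distinct residues modulo the span of the $\mathrm{res}(\partial_{i_j})$ — this is exactly where independence of the $\partial_i$ over $\chi(F)$ enters, guaranteeing no nontrivial $R'$-combination can cancel.

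With exhaustion and independence in hand, Theorem \ref{exhaustbasis} applied with $C=R'$, $R=K_N(F)$, and the pairing $T$ finishes the argument, provided $\det(g_{ij})$ is not nilpotent in $R'$; but $R'\subseteq Z(K_N(F))\subseteq K_N(F)$ has no zero divisors (Proposition on $K_N(F)$), so having full rank $N^{-3e-k}=(\#\mathcal C)$ over the fraction field of $R'$ forces $\det(g_{ij})\neq 0$ and hence non-nilpotent. Localizing at $c=\det(g_{ij})$ gives the claimed local basis over $R'_c$. The main obstacle I anticipate is the bookkeeping in the exhaustion step: one must be careful that the $\partial_{i_j}$-powers inserted to correct the residue genuinely act as invertible central elements in the localization and can be moved across the pairing, and that "residues form a basis of $\mathbb Z_N^E$" is used correctly to solve for both the $c_j$ and the $b_t$ simultaneously — this is the place where the hypothesis that the $\partial_i$ sit inside $\mathcal B$ as part of a residue-basis is doing real work, as opposed to being merely decorative.
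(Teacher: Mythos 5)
Your overall strategy --- rerunning the exhaustion machinery with $C=R'=\chi(F)[\partial_{i_1},\dots,\partial_{i_k}]$ --- is genuinely different from the paper's, which simply applies Theorem \ref{localbasis} to all of $\mathcal{B}$ and then uses multiplicativity of rank in the tower $\chi(F)\subset R'\subset K_N(F)$: the monomials $\prod_m T_{j_m}(\partial_{i_m})$ form a free $\chi(F)$-basis of $R'$ of rank $N^k$, so the remaining factors $T_{n_1}(S_1)*\cdots*T_{n_{-3e-k}}(S_{-3e-k})$ must be a basis of rank $N^{-3e-k}$ over $R'$. Unfortunately your route has a genuine gap at its first step: the pairing $T(\alpha\otimes\beta)=Tr(\alpha*\beta)$ is \emph{not} $R'$-bilinear. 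The trace $Tr$ erases every threaded primitive diagram containing a curve threaded by $T_m$ with $N\nmid m$; in particular $Tr(\partial_i)=0$ while $\partial_i\,Tr(1)=\partial_i\neq 0$, so $Tr$ is $\chi(F)$-linear but not $\chi(F)[\partial_i]$-linear. Taking values inside the subring $\chi(F)\subseteq R'$ does not make a pairing $R'$-bilinear. Theorem \ref{exhaustbasis} with $C=R'$ requires $C$-bilinearity (its spanning step pulls the coefficients $T(v_j\otimes w)$ out of the pairing to conclude that $w-\sum_jT(v_j\otimes w)w_j$ pairs to zero with every $v_i$), so it cannot be invoked with this $T$. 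The same defect undermines the step where you ``absorb the $\partial_{i_j}$-powers on the $\alpha$ side using $R'$-linearity of $T$'': what your residue computation actually establishes is that the \emph{full} family $\{\partial\text{-monomials}\}\cdot\mathcal{C}$ exhausts $T$ over $\chi(F)$, which is exactly Theorem \ref{localbasis} applied to $\mathcal{B}$, not the descent to a basis over $R'$.

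To repair your approach you would need to construct a genuinely $R'$-linear relative trace $Tr_{R'}:K_N(F)\to R'$ (for instance, keep exactly the threaded terms in which every curve other than the $\partial_{i_j}$ is threaded by a multiple of $N$) and verify its symmetry and nondegeneracy; alternatively, follow the paper: once Theorem \ref{localbasis} exhibits the products $\bigl(\prod_m T_{j_m}(\partial_{i_m})\bigr)*T_{n_1}(S_1)*\cdots*T_{n_{-3e-k}}(S_{-3e-k})$ as a local basis over $\chi(F)$, freeness of $R'$ over $\chi(F)$ on the $\partial$-monomials regroups this at once into the claimed local basis over $R'$. A minor further inaccuracy: the residue of $\partial_i$ need not be a standard basis vector of $\mathbb{Z}_N^E$ (in the paper's triangulation it is a vector of $1$'s and $2$'s); nothing in the hypothesis forces this, and the argument should use only that the residues of the elements of $\mathcal{B}$ form a basis.
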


\proof  Suppose that $\mathcal{B}=\{\partial_{1_1},\ldots,\partial_{i_k},S_1,\ldots S_{-3e-k}\}$.  By Theorem \ref{localbasis} the skeins 
\begin{equation}\left(\prod_{m=1}^kT_{j_m}(\partial_{i_m})\right)T_{n_1}(S_1)*\dots*T_{n_{-3e-k}}(S_{-3e-k})
\end{equation}  
where the $j_m$ and $n_i$ range from $0$ to $N-1$,  is a basis for the appropriate localization of $K_N(F)$ over the appropriate localization of $\chi(F)$.
This means that the subspace spanned by the $\prod_{m=1}^kT_{j_m}(\partial_{i_m})$ is free of rank $N^{k}$ in the appropriate localization. However this subspace  is just $\chi(F)[\partial_{1_1},\ldots,\partial_{i_k}]$. By the multiplicative property of degree this means that the dimension of the appropriate localization of $K_N(F)$ over the appropriate localization of  $\chi(F)[\partial_{1_1},\ldots,\partial_{i_k}]$ is $N^{-3e(F)-k}$. 
\qed

 \begin{theorem}\label{pantssplit}
 Let $F$ be an orientable surface of genus $g$ with $p$ punctures and negative Euler characteristic, where $p\geq 1$.  There exist two pants decompositions $P$ and $Q$ of $F$ and a skein $c\in Z(K_N(F))$ such that the associated skein modules 
 ${\mathcal P}$ and ${\mathcal Q}$ localized over $S=\{c^k\}$ give a splitting of the localized skein algebra of $F$ over its center.
 \begin{equation}
K_N(F)_c  = \mathcal{P}_c\otimes_{Z(K_N(F))_c}\mathcal{Q}_c.
 \end{equation}
 The intertwiner is given by $p\otimes q \rightarrow p*q$.
\end{theorem}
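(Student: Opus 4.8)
The plan is to produce two pants decompositions $P$ and $Q$ whose curves interact nicely with an ideal triangulation, and then to use the Exhaustion Scholium \ref{exhaustion} (in the enhanced form of Theorem \ref{basecase}) to show that the products $p*q$, with $p$ ranging over the canonical generators of $\mathcal{P}$ and $q$ over those of $\mathcal{Q}$, form a local basis for $K_N(F)$ over the center. First I would fix a pants decomposition $P=\{P_1,\dots,P_{3g-3+p}\}$ of $F$. Since $p\geq 1$, at least one pair of pants in the complement of $P$ contains a puncture, and in fact one can choose $P$ together with a dual collection of lines through the punctures so that, as in the proof of Theorem \ref{extofsc}, the residues $res(P_i)$ together with the residues of the $\partial_j$ bounding the punctures span a rank-$(3g-3+p)+p$ summand of $\mathbb{Z}_N^E$; the remaining $-3e-(3g-3+p)-p = 3g-3$ coordinates will have to be filled in by the curves of $Q$. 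So I would then choose a second pants decomposition $Q=\{Q_1,\dots,Q_{3g-3+p}\}$, obtained from $P$ by the standard ``genus-reducing'' moves (each handle of $F$ carries a pair of dual curves meeting once, à la the once-punctured torus case of \cite{AF1}), chosen so that $res(Q_i)$ fill in exactly the missing $3g-3$ coordinates of $\mathbb{Z}_N^E$ and leave the rest untouched, while the intersection pattern $i(P_i,Q_j)$ is as simple as possible (one intersection point per handle). The key combinatorial claim is that $\{\partial_1,\dots,\partial_p\}\cup\{P_1,\dots,P_{3g-3+p}\}\cup\{\text{the genuinely new curves among the }Q_j\}$ has residues forming a basis for $\mathbb{Z}_N^E$, which follows by the same lower-triangularity-and-power-of-$2$-determinant argument used for $M$ in Theorem \ref{extofsc}.

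Granting that combinatorial input, here is how the pieces fit. By Theorem \ref{basecase} applied to this collection (with $\{\partial_1,\dots,\partial_p\}$ playing the role of the $\partial_{i_j}$'s), the threaded products
\begin{equation}
T_{a_1}(P_1)*\cdots*T_{a_{3g-3+p}}(P_{3g-3+p})*T_{b_1}(Q'_1)*\cdots*T_{b_{3g-3}}(Q'_{3g-3})
\end{equation}
over the new $Q$-curves $Q'_\ell$, with exponents in $\{0,\dots,N-1\}$, form a local basis for $K_N(F)$ over $Z(K_N(F))=\chi(F)[\partial_1,\dots,\partial_p]$; call the localizing element $c\in Z(K_N(F))$. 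Now by the Corollary after Theorem \ref{extofsc}, $\mathcal{P}$ is free of rank $N^{3g-3+p}$ over $Z(K_N(F))$ with basis the $T_{a_i}(P_i)$ products, and likewise $\mathcal{Q}$ is free of rank $N^{3g-3+p}$ with basis the $T_{b_j}(Q_j)$ products. Since $Q_j$ that are already curves of $P$ (or are isotopic into $\chi(F)$-data, i.e. have residue already accounted for) contribute central factors after localization — this is where one uses that the $Q$-curves split as ``old'' curves, whose threadings lie in $\mathcal{P}$ up to central units, and ``new'' curves $Q'_\ell$ — the set of products $p*q$ with $p$ a $\mathcal{P}$-basis element and $q$ a $\mathcal{Q}$-basis element spans the same localized module as the $T_a(P)*T_b(Q')$ above, and there are exactly $N^{3g-3+p}\cdot N^{3g-3+p}/N^{3g-3+p}\cdot(\text{central overlap})$... more cleanly: after localizing at $c$, one checks that $\mathcal{P}_c$ and $\mathcal{Q}_c$ together generate $K_N(F)_c$ and that the natural map $\mathcal{P}_c\otimes_{Z(K_N(F))_c}\mathcal{Q}_c\to K_N(F)_c$, $p\otimes q\mapsto p*q$, is surjective by the spanning statement and injective by a rank count: both sides are free over $Z(K_N(F))_c$, the left of rank $N^{3g-3+p}\cdot N^{3g-3+p}=N^{6g-6+2p}$... but $K_N(F)_c$ has rank $N^{-3e-p}=N^{6g-6+2p}$ over $Z(K_N(F))_c$ by Theorem \ref{basecase}, so the ranks agree and a surjection of free modules of equal finite rank over a commutative ring (here $Z(K_N(F))_c$, an integral domain) is an isomorphism.

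The step I expect to be the main obstacle is the combinatorial/geometric one: exhibiting the two pants decompositions $P$ and $Q$ explicitly enough that (i) their residues together with the $\partial_i$ span $\mathbb{Z}_N^E$, and (ii) the threadings of the ``old'' $Q$-curves are visibly central (or lie in $\mathcal{P}$) after inverting $c$, so that the tensor-product map has the right source rank $N^{6g-6+2p}$ rather than something larger. Concretely one wants $P$ and $Q$ to share the $2g-2+p$ ``structural'' curves and differ only in the $g$ pairs of curves cutting the handles, with each such pair meeting in a single point; then $\mathcal{P}$ and $\mathcal{Q}$ overlap exactly in the subalgebra generated by the shared curves together with $Z(K_N(F))$, which has rank $N^{2g-2+p}$, giving $N^{3g-3+p}\cdot N^{3g-3+p}/N^{2g-2+p}\cdot$ — no: the cleanest route is to avoid analyzing the overlap and instead define $Q$ so that only $3g-3$ of its curves are ``new'' and note that the remaining $2g-2+p$ shared curves already appear among the $\mathcal{P}$-generators, so $\mathcal{P}_c\otimes_{Z_c}\mathcal{Q}_c$ should really be presented via the $3g-3$ new $Q$-threadings, matching the $N^{6g-6+2p}$ count after accounting for the $N^{2g-2+p}$-fold redundancy; I would set this up carefully so the rank bookkeeping is transparent. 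Once the two decompositions and $c$ are pinned down, the surjectivity is immediate from the Local Basis / Exhaustion machinery and injectivity is the equal-rank argument, so the theorem follows.
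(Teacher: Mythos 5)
Your overall strategy --- choose $P$ and $Q$ so that the residues of $P\cup Q\cup\{\partial_1,\dots,\partial_p\}$ form a basis of $\mathbb{Z}_N^E$, invoke Theorem \ref{basecase} to get a local basis of threaded products, and then match ranks over $Z(K_N(F))_c$ --- is the paper's strategy. But there is a genuine gap, and it starts with an arithmetic slip that then propagates into a design for $Q$ that cannot work. You compute the number of ``missing'' residue directions as $-3e-(3g-3+p)-p=3g-3$; in fact $-3e=6g-6+3p$, so this difference is $3g-3+p$, which is exactly the size of a full pants decomposition. Consequently \emph{every} curve of $Q$ must contribute a new residue direction, and $Q$ cannot share any curves with $P$. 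Your proposed construction, in which $P$ and $Q$ have $2g-2+p$ ``structural'' curves in common and only $3g-3$ of the $Q_j$ are new, fails on both ends: the residues of $P\cup Q\cup\{\partial_i\}$ would then span a proper subgroup of $\mathbb{Z}_N^E$ (so Theorem \ref{basecase} does not apply), and the multiplication map $\mathcal{P}_c\otimes_{Z_c}\mathcal{Q}_c\to K_N(F)_c$ would have a nontrivial kernel coming from the shared factors, so the equal-rank surjection argument never gets off the ground. Your own writeup registers this tension (the $N^{2g-2+p}$-fold ``redundancy'') but does not resolve it. In the paper the two decompositions are disjoint as sets of curves --- they intersect geometrically but no curve appears in both --- and the count $(3g-3+p)+(3g-3+p)+p=6g-6+3p=|E|$ is exactly what makes the ranks balance with no overlap to explain away.

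The second, larger issue is that the step you defer as ``the main obstacle'' is the entire content of the proof. The paper explicitly constructs both pants decompositions handle by handle (curves $a_i,c_i,h_i$ for $P$ and $b_i,e_i,s_i$ for $Q$, plus nested curves around the punctures when $p>1$), builds a compatible ideal triangulation, writes down the full $(6g-6+3p)\times(6g-6+3p)$ matrix of geometric intersection numbers, and reduces it by an iterative block argument to show its determinant is $\pm 2^K$, hence a unit in $\mathbb{Z}_N$. There is no soft argument in the paper that substitutes for this computation; the analogue of your appeal to Theorem \ref{extofsc} only handles one pants decomposition at a time (via the triangular condition $i(e_i,P_j)=0$ for $j>i$), and extending it to two interleaved decompositions simultaneously is precisely what requires the explicit matrix work. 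Until you exhibit the curves and verify the determinant, the ``combinatorial input'' you are granting yourself is unearned, and with the overlapping design you describe it is actually false.
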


\proof

We will show how to  choose  two pants decompositions of $F$,  
\begin{equation}P=\{P_1,\dots,P_{3g-3+p}\},  Q =\{Q_1\dots,Q_{3g-3+p}\}
\end{equation} 
so that if $\mathcal{P}$ and $\mathcal{Q}$ are the subalgebras of $K_N(F)$ generated by the curves of $P$ and $Q$ then
\begin{equation} {\mathcal{P}}_c\otimes_{Z({{K}}_N(F))_c}{\mathcal{Q}}_c\cong {K}_N(F)_c.\end{equation}
The skein $c\in Z(K_N(F))$ is equal to the determinant of the matrix $(T(v_i,v_j))$, 
where $v_i$, $v_j$ are from the family of skeins
\begin{equation}\left(\prod_{m=1}^kT_{j_m}(\partial_{i_m})\right)T_{n_1}(P_1)*\dots*T_{n_{3g-3+p}}(P_{3g-3+p})*
T_{n_1}(Q_1)*\dots*T_{n_{3g-3+p}}(Q_{3g-3+p})
\end{equation}  
and $T$ is the trace pairing defined by (\ref{tracepairing}).

The isomorphism is given by sending a tensor product of skeins into their product in $K_N(F)$,
\begin{equation}\label{pantsiso}
p\otimes q \mapsto p *q.
\end{equation}
By theorem \ref{extofsc} the modules $\mathcal{P}$ and $\mathcal{Q}$ are free over 
$Z(K_N(F))$
and 
\begin{equation}
\dim{\mathcal{P}}_{Z({K}_N(F))}= \dim{\mathcal{Q}}_{Z({K}_N(F))}= 3g-3+p.
\end{equation}
The basis of the tensor product  $\mathcal{P}\otimes_{Z({K}_N(F))}\mathcal{Q}$ consists of skeins 
\begin{equation}
\{\prod_iT_{k_i}(P_i)\otimes \prod_jT_{l_j}(Q_j)\  | \  k_i,l_j \in \{0,1,\dots N-1\} \}.
\end{equation}
This tensor product has dimension $N^{6g-6+2p}$ as a module over over  $Z(K_N(F))$. Note that $K_N(F)$ also has dimension $N^{-3e-p}=N^{6g-6+2p}$ over  $Z(K_N(F))$. 

In order to prove that the map given by the equation (\ref{pantsiso}) is an isomorphism we construct a triangulation of the surface $F$ and show that the vectors of the residue classes of  the curves $P\cup Q\cup\{\partial_1,\dots \partial_p\}$ form a basis for ${\mathbb{Z}}_N^E$.  This is achieved by proving that the change of basis matrix from the standard basis of ${\mathbb{Z}}_N^E$ to this basis has determinant which is a unit in ${\mathbb{Z}}_N$. 
 By Theorem \ref{basecase} this implies that the skeins  $\prod_iT_{k_i}(P_i)* \prod_jT_{l_j}(Q_j)$ form a basis for $K_N(F)_c$ over 
$Z({K}_N(F))_c$.

We present an explicit computation of the determinant of the transpose of the change of basis matrix for a genus four surface with one puncture. We  describe the matrix for  arbitrary genus and one puncture and show how to extend the computation to the general case. Finally, we consider what happens for any number of punctures.
 
The first step is to describe the two specific pants decompositions $P$ and $Q$.
In all the descriptions  we order the $g$ handles from top to bottom, where the Morse function is given by the height on the page. 
Diagram $P$ for a surface of genus $3$ with one puncture is shown in Figure \ref{P3}. We depict the puncture by a vertical red segment. There are three curves surrounding the three holes, called $a_1$, $a_2$ and $a_3$, two more nested curves in the front (one surrounding the top two holes, and one surrounding all three holes) denoted $c_1$ and $c_2$, and one curve in the back, drawn with dotted lines, which surrounds the top two holes, denoted $h_1$. Those $6$ curves yield a pants decomposition of a closed surface of genus $3$ (note that $6=3\cdot 3 -3$). Since $F$ has one puncture we need an additional curve which is parallel to one of the other curves with the puncture contained in an annulus co-bounded by these two curves. In  Figure \ref {P3} this is the outermost curve drawn on the back, denoted $h_2$, which co-bounds a punctured  annulus with $c_2$.

\begin{figure}[H]\begin{center}\begin{picture}(143,182)\includegraphics{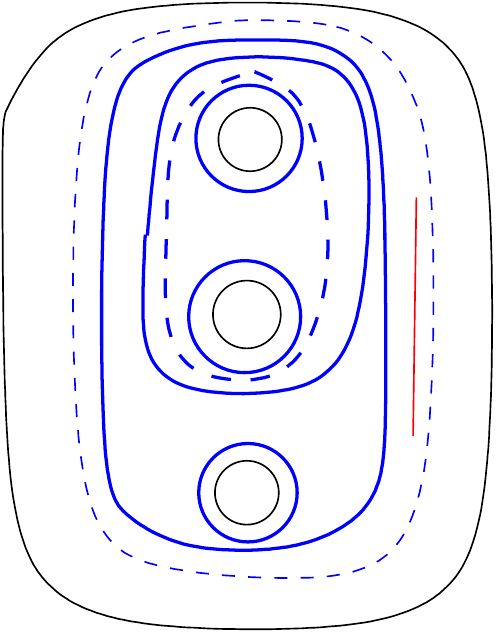}\end{picture}\put(-75,121){$a_1$}
\put(-75,110){$a_2$}
\put(-75,57){$a_3$}
\put(-100,63){$c_1$}
\put(-105,35){$c_2$}
\put(-93,113){$h_1$}
\put(-135,113){$h_2$}
\end{center}
\caption{$P$ for a surface of genus $3$ and $1$ puncture}\label{P3}\end{figure}

When we increase the genus by one we add one more curve surrounding the additional hole, one additional nested curve in the front surrounding all $g$ holes, and one additional  nested curve in the back surrounding $g-1$ holes. Similarly, the outermost curve is doubled and slid across the puncture.

Consequently, for genus g, our preferred pants decomposition $P$ consists of $g$ curves surrounding the $g$ single holes (denoted by $a_1$ through $a_g$), $g-1$ nested curves on the front of the surface surrounding an increasing number of holes from top down, starting with the top $2$ holes and all the way to all $g$ holes, (denoted by $c_1$ through $c_{g-1}$), and the analogous $g-2$ nested curves on the back surrounding from top down $2$, $3$, up to $g-1$ holes (denoted by $h_1$, up to $h_{g-2}$), and finally a curve that is a copy of the curve $c_{g-1}$ surrounding all $g$ holes, slid across the puncture, denoted $h_{g-1}$.

Diagram $Q$ for genus $3$ surface and one puncture, pictured in Figure \ref {Q3}, consists of three curves, each one cutting across each of the three handles (to the left in the figure) which we denote $b_1$, $b_2$ and $b_3$, one additional curve cutting across the middle handle on the other side, denoted $s_1$ (slanted to the right in the figure), two vertical curves cutting through the handles between consecituve holes,  denoted $e_1$ and $e_2$, and a curve that is a copy of the curve $b_3$, slid across the puncture, which we denote by $s_2$.

\begin{figure}[H]\begin{center}\begin{picture}(142,183)\includegraphics{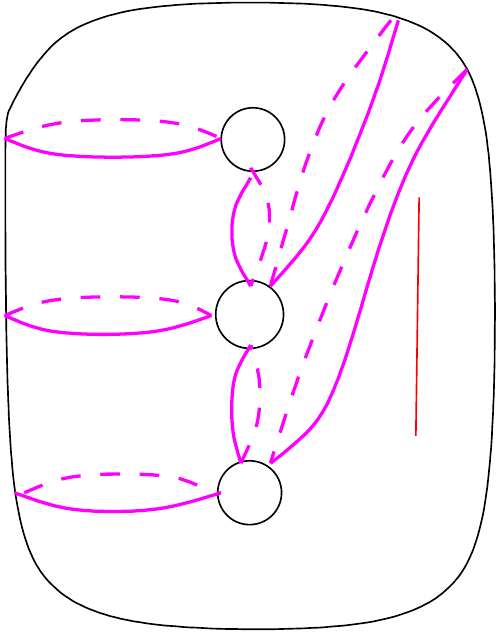}
\put(-120,125){$b_1$}
\put(-120,75){$b_2$}
\put(-120,20){$b_3$}
\put(-95,65){$e_2$}
\put(-95,110){$e_1$}
\put(-60,155){$s_1$}
\put(-30,155){$s_2$}
\end{picture}

\end{center}
\caption{$Q$ for a surface of genus $3$ and $1$ puncture}\label{Q3}\end{figure}

Increasing the genus means adding the two curves cutting through the handles from the additional middle hole to the outside and one cutting the handle joining it to the other hole.

In summary, for genus $g$  our preferred pants decomposition $Q$ consists of $g$ horizontal curves cutting across each handle to the left (denoted $b_1$ through $b_g$),  $g-2$ slanted curves  cutting across the $g-2$ middle handles to their right above the puncture (denoted $s_1$ through $s_{g-2}$), $g-1$ vertical curves cutting  the handles joining adjacent holes (denoted $e_1$ through $e_{g-1}$), and one curve parallel to  $b_g$ slid across the puncture (denoted $s_{g-1}$).

Note that $P$ and $Q$ are chosen in such a way that they intersect minimally.

We will describe additional curves that need to be added to each family when the surface has more than one puncture after we finish the discussion of the once punctured case.

Next step is to describe a preferred triangulation. We do this by slicing the surface horizontally by $g-1$ edges into $g$ handles.  Assume first that the surface has one puncture. All the edges of the triangulation end at the puncture. If we drew the puncture as a single point, the picture would be hard to render. Instead, we draw the puncture as a segment starting at the bottom part of the top handle, running across all the middle handles and ending at the top of the bottom handle. Nevertheless, for each handle we draw the edges meeting at a single point.  The triangulation of the top handle has $5$ edges, four of which triangulate the handle and one is the boundary component denoted $b_1$, see Figure \ref{top}. The middle pieces have $6$ edges each: four of them are translations of the four edges triangulating the top handle, one corresponds to the bottom boundary component, denoted $b_i$, and one additional edge denoted $f_i$ is coming from gluing the handle to the ones above and below it along a triangle, see Figure \ref{middle}. In Figure \ref{middle} the bold vertical arc represents the puncture. The bottom piece has just $4$ edges which are a rotation of the four edges triangulating the top handle, since we are always including the boundary edge in the slice  above.

\begin{figure}[H]\begin{center}\begin{picture}(110,120)\scalebox{1}{\includegraphics{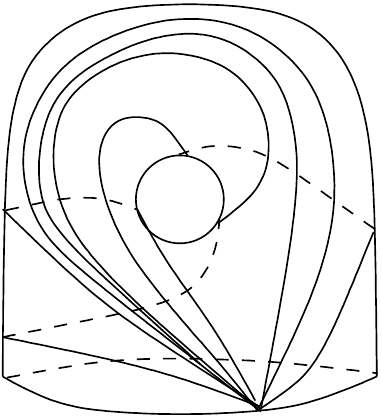}}\put(-120,20){$1$}\put(-120,55){$2$}\put(-75,75){$3$}\put(0,50){$4$}\put(0,5){$b_1$}\end{picture}\end{center}
\caption{Triangulation of the top handle}\label{top}\end{figure}

\begin{figure}[H]\begin{center}\begin{picture}(116,131)\scalebox{1}{\includegraphics{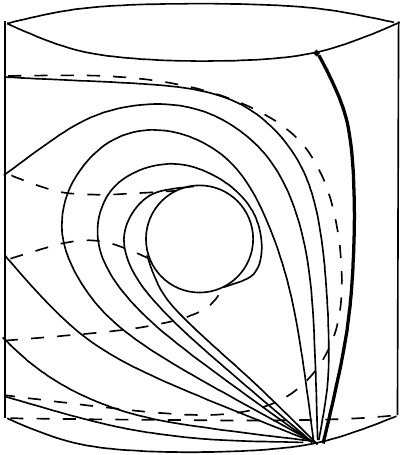}}\put(-130,15){$f_i$}\put(-130,30){$1$}\put(-130,52){$2$}\put(-105,62){$3$}\put(-130,75){$4$}\put(0,5){$b_i$}\end{picture}\end{center}
\caption{Triangulation of a middle handle}\label{middle}\end{figure}

\begin{figure}[H]\begin{center}\begin{picture}(110,120)\scalebox{1}{\includegraphics{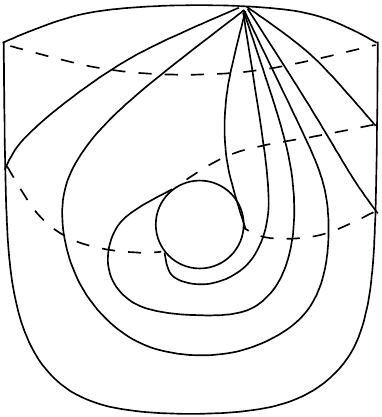}}\put(0,80){$1$}\put(0,55){$2$}\put(-22,20){$3$}\put(-120,68){$4$}\end{picture}\end{center}
\caption{Triangulation of the bottom handle}\label{bottom}\end{figure}

We will describe the additional pieces of triangulation that are needed when we have more than one puncture after we finish the proof for the case of a surface with one puncture.

Recall that $E$ denotes the set of edges of the triangulation. We  describe the vectors in 
${\mathbb Z}^E$ that give the admissible colorings  $f_{P_i}$ and $f_{Q_j}$  corresponding to the curves $P_i$ and $Q_j$ in the triangulation chosen above.
Recall that the coordinates of those vectors are geometric intersection numbers of the curves with the edges of the triangulation. Although Theorem \ref{basecase} requires that we look at the residue classes mod $N$, it turns out that it suffices to work with integers, since the determinant of the change of basis matrix is a power of $2$, which is a unit in ${\mathbb Z}_N$ for any odd $N$. 

The coordinates of ${\mathbb Z}^E$ are ordered in the following way. Imagine as before that the height on the page is a Morse function and the $g$ handles are ordered from top to bottom. The first five coordinates are the intersection numbers with the edges $\{1,2,3,4, b_1\}$ from the top handle as  indicated in Figure \ref{top}. The following $6(g-2)$ coordinates come from intersections with the edges triangulating the consecutive $g-2$ middle handles ordered from top to bottom, where each $6$-tuple consists of intersection numbers with edges  
$\{f_i,1,2,3,4, b_i\}$ as pictured in Figure \ref{middle}. The last four coordinates are the intersection numbers with the edges $\{1,2,3,4\}$ triangulating the bottom handle,  as shown in Figure \ref{bottom}.

We abbreviate vectors of length $4$ that occur repeatedly as consecutive coordinates of curves in our two systems $P$ and $Q$ in the following way.

\begin{equation}\label{lexicon}
\begin{split}
\vec{a}= (1,1,0,1), \vec{b} = (1,0,1,1), \vec{c}=(1,1,2,3), \vec{d}=(3,3,2,1)\\
\vec{e}=(3,2,1,1), \vec{f}=(3,3,2,3), \vec{2}=(2,2,2,2), \vec{0}=(0,0,0,0)
\end{split}
\end{equation}

These vectors satisfy some relations that we will use repeatedly.
\begin{equation}\label{edfrelations}
\begin{split}
\vec{d} =  \vec{e}+\vec{2}-\vec{a}-\vec{b}\\
\vec{e}  =  \vec{2}+\vec{a}+\vec{b}-\vec{c}\\
\vec{f}=\vec{a}+\vec{2}
\end{split}
\end{equation}

The diagrams in Figures \ref{factotum1}, \ref{factotum2} and \ref{factotum3} encode the curves in the families $P$ and $Q$ as projections onto the coordinates corresponding to their intersections with the handles.
\begin{figure}[H]\begin{center}\begin{tabular}{ccc} \scalebox{.67}{\includegraphics{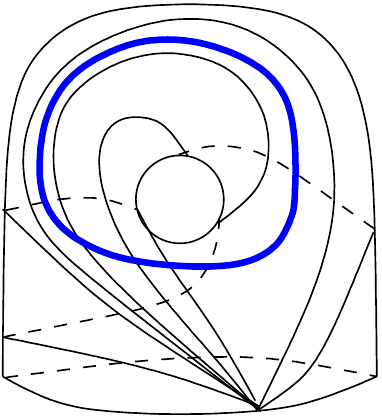}} & \scalebox{.67}{\includegraphics{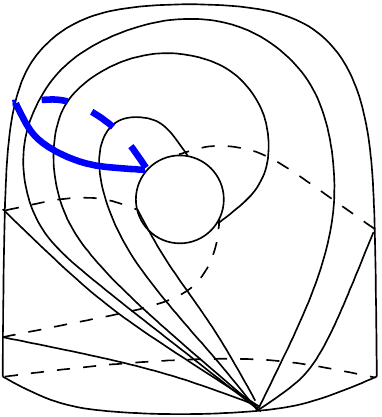}} & \scalebox{.67}{\includegraphics{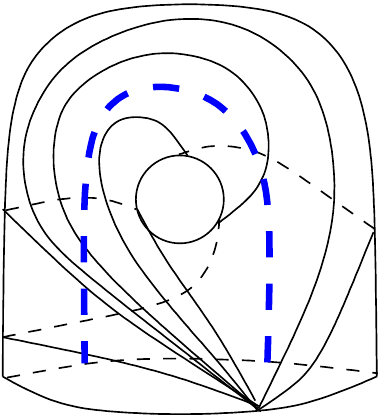}}\\ $11010=\vec{a}0$ & $10110=\vec{b}0$ & $11012=\vec{a}2$ \\ \scalebox{.67}{\includegraphics{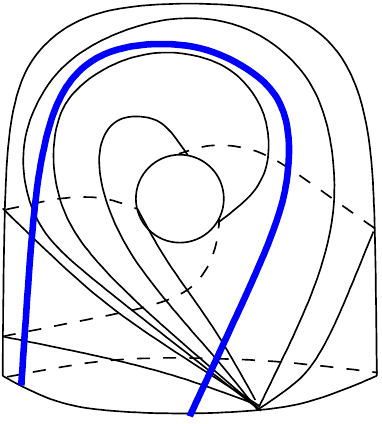}} &\scalebox{.67}{ \includegraphics{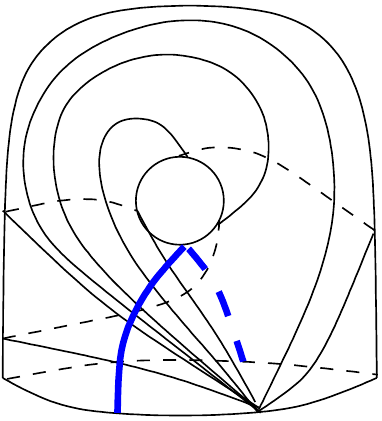}} &\scalebox{.67}{ \includegraphics{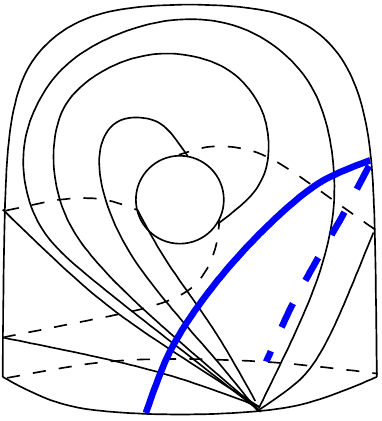}}\\ $33212=\vec{d}2$ & $32112=\vec{e}2$ &$22222=\vec{2}2$ \end{tabular}\caption{Vectors for curves in the top handle}\label{factotum1}\end{center} \end{figure}

\begin{figure}[H]\begin{center}\begin{tabular}{ccc} \scalebox{.67}{\includegraphics{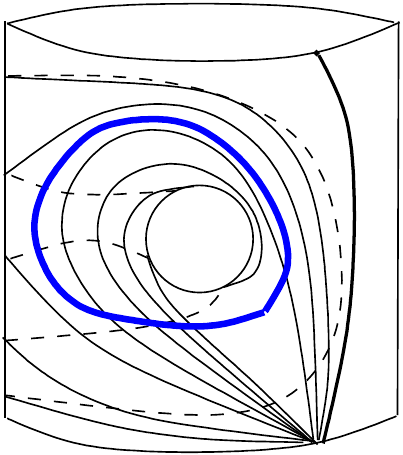}} & \scalebox{.67}{\includegraphics{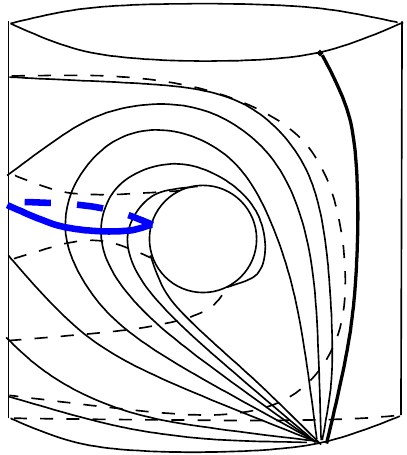}} & \scalebox{.67}{\includegraphics{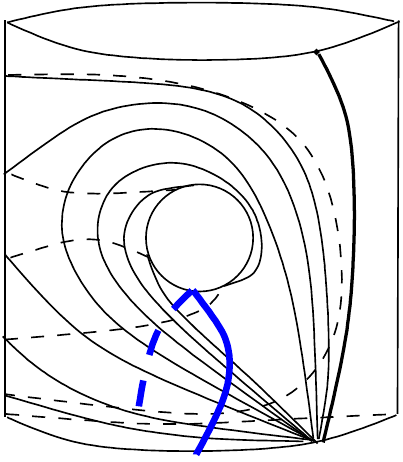}}\\ $011010=0\vec{a}0$ & $010110=0\vec{b}0$ & $232112=2\vec{e}2$ \\ \scalebox{.67}{\includegraphics{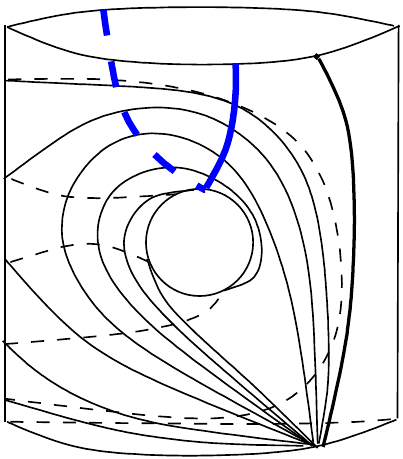}} &\scalebox{.67}{ \includegraphics{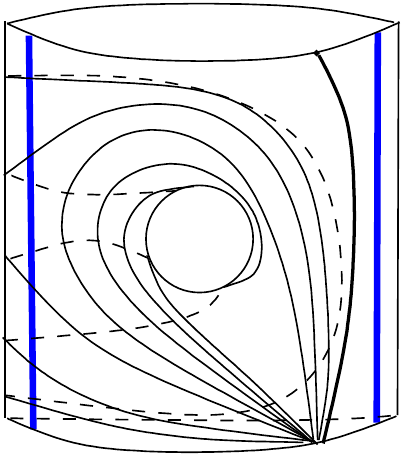}} &\scalebox{.67}{ \includegraphics{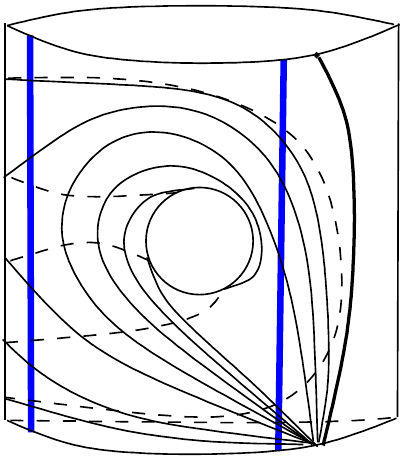}}\\ $210110=2\vec{b}0$ & $211012=2\vec{a}2$ &$433232=4\vec{f}2$\\ \scalebox{.67}{\includegraphics{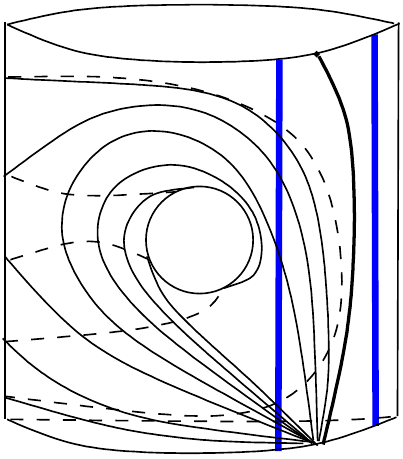}} &\scalebox{.67}{ \includegraphics{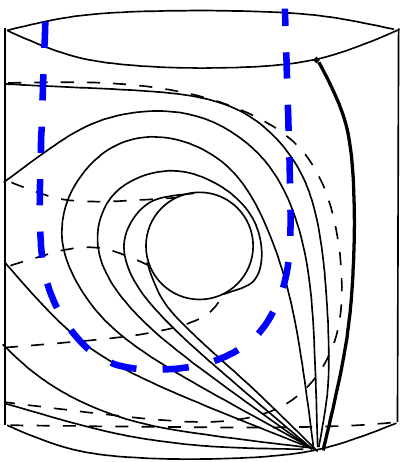}} &\scalebox{.67}{ \includegraphics{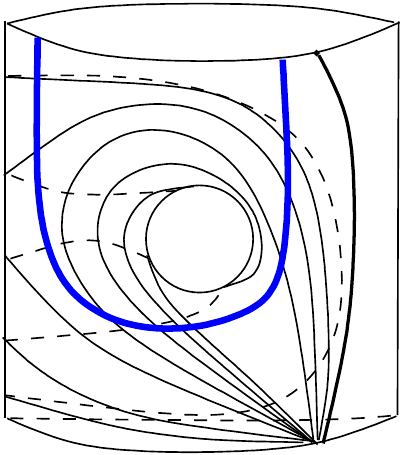}}\\ $222222=2\vec{2}2$ & $211010=2\vec{a}0$ &$211230=2\vec{c}0$\\ \end{tabular}\caption{Vectors for curves in the middle handles}\label{factotum2}\end{center} \end{figure}

\begin{figure}[H]\begin{center}\begin{tabular}{ccc} \scalebox{.67}{\includegraphics{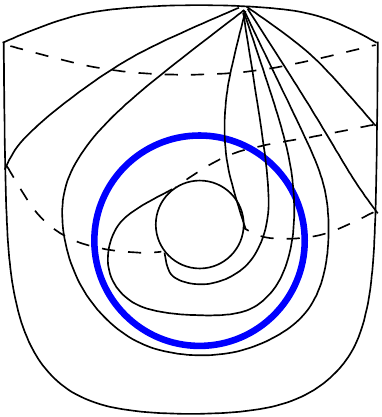}} & \scalebox{.67}{\includegraphics{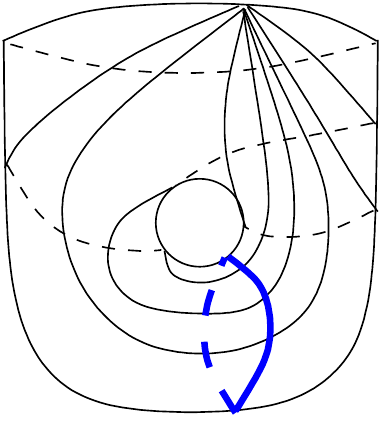}} & \scalebox{.67}{\includegraphics{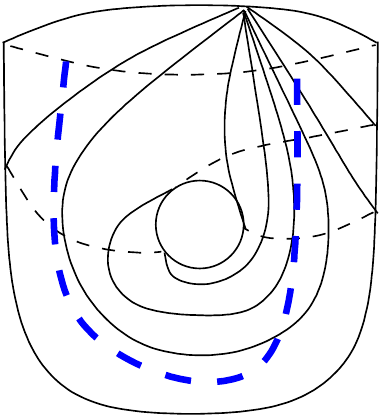}}\\ $1101=\vec{a}$ & $1011=\vec{b}$ & $1101=\vec{a}$ \\ \ &\scalebox{.67}{ \includegraphics{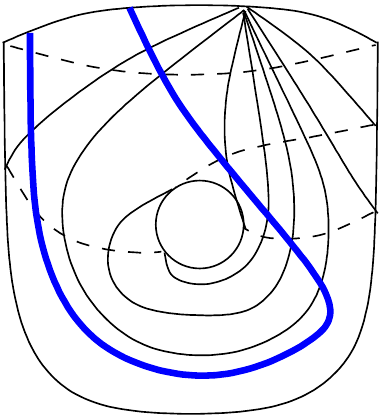}} &\scalebox{.67}{ \includegraphics{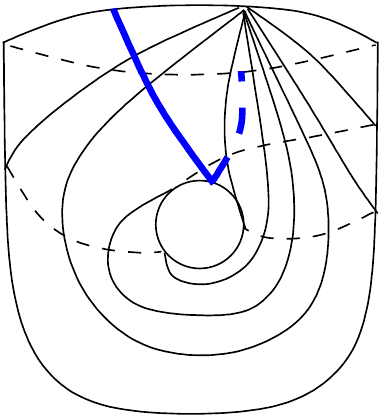}}\\ \ & $1123=\vec{c}$ &$1011=\vec{b}$ \end{tabular}\caption{Vectors for curves in the bottom handle}\label{factotum3}\end{center} \end{figure}

We follow the conventions indicated in Figures \ref{P3} and \ref{Q3} for naming the curves in families $P$ and $Q$ and we order the collection of the curves $P\cup Q$ in $g$ consecutive blocks corresponding to $g$ consecutive handles from top to bottom as follows. The first block corresponding to the top handle consists of  $5$ curves: $a_1$, $b_1$,  $e_1$, $s_1$ and $h_1$. The blocks corresponding to the  middle handles consist of $6$ curves each: $a_i$, $b_i$, $e_i$, $s_i$, $h_i$ and $c_{i-1}$. The block corresponding to the bottom handle has  three curves from $P\cup Q$ and the curve surrounding the puncture: $a_g$, $b_g$, $c_{g-1}$ and $\partial_1$.

We have done explicit computation of the determinant for surfaces of genus $2$ and $3$. Although the matrices are smaller and the computation is easier, one does not see the full pattern of the computation when the genus is increased. Consequently we start by computing the determinant of the transpose of the change of basis matrix for  vectors $f_{P_i}$ and $f_{Q_j}$  for surface of genus $4$ and one puncture, see (\ref{matr}).  Since we are only interested in the determinant, we can proceed using elementary row operations.

Though we are only showing the matrix for genus 4 with one puncture we give a procedure to compute the determinant for any genus. The size of the matrix for a surface of genus $g$ is $(6g-3)\times (6g-3)$. The idea is that after performing some elementary row operations the matrix will be in block  form. The modified matrix starting in the upper left hand corner consists of  a $(5\times 5)$-block, followed by $g-2$ blocks of size $6\times 6$, and finally a $(4\times 4)$-block. Keep in mind that the vectors in the columns labeled $\vec{h}_i$ are of length $4$. Explicit computation yields that the determinant of the  $(5\times 5)$-block is $-2^3$, the determinants of each of the subsequent $g-2$ blocks are equal to  $2^4$ and the determinant of the last block is $2^2$. Thus the determinant of the matrix for a surface of genus $g$ with one puncture equals $-2^{4g-3}$.

The rows of the matrix (\ref{matr}) are labeled by the names of the curves and the entries in the rows are the coordinates of those curves in ${\mathbb{Z}}^E$ ordered as described above and indicated by labels. We label the columns corresponding to the triangulation of the $i$-th handle by 
$\vec{h}_i$ and use the vectors described in (\ref{lexicon}) to denote the coordinates of the curves coming from those handles.
We note that although the notation $h_i$ was also used for some curves in family $P$ it is easy to tell the two objects apart, since the later symbols do not have arrows.

\begin{equation}\label{matr}
\begin{array}{l|cc|ccc|ccc|c}
\ & \vec{h}_1&b_1&f_2 & \vec{h}_2& b_2&f_3&\vec{h}_3&b_3&\vec{h}_4\\
\hline
a_1 &\vec{a}& 0 & 0 & \vec{0} & 0&0&\vec{0}&0&\vec{0} \\
b_1&\vec{b}& 0 & 0 & \vec{0} & 0&0&\vec{0}&0&\vec{0} \\
e_1&\vec{e}&2&2&\vec{b}& 0&0&\vec{0}&0&\vec{0} \\
s_1&\vec{2}&2&2&\vec{b}& 0&0&\vec{0}&0&\vec{0} \\
h_1&\vec{a}&2&2&\vec{a}& 0&0&\vec{0}&0&\vec{0} \\
\hline
a_2&\vec{0}&0&0&\vec{a}& 0&0&\vec{0}&0&\vec{0} \\
b_2&\vec{0}&0&0&\vec{b}& 0&0&\vec{0}&0&\vec{0} \\
e_2&\vec{0}&0&2&\vec{e}&2&2&\vec{b}&0&\vec{0}\\
s_2&\vec{2}&2&2&\vec{2}&2&2&\vec{b}&0&\vec{0}\\
h_2&\vec{a}&2&2&\vec{a}&2&2&\vec{a}&0&\vec{0}\\
c_1&\vec{d}&2&2&\vec{c}& 0&0&\vec{0}&0&\vec{0} \\
\hline
a_3 &\vec{0}& 0 & 0 & \vec{0} & 0&0&\vec{a}&0&\vec{0} \\
b_3 &\vec{0}& 0 & 0 & \vec{0} & 0&0&\vec{b}&0&\vec{0} \\
e_3&\vec{0}&0&0&\vec{0}&0&2&\vec{e}&2&\vec{b}\\
s_3&\vec{2}&2&2&\vec{2}&2&2&\vec{2}&2&\vec{b}\\
h_3&\vec{a}&2&2&\vec{a}&2&2&\vec{a}&2&\vec{a}\\
c_2&\vec{d}&2&4&\vec{f}& 2&2&\vec{c}&0&\vec{0} \\
\hline
a_4 &\vec{0}& 0 & 0 & \vec{0} & 0&0&\vec{0}&0&\vec{a} \\
b_4&\vec{0}& 0 & 0 & \vec{0} & 0&0&\vec{0}&0&\vec{b} \\
c_3&\vec{d}&2&4&\vec{f}& 2&4&\vec{f}&2&\vec{c} \\
\partial_1&\vec{2}&2&2&\vec{2}& 2&2&\vec{2}&2&\vec{2} \\
\end{array}\end{equation}

For a surface with genus $g>4$ and one puncture the matrix  is similarly divided into blocks and columns corresponding to handles from top down.
The entries in the rows $a_1$, $b_1$, $e_1$, $s_1$ and $h_1$  for the top handle are the same as in  the matrix (\ref{matr}) in the first $21$ columns, with zeroes  in the additional columns. The same is true for the rows corresponding to the top middle handle.

In general, in all but the last  middle handle the row vectors of the matrix are as follows, with the index $i$ ranging from $2$ to $g-2$:
row vectors  $a_i$ and $b_i$ have $\vec{a}$ and $\vec{b}$ respectively in the column labeled $\vec{h}_i$ and zeroes in every other column; 
row $e_i$ has zeroes  in columns $\vec{h}_1$ through $b_{i-1}$ followed by $(2, \vec{e}, 2,2,\vec{b})$ in columns $f_i$, $\vec{h}_i$,$b_i$, $f_{i+1}$,$\vec{h}_{i+1}$ and zeroes in the remaining columns;
 row  $s_i$ has twos  in columns $h_1$ through $f_i$, $\vec{b}$ in the column  labeled $\vec{h}_{i+1}$ and zeroes in the remaining columns; 
row  $h_i$ has $(\vec{a},2)$ in columns $(\vec{h}_1,b_1)$, followed by $(2,\vec{a},2)$ in columns $(f_j,\vec{h}_j,b_j)$ for $ j=2,\dots, i$, then $(2,\vec{a})$ in $(f_{i+1},\vec{h}_{i+1})$, and zeroes in the remaining columns;
 row $c_i$ has $(\vec{d}, 2)$ in columns $(\vec{h}_1,b_1)$, then $(4,\vec{f},2)$ in columns $(f_j,\vec{h}_j,b_j)$ for $j=2,\dots i$, followed by $(2,\vec{c},0)$ in columns  $(f_{i+1}, \vec{h}_{i+1}, b_{i+1})$ followed by zeroes in the remaining columns.
 
 The last middle handle has the rows  $a_{g-1}$, $b_{g-1}$ as in all the other handles, with $\vec{a}$ and respectively $\vec{b}$ in the $\vec{h}_{g-1}$  column and zeroes elsewhere.
 The row  $e_{g-1}$ has zeroes  in columns $\vec{h}_1$ through $b_{g-2}$ followed by 
 $(2, \vec{e}, 2)$  in columns $(f_{g-1}, \vec{h}_{g-1} b_{g-1})$ and $\vec{b}$ in column $\vec{h}_g$. 
 The row  $s_{g-1}$  has $2$ in all columns preceding the column $\vec{h}_g$ where the entry is 
 $\vec{b}$.
 The row  $h_{g-1}$ has $(\vec{a},2)$ in columns $(h_1,b_1)$, followed by $(2,\vec{a},2)$ in columns $(f_j,\vec{h}_j,b_j)$ for $ j=2,\dots, g-1$,  and $\vec{a}$ in the $\vec{h}_g$ column.
 The row  $c_{g-2}$ is the same as in the preceding middle handles, with  $(\vec{d}, 2)$ in columns 
 $(\vec{h}_1,b_1)$, then $(4,\vec{f},2)$ in columns $(f_j,\vec{h}_j,b_j)$ for $j=2,\dots g-2$, and $(2,\vec{c},0,\vec{0})$  in the columns  $(f_{g-1}, \vec{h}_{g-1}, b_{g-1}, \vec{h}_g)$.
 
 Finally, in the bottom handle,  rows $a_g$ and $b_g$ have zeroes in every column but $\vec{h}_g$ where they have $\vec{a}$ and $\vec{b}$ respectively; 
 the entries in row $c_{g-1}$ are   $(\vec{d}, 2)$ in columns 
 $(\vec{h}_1,b_1)$, then $(4,\vec{f},2)$ in columns $(f_j,\vec{h}_j,b_j)$ for $j=2,\dots g-1$, and $\vec{c}$ in column $\vec{h}_g$;
all entries in the last row labeled $\partial_1$ are $2$.

When we perform the elementary row operations that yield a matrix in block  form we work consecutively with blocks of the matrix corresponding to the handles. This allows us to see how to generalize this procedure for a surface of arbitrary genus.

The rows $a_i$ and $b_i$  of the matrix (\ref{matr}) consist of zeroes except for the columns corresponding to the handle that the curves live in.  We use them to clear out occurrences of $\vec{a}$ and $\vec{b}$ in those columns in all other rows. In order to avoid ambiguity in naming, once we have altered a row, we refer to it by its row number. For instance the row labeled $s_1$ in the first matrix becomes $r_4$ in the second matrix.

\begin{equation}\label{matr1}
\begin{array}{l|cc|ccc|ccc|c}
\ & \vec{h}_1&b_1&f_2 & \vec{h}_2& b_2&f_3&\vec{h}_3&b_3&\vec{h}_4\\
\hline
a_1 &\vec{a}& 0 & 0 & \vec{0} & 0&0&\vec{0}&0&\vec{0} \\
b_1&\vec{b}& 0 & 0 & \vec{0} & 0&0&\vec{0}&0&\vec{0} \\
r_3&\vec{e}&2&2&\vec{0}& 0&0&\vec{0}&0&\vec{0} \\
r_4&\vec{2}&2&2&\vec{0}& 0&0&\vec{0}&0&\vec{0} \\
r_5&\vec{0}&2&2&\vec{0}& 0&0&\vec{0}&0&\vec{0} \\
\hline
a_2&\vec{0}&0&0&\vec{a}& 0&0&\vec{0}&0&\vec{0} \\
b_2&\vec{0}&0&0&\vec{b}& 0&0&\vec{0}&0&\vec{0} \\
r_8&\vec{0}&0&2&\vec{e}&2&2&\vec{0}&0&\vec{0}\\
r_9&\vec{2}&2&2&\vec{2}&2&2&\vec{0}&0&\vec{0}\\
r_{10}&\vec{0}&2&2&\vec{0}&2&2&\vec{0}&0&\vec{0}\\
c_1&\vec{d}&2&2&\vec{c}& 0&0&\vec{0}&0&\vec{0} \\
\hline
a_3 &\vec{0}& 0 & 0 & \vec{0} & 0&0&\vec{a}&0&\vec{0} \\
b_3 &\vec{0}& 0 & 0 & \vec{0} & 0&0&\vec{b}&0&\vec{0} \\
r_{14}&\vec{0}&0&0&\vec{0}&0&2&\vec{e}&2&\vec{0}\\
r_{15}&\vec{2}&2&2&\vec{2}&2&2&\vec{2}&2&\vec{0}\\
r_{16}&\vec{0}&2&2&\vec{0}&2&2&\vec{0}&2&\vec{0}\\
c_2&\vec{d}&2&4&\vec{f}& 2&2&\vec{c}&0&\vec{0} \\
\hline
a_4 &\vec{0}& 0 & 0 & \vec{0} & 0&0&\vec{0}&0&\vec{a} \\
b_4&\vec{0}& 0 & 0 & \vec{0} & 0&0&\vec{0}&0&\vec{b} \\
c_3&\vec{d}&2&4&\vec{f}& 2&4&\vec{f}&2&\vec{c} \\
\partial_1&\vec{2}&2&2&\vec{2}& 2&2&\vec{2}&2&\vec{2} \\
\end{array}\end{equation}

 The only entries that keep the matrix (\ref{matr1}) from being lower-triangular are collections of three $2$'s in the  columns labeled $f_i$  above the blocks corresponding to the $i$-th handles.  We proceed inductively on the index of the $f_i$. Hence the first step is to clear the $2$'s in the column $f_2$ that are in rows three through five.  To this end, we focus on the minor consisting of rows $1$ through $11$ and columns $1$ through $12$,  noting that the entries in the columns $13$ and above in those rows are all $0$.

\begin{equation}\label{matr2}
\begin{array}{l|cc|ccc|c}
  \ & \vec{h}_1&b_1&f_2 & \vec{h}_2& b_2&f_3\\
\hline
a_1 &\vec{a}& 0 & 0 & \vec{0} & 0&0\\
b_1&\vec{b}& 0 & 0 & \vec{0} & 0&0\\
r_3&\vec{e}&2&2&\vec{0}& 0&0 \\
r_4&\vec{2}&2&2&\vec{0}& 0&0 \\
r_5&\vec{0}&2&2&\vec{0}& 0&0 \\
\hline
a_2&\vec{0}&0&0&\vec{a}& 0&0 \\
b_2&\vec{0}&0&0&\vec{b}& 0&0 \\
r_8&\vec{0}&0&2&\vec{e}&2&2\\
r_9&\vec{2}&2&2& \vec{2}&2&2\\
r_{10}&\vec{0}&2&2&\vec{0}&2&2\\
c_1&\vec{d}&2&2&\vec{c}& 0&0 \\
\end{array}\end{equation}

First we subtract row $r_4$ from row $r_9$,  subtract row $r_5$ from $r_{10}$, and subtract  the combination  $r_3+r_4-r_5-a_1-b_1$ from row $c_1$. Here we are using the relationship $\vec{d}=\vec{e}+\vec{2}-\vec{a}-\vec{b}$ (see (\ref{edfrelations})). Our goal is to have zeroes in the $\vec{h}_1$ and $b_1$ columns in all rows $a_2$ through $r_{11}$.

\begin{equation}\label{matr3}
\begin{array}{l|cc|ccc|c}
  \ & \vec{h}_1&b_1&f_2 & \vec{h}_2& b_2&f_3\\
\hline
a_1 &\vec{a}& 0 & 0 & \vec{0} & 0&0\\
b_1&\vec{b}& 0 & 0 & \vec{0} & 0&0\\
r_3&\vec{e}&2&2&\vec{0}& 0&0 \\
r_4&\vec{2}&2&2&\vec{0}& 0&0 \\
r_5&\vec{0}&2&2&\vec{0}& 0&0 \\
\hline
a_2&\vec{0}&0&0&\vec{a}& 0&0 \\
b_2&\vec{0}&0&0&\vec{b}& 0&0 \\
r_8&\vec{0}&0&2&\vec{e}&2&2\\
r_9&\vec{0}&0&0&\vec{2}&2&2\\
r_{10}&\vec{0}&0&0&\vec{0}&2&2\\
r_{11}&\vec{0}&0&0&\vec{c}& 0&0 \\
\end{array}\end{equation}

 Using the fact that $\vec{e}-\vec{2}-\vec{a}-\vec{b}+\vec{c}=\vec{0}$, we have
\begin{equation} r_8-a_2-b_2-r_9+r_{11}=(\vec{0}02\vec{0}00)\end{equation}

We can subtract this quantity from rows $r_3$, $r_4$ and $r_5$ of matrix (\ref{matr3}) to get :

\begin{equation}\label{matr4}
\begin{array}{l|cc|ccc|c}
  \ & \vec{h}_1&b_1&f_2 & \vec{h}_2& b_2&f_3\\
\hline
a_1 &\vec{a}& 0 & 0 & \vec{0} & 0&0\\
b_1&\vec{b}& 0 & 0 & \vec{0} & 0&0\\
r_3&\vec{e}&2&0&\vec{0}& 0&0 \\
r_4&\vec{2}&2&0&\vec{0}& 0&0 \\
r_5&\vec{0}&2&0&\vec{0}& 0&0 \\
\hline
a_2&\vec{0}&0&0&\vec{a}& 0&0 \\
b_2&\vec{0}&0&0&\vec{b}& 0&0 \\
r_8&\vec{0}&0&2&\vec{e}&2&2\\
r_9&\vec{0}&0&0&\vec{2}&2&2\\
r_{10}&\vec{0}&0&0&\vec{0}&2&2\\
r_{11}&\vec{0}&0&0&\vec{c}& 0&0 \\
\end{array}\end{equation}

Notice that the determinant of the $5\times5$ block in the upper left hand corner is $-2^3$. Since we are working modulo $N$ where $N$ is odd, this is a unit, and the first five rows form a basis for the subspace of ${\mathbb Z}^E_N$ that only has non-zero entries in the first $5$ coordinates. 
 We can add combinations of the first five rows to clear out the non-zero entries in first five columns of all the lower rows of the whole matrix. This means that in the next part of the argument we can concentrate solely  on the entries in columns $6$ and up. 
   
Consider the block of the modified matrix starting at row $6$ and column 6 and going across to column 17 and down to row 17.  In our example, the $18$-th through $21$-st columns (labeled as $\vec{h}_4$) are all zeroes in this range. However in the case that the genus of the surface is greater than $4$ the rows $r_{14}$, $r_{15}$ and $r_{16}$ will have $2$'s in the $18$-th column.  This does not  change anything since 
the linear combinations we take cancel in the $18$-th column.
\begin{equation}\label{matr5}
\begin{array}{l|ccc|ccc}
\ &f_2&\vec{h}_2&b_2&f_3&\vec{h_3}&b_3 \\
\hline
 a_2&0&\vec{a}& 0&0&\vec{0}&0\\
b_2&0&\vec{b}& 0&0&\vec{0}&0 \\
r_8&2&\vec{e}&2&2&\vec{0}&0\\
r_9&0&\vec{2}&2&2&\vec{0}&0\\
r_{10}&0&\vec{0}&2&2&\vec{0}&0\\
r_{11}&0&\vec{c}& 0&0&\vec{0}&0\\
\hline
a_3 & 0 & \vec{0} & 0&0&\vec{a}&0 \\
b_3 & 0 & \vec{0} & 0&0&\vec{b}&0 \\
r_{14}&0&\vec{0}&0&2&\vec{e}&2\\
r_{15}&2&\vec{2}&2&2&\vec{2}&2\\
r_{16}&2&\vec{0}&2&2&\vec{0}&2\\
c_2&4&\vec{f}& 2&2&\vec{c}&0 \\
\end{array}\end{equation}

Our goal is to use row operations to have this matrix equal to the result of appending a column that has all but one entry equal to $0$ to the left hand side of the matrix (\ref{matr4}), thus creating an iterative procedure.

The first step is to subtract $r_9$ from $r_{15}$, and $r_{10}$ from $r_{16}$.
This echoes  subtracting  row $r_4$ from row $r_9$, and row $r_5$ from $r_{10}$  in the matrix (\ref{matr2}).

\begin{equation}\label{matr6}
\begin{array}{l|ccc|ccc}
\ &f_2&\vec{h}_2&b_2&f_3&\vec{h_3}&b_3 \\
\hline
 a_2&0&\vec{a}& 0&0&\vec{0}&0\\
b_2&0&\vec{b}& 0&0&\vec{0}&0 \\
r_8&2&\vec{e}&2&2&\vec{0}&0\\
r_9&0&\vec{2}&2&2&\vec{0}&0\\
r_{10}&0&\vec{0}&2&2&\vec{0}&0\\
r_{11}&0&\vec{c}& 0&0&\vec{0}&0\\
\hline
a_3 & 0 & \vec{0} & 0&0&\vec{a}&0 \\
b_3 & 0 & \vec{0} & 0&0&\vec{b}&0 \\
r_{14}&0&\vec{0}&0&2&\vec{e}&2\\
r_{15}&2&\vec{0}&0&0&\vec{2}&2\\
r_{16}&2&\vec{0}&0&0&\vec{0}&2\\
c_2&4&\vec{f}& 2&2&\vec{c}&0 \\
\end{array}\end{equation}

Next we replace row $r_{15}$ by $r_{15}-r_8+r_9+a_2+b_2-r_{11}$, row $r_{16}$ by $r_{16}-r_8+r_9+a_2+b_2-r_{11}$ and row $c_2$ by $c_2+a_2+2b_2-2r_8+r_9-2r_{11}$.  Using the identities $\vec{e}-\vec{2}-\vec{a}-\vec{b}+\vec{c}=\vec{0}$, $\vec{f}-\vec{e}+\vec{b}-\vec{c}=\vec{0}$,  and $\vec{f}=\vec{a}+\vec{2}$ we get:

\begin{equation}\label{matr8}
\begin{array}{l|ccc|ccc}
\ &f_2&\vec{h}_2&b_2&f_3&\vec{h_3}&b_3 \\
\hline
 a_2&0&\vec{a}& 0&0&\vec{0}&0\\
b_2&0&\vec{b}& 0&0&\vec{0}&0 \\
r_8&2&\vec{e}&2&2&\vec{0}&0\\
r_9&0&\vec{2}&2&2&\vec{0}&0\\
r_{10}&0&\vec{0}&2&2&\vec{0}&0\\
r_{11}&0&\vec{c}& 0&0&\vec{0}&0\\
\hline
a_3 & 0 & \vec{0} & 0&0&\vec{a}&0 \\
b_3 & 0 & \vec{0} & 0&0&\vec{b}&0 \\
r_{14}&0&\vec{0}&0&2&\vec{e}&2\\
r_{15}&0&\vec{0}&0&0&\vec{2}&2\\
r_{16}&0&\vec{0}&0&0&\vec{0}&2\\
r_{17}&0&\vec{0}& 0&0&\vec{c}&0 \\
\end{array}\end{equation}
Note that the column $f_2$ has $2$ in row $r_8$ and zeroes in all other rows.
The matrix (\ref{matr8}) coincides with appending this column on the left to matrix (\ref{matr4}) as desired. We now proceed as before.  

Putting all the pieces together, the block form of the matrix obtained by elementary row operations from matrix (\ref{matr}) is

\begin{equation}\label{blockmatrix}
\begin{array}{l|cc|ccc|ccc|c}
\ & \vec{h}_1&b_1&f_2 & \vec{h}_2& b_2&f_3&\vec{h}_3&b_3&\vec{h}_4\\
\hline
a_1 &\vec{a}& 0 & 0 & \vec{0} & 0&0&\vec{0}&0&\vec{0} \\
b_1&\vec{b}& 0 & 0 & \vec{0} & 0&0&\vec{0}&0&\vec{0} \\
r_3&\vec{e}&2&0&\vec{0}& 0&0&\vec{0}&0&\vec{0} \\
r_4&\vec{2}&2&0&\vec{0}& 0&0&\vec{0}&0&\vec{0} \\
r_5&\vec{0}&2&0&\vec{0}& 0&0&\vec{0}&0&\vec{0} \\
\hline
a_2&\vec{0}&0&0&\vec{a}& 0&0&\vec{0}&0&\vec{0} \\
b_2&\vec{0}&0&0&\vec{b}& 0&0&\vec{0}&0&\vec{0} \\
r_8&\vec{0}&0&2&\vec{e}&2&0&\vec{0}&0&\vec{0}\\
r_9&\vec{0}&0&0&\vec{2}&2&0&\vec{0}&0&\vec{0}\\
r_{10}&\vec{0}&0&0&\vec{0}&2&0&\vec{0}&0&\vec{0}\\
r_{11}&\vec{0}&0&0&\vec{c}& 0&0&\vec{0}&0&\vec{0} \\
\hline
a_3 &\vec{0}& 0 & 0 & \vec{0} & 0&0&\vec{a}&0&\vec{0} \\
b_3 &\vec{0}& 0 & 0 & \vec{0} & 0&0&\vec{b}&0&\vec{0} \\
r_{14}&\vec{0}&0&0&\vec{0}&0&2&\vec{e}&2&\vec{0}\\
r_{15}&\vec{0}&0&0&\vec{0}&0&0&\vec{2}&2&\vec{0}\\
r_{16}&\vec{0}&0&0&\vec{0}&0&0&\vec{0}&2&\vec{0}\\
r_{17}&\vec{0}&0&0&\vec{0}& 0&0&\vec{c}&0&\vec{0} \\
\hline
a_4 &\vec{0}& 0 & 0 & \vec{0} & 0&0&\vec{0}&0&\vec{a} \\
b_4&\vec{0}& 0 & 0 & \vec{0} & 0&0&\vec{0}&0&\vec{b} \\
r_{20}&\vec{0}&0&0&\vec{0}& 0&0&\vec{0}&0&\vec{c} \\
r_{21}&\vec{0}&0&0&\vec{0}& 0&0&\vec{0}&0&\vec{2} \\
\end{array}\end{equation}

Thus for a surface of  genus $4$  the determinant of (\ref{matr}) equals $-2^{13}$.
The procedure can be repeated as many times as needed in the general case, when the matrix corresponds to a surface of larger genus $g$, on the rest of the blocks corresponding to middle handles.   In general for a surface of genus $g$ with one puncture the determinant is equal to $-2^{4g-3}$.   

Since $2$ is relatively prime to all odd numbers, the matrix of resides mod $N$ is invertible.  That implies that the residues of the curves  $P\cup Q\cup\{\partial_1\}$ are a basis for $\mathbb{Z}_N^E$, and we can apply Theorem \ref{basecase}.
 This concludes the proof for surfaces of genus $g>1$ with one puncture.

If the surface has more than one puncture we put all the additional punctures on the bottom handle. We
slice the bottom handle horizontally into pieces.  We continue to build modularly, so if the surface has genus $g$ we have one top handle, $g-2$ middle handles, one bottom handle, and then a planar surface. The planar surface is decomposed into a family of annuli with a point  (a puncture) removed from each boundary component, and a disc with one point removed from its boundary and one from its interior. The  bottom handle is triangulated in the same way as the middle handles. The planar surface is triangulated as a union of triangulations of the annuli and the punctured disk. The  annuli are triangulated with two interior edges, which we denote by $v_k$ and $d_k$ along with the edges in their boundary curves, denoted $b_{k-1},$ and $b_k$. The punctured disk  is triangulated by one folded triangle with a folded edge denoted $f_o$ and the boundary edge $b_j$. The triangulation of an annular piece is shown in Figure \ref{annulus}, and the triangulated punctured disk is pictured in Figure  \ref{disk}.

\begin{figure}[H]\begin{center}\begin{picture}(82,84)\scalebox{1}{\includegraphics{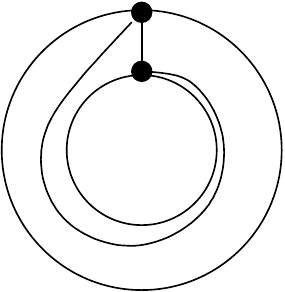}}\put(-40,70){$v_k$}\put(-80,48){$d_k$}\put(-44,25){$b_k$}\end{picture}\end{center}
\caption{Triangulation of an annular piece}\label{annulus}\end{figure}

\begin{figure}[H]\begin{center}\begin{picture}(80,83)\scalebox{1}{\includegraphics{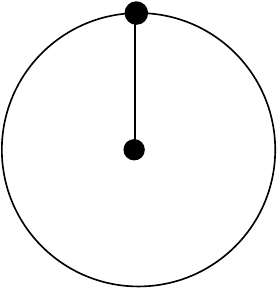}}\put(-38,55){$fo$}\end{picture}\end{center}
\caption{Triangulation of a punctured disk}\label{disk}\end{figure}

We need to add more curves to the families $P$ and $Q$ in order to have  pants decompositions of a multiply punctured surface. We add one curve to each family for each additional puncture. 
For a surface $F$ of genus $g\geq 2$ with one puncture we have 
\begin{equation}P=\{ a_1,\dots a_g,c_1,\dots c_{g-1}, h_1,\dots h_{g-1} \},\end{equation}
and 
\begin{equation}
Q= \{ b_1,\dots b_g, e_1,\dots e_{g-1},s_1,\dots, s_{g-1} \}
\end{equation}
as described above and shown in Figures \ref{P3} and \ref{Q3} for a surface of genus $3$.
For $p$ punctures we add $p-1$ additional curves to each family. The curves $\{c_g, x_1,\dots, x_{p-2}\}$ added to the family $P$ can be described as follows. 
Curve $c_g$ is a parallel copy of the curve $c_{g-1}$ slid across the last puncture so that it does not intersect any other curves in family $P$. Curves 
$\{x_1,\dots,x_{p-2}\}$ are a nested family  that surround an increasing number of punctures, with $x_1$ containing $2$ punctures, and $x_{p-2}$ containing all but the last puncture. One can think of curves $x_i$ as parallel copies of the curve $\partial_1$ around the original puncture, slid across the subsequent punctures. Figure \ref {Ppunct} shows the parts of those curves lying in the bottom handle and in the planar surface, for a surface with $4$ punctures.

\begin{figure}[H]\begin{center}\begin{picture}(144,132)\includegraphics{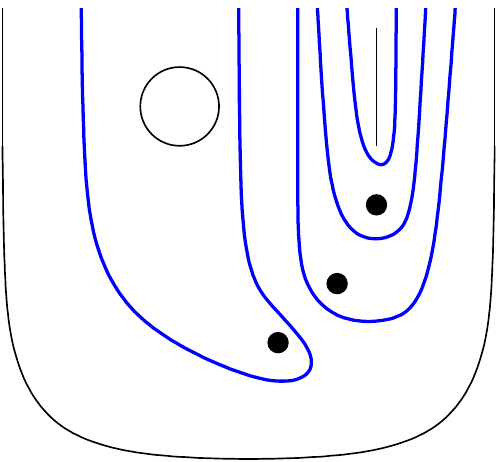}\put(-90,37){$c_g$}\put(-25,50){$x_2$}\put(-40,55){$x_1$}\put(-43,130){$\partial_1$}\end{picture}\end{center}
\caption{Additional curves in the $P$ family for $4$ punctures}\label{Ppunct}
\end{figure}

We also add curves $\{s_g,u_1,\dots,u_{p-2}\}$ to the family $Q$. The curve $s_g$ is a parallel copy of the curve $s_{g-1}$ slid across the first puncture. Curves $u_1,\dots,u_{p-2}$ are a nested family  surrounding a decreasing number of punctures, starting with all the additional $p-1$ punctures, all the way down to only containing the last two punctures. All the $u_i$ curves lie in the planar surface, and are shown in Figure \ref{Qpunct} for a surface with $4$ punctures.

\begin{figure}[H]\begin{center}\begin{picture}(144,132)\put(70,102){$s_g$}\put(70,75){$u_1$}\put(70,55){$u_2$}\includegraphics{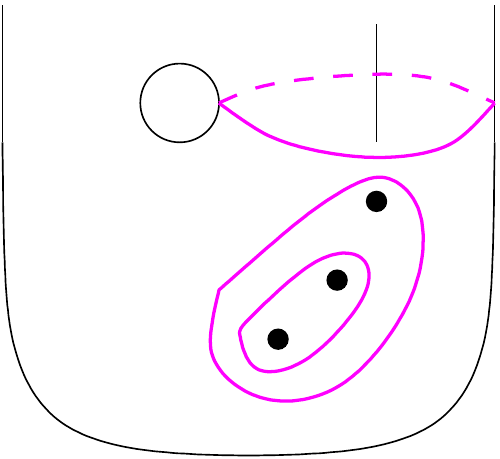}\end{picture}\end{center}
\caption{Additional curves in the $Q$ family for $4$ punctures}\label{Qpunct}
\end{figure}

 If there are just two punctures the argument is a simple extension of what has already been done, so we focus on the case where there are at least three punctures. The proof is similar to the one for a surface with one puncture.
 We calculate the determinant of a matrix whose entries are the geometric intersection numbers of the curves $P\cup Q\cup\{\partial_1,\dots,\partial_p\}$ with the edges of the triangulation. It turns out that this is again a power of $2$. This implies that the change of basis matrix from the standard basis for ${\mathbb{Z}}_N^E$ to the basis consisting of the residues of the admissible colorings corresponding to curves $P\cup Q\cup\{\partial_1,\dots,\partial_p\}$ is invertible.

  The size of the matrix  for a surface of genus $g$ with $p$ punctures is $(6g+3p-6)\times (6g+3p-6)$. We divide the matrix into blocks corresponding to the handles from top down as we did for a surface with one puncture. We have  $6$ columns for the bottom handle,  and  $3p-5$ columns for the planar surface. Since the bottom handle looks now like a middle handle, its block of columns consists of edges $f_g, \vec{h}_g,b_g$, rather than just $\vec{h}_g$. The subsequent columns correspond to the edges triangulating the annular regions from top down: $d_1,v_1,b_{g+1}$,...,$d_{p-2},v_{p-2},b_{g+p-2}$ and finally the folded edge $fo$. 
 The rows corresponding to the bottom handle are ordered as follows:
 $a_g$, $b_g$, $c_{g-1}$, followed by the rows corresponding to the new vectors $s_g$, $c_g$ and the curve surrounding the top puncture $\partial_1$. 
 We group the two families of additional nested curves surrounding the increasing numbers of punctures with the single curves surrounding the additional punctures in one block: $u_1,\dots,u_{p-2}$, $x_1,\dots,x_{p-2}$, and $\partial_2,\dots,\partial_{p}$.
 
 Notice that the curves in the top $g-2$ blocks corresponding to the top $g-2$ handles do not intersect any of the edges starting with $f_g$ thus their entries in the last $3p+1$ columns are all $0$. Recall also that the elementary row operations that reduced that part of the matrix to the triangular block form did not involve any rows corresponding to the bottom handle.
 Thus we can concentrate only on the blocks of rows corresponding to the last two handles and the planar surface: the block of $6$ rows for the $g-1$-st handle, the block of $6$ rows for the bottom handle and the block of rows corresponding to the $u_i,x_i$ and $\partial_i$ curves. 
 
The block of rows for the $g-1$-st handle is the same as for a surface with $1$ puncture, with zeroes filling up the additional columns. The rows for the  bottom handle are as follows: the rows for the $a_g$ and the $b_g$ curves have additional zeroes in the new columns; the row for the $c_{g-1}$ curve has the same pattern as the rows  $c_i$ with $i<g-1$; the row for the curve $s_g$ has zeroes in the columns $\vec{h}_1$ through $b_{g-1}$, 
then $(2,\vec{e}, 2,1,1)$ in the columns $ (f_g,\vec{h}_g, b_g,d_1,v_1)$ and zeroes in the remaining columns; the row for the curve $c_g$ is the same as for the curve $c_{g-1}$ up to the column $b_{g-1}$, then has $(4,\vec{f},2)$ in the columns $(f_g,\vec{h}_g,b_g)$, and the vector $(2,0,2)$ in each sequence of columns $(d_i,v_i,b_{g+i})$, ending with $1$ in the column $f_o$; the row for the curve $\partial_1$ surrounding the top puncture has twos in the columns $\vec{h}_1$ through $b_g$, $(1,1)$ in columns $(d_1,v_1)$ and zeroes in the remaining columns.

The rows in the block corresponding to the planar surface are very simple. The row  for the curve $u_i$ has $1$ in the columns $d_i$ and $v_i$, and zeroes in the remaining columns.
The row for the curve $x_i$ has $2$ in columns $\vec{h}_1$ up to $b_g$, followed by $(2,0,2)$  in columns $(d_j,v_j,b_{g+j})$ for $j=1,\dots,i$, ending with $(1,1)$ in columns $(d_{i+1}, v_{i+1})$ and $0$ in the subsequent columns, unless $i=p-2$, in which case the last entry is in the column $f_o$ and it is equal to $1$.
The row for the curve $\partial_i$ surrounding the $i$-th puncture for $i=2,\dots,p-2$ has $(1,1,2,1,1,0)$ in columns $(d_{i-1},v_{i-1},b_{g+1},d_i,v_i,b_{g+2})$ and zeroes in all other columns, row vector for the curve $\partial_{p-1}$ has $(1,1,2,1)$ in the last four columns $(d_{p-2},v_{p-2}, b_{g+p-2},f_o)$ and zeroes in all preceding columns, finally the row vector $\partial_p$  for the curve surrounding the bottom puncture has $1$ in the column $f_o$ and zeroes in all other columns.

Observe that we can perform the same iterative procedures as for the matrix for a surface with $1$ puncture to clear all entries up to column $b_{g-2}$ for all rows in the  blocks corresponding to the last middle handle and to the bottom handle. We can also use elementary row operations to clear out the twos appearing in columns $\vec{h}_1,\dots, b_{g-2}$ in row vectors for all the $x_i$ curves, since row vectors for the top $g-2$ handles are linearly independent and have zeroes appearing in columns $f_{g-1}$ and higher. Furthermore, we can assume that we performed the necessary elementary row operations so  that the rows of the block corresponding to the $g-1$ handle look like the analogous rows of the matrix (\ref{blockmatrix}).

We write out the relevant blocks of the matrix for a surface with $4$ punctures and use this example to illustrate the general procedure to reduce the matrix to block form. Since we are assuming that the row vectors for curves $s_{g-1}$, $h_{g-1}$ and $c_{g-2}$ have been modified already, we label those $r_4$, $r_5$ and $r_6$, counting just the rows appearing on the page.

\begin{equation}\begin{array}{l|ccc|ccc|ccccccc}\label{pmatr}
\ & f_{g-1} & \vec{h}_{g-1} & b_{g-1} & f_g & \vec{h}_g & b_g & d_1 & v_1 & b_{g+1} & d_2 & v_2 & b_{g+2} & fo \\
a_{g-1} & 0 & \vec{a} & 0 & 0 & \vec{0} & 0 & 0 & 0 & 0 & 0 & 0 & 0 & 0 \\
b_{g-1} & 0 & \vec{b} & 0 & 0 & \vec{0} & 0 & 0 & 0 & 0 & 0 & 0 & 0 & 0 \\
e_{g-1} & 2 & \vec{e} & 2 & 2 &   \vec{0} & 0 & 0 & 0 & 0 & 0 & 0 & 0 & 0 \\
r_4 & 0 & \vec{2} & 2 & 2&  \vec{0} & 0 & 0 & 0 & 0 & 0 & 0 & 0 & 0 \\
r_5 & 0 & \vec{0} & 2 & 2&  \vec{0} & 0 & 0 & 0 & 0 & 0 & 0 & 0 & 0 \\
r_6 & 0 & \vec{c}& 0 & 0 &  \vec{0} & 0 & 0 & 0 & 0 & 0 & 0 & 0 & 0 \\
\hline
a_g& 0 &\vec{0} & 0 & 0 &\vec{a}& 0 & 0 & 0 & 0 & 0 & 0 & 0 & 0 \\
b_g& 0 & \vec{0} & 0 & 0 &\vec{b}& 0 & 0 & 0 & 0 & 0 & 0 & 0 & 0 \\
s_g& 0 & \vec{0} & 0 & 2 &\vec{e}& 2& 1&1 & 0 & 0 & 0 & 0 & 0 \\
c_{g-1}& 4 & \vec{f} & 2 & 2 &\vec{c}& 0& 0&0 & 0 & 0 & 0 & 0 & 0 \\
c_{g}& 4 & \vec{f} & 2 & 4 &\vec{f}& 2& 2&0 & 2 & 2 & 0 & 2 & 1 \\
\partial_1& 2 & \vec{2} & 2 & 2 &\vec{2}& 2 & 1& 1 & 0 & 0 & 0 & 0 & 0 \\
\hline
u_1&  0 & \vec{0} & 0 & 0 & \vec{0} & 0& 1 & 1 & 0 & 0 & 0 & 0 & 0 \\
u_2&  0 &\vec{0} & 0&  0 &  \vec{0} & 0 & 0 & 0 & 0& 1 & 1 & 0 & 0  \\
x_1& 2 & \vec{2} & 2 & 2 &\vec{2}& 2 & 2& 0 & 2 & 1& 1 & 0 & 0 \\
x_2& 2 & \vec{2} & 2 & 2 &\vec{2}& 2 & 2& 0 & 2 & 2& 0 & 2 & 1 \\
\partial_2& 0 & \vec{0} & 0 & 0 & \vec{0} & 0& 1 & 1 & 2 & 1 & 1 & 0 & 0 \\
\partial_3& 0 & \vec{0} & 0 & 0 & \vec{0} & 0& 0 & 0 & 0 & 1 & 1 & 2 & 1 \\
\partial_4& 0 & \vec{0} & 0 & 0 & \vec{0} & 0& 0 & 0 & 0 & 0 & 0 & 0 & 1 \\
\end{array}\end{equation}

We are only interested in proving that the determinant of this matrix is $\pm 2^k$ for some $k$.  It is clear that if we delete the last row and the last column we will at worst change the sign of the determinant, so we do that to simplify the picture. We also  replace row $c_g$ by $c_g-x_2-a_{g-1}-a_g$ and $c_{g-1}$ by
$c_{g-1}-e_{g-1}+b_{g-1}-r_6$. Using the fact that $\vec{a}+\vec{2}=\vec{f}$, and
$\vec{f}=\vec{e}-\vec{b}+\vec{c}$ we get the following matrix, where we use the previous convention of renaming the row with its number on the page once it has been changed.

\begin{equation}\begin{array}{l|ccc|ccc|cccccc}\label{pmatr2}
\ & f_{g-1} & \vec{h}_{g-1} & b_{g-1} & f_g & \vec{h}_g & b_g & d_1 & v_1 & b_{g+1} & d_2 & v_2 & b_{g+2} \\
a_{g-1} & 0 & \vec{a} & 0 & 0 & \vec{0} & 0 & 0 & 0 & 0 & 0 & 0 & 0 \\
b_{g-1} & 0 & \vec{b} & 0 & 0 & \vec{0} & 0 & 0 & 0 & 0 & 0 & 0 & 0  \\
e_{g-1} & 2 & \vec{e} & 2 & 2 &   \vec{0} & 0 & 0 & 0 & 0 & 0 & 0 & 0 \\
r_4 & 0 & \vec{2} & 2 & 2&  \vec{0} & 0 & 0 & 0 & 0 & 0 & 0 & 0  \\
r_5 & 0 & \vec{0} & 2 & 2&  \vec{0} & 0 & 0 & 0 & 0 & 0 & 0 & 0  \\
r_6 & 0 & \vec{c}& 0 & 0 &  \vec{0} & 0 & 0 & 0 & 0 & 0 & 0 & 0  \\
\hline
a_g& 0 &\vec{0} & 0 & 0 &\vec{a}& 0 & 0 & 0 & 0 & 0 & 0 & 0  \\
b_g& 0 & \vec{0} & 0 & 0 &\vec{b}& 0 & 0 & 0 & 0 & 0 & 0 & 0  \\
s_g& 0 & \vec{0} & 0 & 2 &\vec{e}& 2& 1&1 & 0 & 0 & 0 & 0  \\
r_{10}& 2 & \vec{0} & 0 & 2 &\vec{c}& 0& 0&0 & 0 & 0 & 0 & 0  \\
r_{11}& 2 & \vec{0} & 0 & 2 &\vec{0}& 0& 0&0 & 0 & 0 & 0 & 0  \\
\partial_1& 2 & \vec{2} & 2 & 2 &\vec{2}& 2 & 1& 1 & 0 & 0 & 0 & 0  \\
\hline
u_1&  0 & \vec{0} & 0 & 0 & \vec{0} & 0& 1 & 1 & 0 & 0 & 0 & 0 \\
u_2&  0 &\vec{0} & 0&  0 &  \vec{0} & 0 & 0 & 0 & 0& 1 & 1 & 0   \\
x_1& 2 & \vec{2} & 2 & 2 &\vec{2}& 2 & 2& 0 & 2 & 1& 1 & 0 \\
x_2& 2 & \vec{2} & 2 & 2 &\vec{2}& 2 & 2& 0 & 2 & 2& 0 & 2  \\
\partial_2& 0 & \vec{0} & 0 & 0 & \vec{0} & 0& 1 & 1 & 2 & 1 & 1 & 0  \\
\partial_3& 0 & \vec{0} & 0 & 0 & \vec{0} & 0& 0 & 0 & 0 & 1 & 1 & 2  \\
 
\end{array}\end{equation}

Notice that $\vec{e}=\vec{2}+\vec{a}+\vec{b}-\vec{c}$  implies that $e_{g-1}-r_4-a_{g-1}-b_{g-1}$ is the vector that has a $2$ in the $f_{g-1}$ column and zeroes everywhere else.  We subtract this combination from rows $r_{10}$ and $r_{11}$, and also subtract this combination and row $r_4$ from row $\partial_1$ to get the following matrix.

\begin{equation}\begin{array}{l|ccc|ccc|cccccc}\label{pmatrix3}
\ & f_{g-1} & \vec{h}_{g-1} & b_{g-1} & f_g & \vec{h}_g & b_g & d_1 & v_1 & b_{g+1} & d_2 & v_2 & b_{g+2} \\
a_{g-1} & 0 & \vec{a} & 0 & 0 & \vec{0} & 0 & 0 & 0 & 0 & 0 & 0 & 0 \\
b_{g-1} & 0 & \vec{b} & 0 & 0 & \vec{0} & 0 & 0 & 0 & 0 & 0 & 0 & 0  \\
e_{g-1} & 2 & \vec{e} & 2 & 2 &   \vec{0} & 0 & 0 & 0 & 0 & 0 & 0 & 0 \\
r_4 & 0 & \vec{2} & 2 & 2&  \vec{0} & 0 & 0 & 0 & 0 & 0 & 0 & 0  \\
r_5 & 0 & \vec{0} & 2 & 2&  \vec{0} & 0 & 0 & 0 & 0 & 0 & 0 & 0  \\
r_6 & 0 & \vec{c}& 0 & 0 &  \vec{0} & 0 & 0 & 0 & 0 & 0 & 0 & 0  \\
\hline
a_g& 0 &\vec{0} & 0 & 0 &\vec{a}& 0 & 0 & 0 & 0 & 0 & 0 & 0  \\
b_g& 0 & \vec{0} & 0 & 0 &\vec{b}& 0 & 0 & 0 & 0 & 0 & 0 & 0  \\
s_g& 0 & \vec{0} & 0 & 2 &\vec{e}& 2& 1&1 & 0 & 0 & 0 & 0  \\
r_{10}& 0 & \vec{0} & 0 & 2 &\vec{c}& 0& 0&0 & 0 & 0 & 0 & 0  \\
r_{11}& 0& \vec{0} & 0 & 2 &\vec{0}& 0& 0&0 & 0 & 0 & 0 & 0  \\
r_{12}& 0 & \vec{0} & 0 & 0 &\vec{2}& 2 & 1& 1 & 0 & 0 & 0 & 0  \\
\hline
u_1&  0 & \vec{0} & 0 & 0 & \vec{0} & 0& 1 & 1 & 0 & 0 & 0 & 0 \\
u_2&  0 &\vec{0} & 0&  0 &  \vec{0} & 0 & 0 & 0 & 0& 1 & 1 & 0   \\
x_1& 2 & \vec{2} & 2 & 2 &\vec{2}& 2 & 2& 0 & 2 & 1& 1 & 0 \\
x_2& 2 & \vec{2} & 2 & 2 &\vec{2}& 2 & 2& 0 & 2 & 2& 0 & 2  \\
\partial_2& 0 & \vec{0} & 0 & 0 & \vec{0} & 0& 1 & 1 & 2 & 1 & 1 & 0  \\
\partial_3& 0 & \vec{0} & 0 & 0 & \vec{0} & 0& 0 & 0 & 0 & 1 & 1 & 2  \\

\end{array}\end{equation}

Notice that $r_{11}$ has a single nonzero entry in column $f_g$ so we can use it to clear the twos in this column from rows $e_{g-1}$, $r_4$ and $r_5$. Also we can use row $u_1$ to clear out the nonzero entries in rows $s_g$ and $r_{12}$  in the columns $d_1$ and $v_1$, so that the resulting matrix (\ref{pmatr4}) below is in block lower triangular form.

\begin{equation}\begin{array}{l|ccc|ccc|cccccc}\label{pmatr4}
\ & f_{g-1} & \vec{h}_{g-1} & b_{g-1} & f_g & \vec{h}_g & b_g & d_1 & v_1 & b_{g+1} & d_2 & v_2 & b_{g+2} \\
a_{g-1} & 0 & \vec{a} & 0 & 0 & \vec{0} & 0 & 0 & 0 & 0 & 0 & 0 & 0 \\
b_{g-1} & 0 & \vec{b} & 0 & 0 & \vec{0} & 0 & 0 & 0 & 0 & 0 & 0 & 0  \\
e_{g-1} & 2 & \vec{e} & 2 & 0&   \vec{0} & 0 & 0 & 0 & 0 & 0 & 0 & 0 \\
r_4 & 0 & \vec{2} & 2 & 0&  \vec{0} & 0 & 0 & 0 & 0 & 0 & 0 & 0  \\
r_5 & 0 & \vec{0} & 2 & 0&  \vec{0} & 0 & 0 & 0 & 0 & 0 & 0 & 0  \\
r_6 & 0 & \vec{c}& 0 & 0 &  \vec{0} & 0 & 0 & 0 & 0 & 0 & 0 & 0  \\
\hline
a_g& 0 &\vec{0} & 0 & 0 &\vec{a}& 0 & 0 & 0 & 0 & 0 & 0 & 0  \\
b_g& 0 & \vec{0} & 0 & 0 &\vec{b}& 0 & 0 & 0 & 0 & 0 & 0 & 0  \\
s_g& 0 & \vec{0} & 0 & 2 &\vec{e}& 2& 0&0 & 0 & 0 & 0 & 0  \\
r_{10}& 0 & \vec{0} & 0 & 2 &\vec{c}& 0& 0&0 & 0 & 0 & 0 & 0  \\
r_{11}& 0& \vec{0} & 0 & 2 &\vec{0}& 0& 0&0 & 0 & 0 & 0 & 0  \\
\partial_1& 0 & \vec{0} & 0 & 0 &\vec{2}& 2 & 0& 0 & 0 & 0 & 0 & 0  \\
\hline
u_1&  0 & \vec{0} & 0 & 0 & \vec{0} & 0& 1 & 1 & 0 & 0 & 0 & 0 \\
u_2&  0 &\vec{0} & 0&  0 &  \vec{0} & 0 & 0 & 0 & 0& 1 & 1 & 0   \\
x_1& 2 & \vec{2} & 2 & 2 &\vec{2}& 2 & 2& 0 & 2 & 1& 1 & 0 \\
x_2& 2 & \vec{2} & 2 & 2 &\vec{2}& 2 & 2& 0 & 2 & 2& 0 & 2  \\
\partial_2& 0 & \vec{0} & 0 & 0 & \vec{0} & 0& 1 & 1 & 2 & 1 & 1 & 0  \\
\partial_3& 0 & \vec{0} & 0 & 0 & \vec{0} & 0& 0 & 0 & 0 & 1 & 1 & 2  \\

\end{array}\end{equation}

This means that the determinant of the matrix is the product of the determinants of the blocks on the diagonal.  The blocks corresponding to the top $g-1$ handles have determinant $\pm2^k$ for some $k$. The determinant of the $g$-th block is $-2^4$. 

If the surface has $p$ punctures then the bottom diagonal block is a $3(p-2)\times 3(p-2)$ matrix consisting of vectors $u_i$, $x_i$ and $\partial_i$, which were described above, starting with their entries in column $d_1$.  We show this block for $p=5$, and it is simple to see the general pattern. 
\begin{equation}\begin{pmatrix} 1 & 1& 0 & 0 & 0 & 0 & 0 & 0 & 0 \\
                                0 & 0 & 0& 1 & 1& 0 & 0 & 0 & 0 \\
                                0 & 0 & 0 & 0 & 0 & 0& 1 & 1 & 0 \\
                                2 & 0 & 2 & 1 & 1 & 0 & 0 & 0 & 0\\
                                2 & 0 & 2 & 2 & 0 & 2 &1 & 1 & 0 \\
                                2 & 0 & 2 & 2 & 0 & 2 & 2 & 0 & 2  \\
                                1 & 1 & 2 & 1 & 1 & 0 & 0 & 0 & 0 \\
                                0 & 0 & 0 & 1 & 1 & 2 & 1 & 1 & 0 \\
                                0 & 0 & 0 &  0 & 0 & 0 &1& 1 & 2 \end{pmatrix} \end{equation}
It is an easy exercise to see that the determinant of this matrix is $\pm2^{2p-4}$. 

Hence the determinant of the whole matrix is equal $\pm2^K$ for some $K$, and is a unit $\mod{N}$ as desired.
We apply Theorem \ref{basecase} to finish the proof.
 \qed

\end{document}
